\theoremstyle{plain}
\newtheorem{theorem}{Theorem}
\newtheorem{lemma}[theorem]{Lemma}
\newtheorem{claim}[theorem]{Claim}
\newtheorem{corollary}[theorem]{Corollary}
\theoremstyle{remark} 
\newtheorem{remark}[theorem]{Remark}
\theoremstyle{definition} 
\newtheorem{question}[theorem]{Question}
\newtheorem{defn}[theorem]{Definition}
\newtheorem{statement}[theorem]{Statement}
\numberwithin{theorem}{section}
\DeclareMathOperator{\N}{\mathbb{N}}
\DeclareMathOperator{\ISigma}{I\Sigma}
\DeclareMathOperator{\RCA}{\mathsf{RCA}_0}
\DeclareMathOperator{\WKL}{\mathsf{WKL}_0}
\DeclareMathOperator{\ACA}{\mathsf{ACA}_0}
\DeclareMathOperator{\RT}{\protect\mathsf{RT}}
\DeclareMathOperator{\PRT}{\protect\mathsf{PRT}}
\newcommand{\paset}{P}
\newcommand{\zero}{\emptyset}
\newcommand{\cat}{^{\frown}}		
\newcommand{\upto}{\upharpoonright}	
\DeclareMathOperator{\dom}{dom}
\newcommand{\numhelpers}{\widehat{l}}	
\newcommand{\partitions}{S}
\DeclareMathOperator{\allones}{\overline{\mathbf{1}}}
\title[A packed Ramsey's theorem]{A packed Ramsey's theorem\\ and computability theory}
\author{Stephen Flood}
\date{\today}
\thanks{
	Partially supported by EMSW21-RTG 0353748, 0739007, and 0838506.  
	Much of this work appears in the author's Ph.D.\ thesis \cite{flood-thesis}, which was written at the University of Notre Dame under the direction of Peter Cholak.  
	Thanks also to Damir Dzhafarov for his helpful feedback on several drafts of this paper. 
	Special thanks to Wei Wang for his helpful suggestions on obtaining $low_2$ solutions to the stable form of packed Ramsey's theorem for pairs \cite{wang-egt}.
}
\subjclass[2010]{Primary 03D80}
\keywords{Ramsey's theorem, computability theory, reverse mathematics}
\begin{document}

\begin{abstract}
	Ramsey's theorem states that each coloring has an infinite homogeneous set, but these sets can be arbitrarily spread out.  Paul Erd\H os and Fred Galvin proved that for each coloring $f$, there is an infinite set that is ``packed together'' which is given ``a small number'' of colors by $f$.  
\par
	We analyze the strength of this theorem from the perspective of computability theory and reverse mathematics.  
	We show that this theorem is close in computational strength to standard Ramsey's theorem by giving arithmetical upper and lower bounds for solutions to computable instances. 
	In reverse mathematics, we show that that this packed Ramsey's theorem is equivalent to Ramsey's theorem for exponents $n\neq 2$.  When $n=2$, we show that it implies Ramsey's theorem, and that it does not imply $\ACA$.  
\end{abstract}

\maketitle

\section{Introduction}
\label{sect.Intro}

We begin with the standard definitions of Ramsey theory.
Fix any $X\subseteq\N$ and $n,k\in\N$.
We write $[X]^n$ to refer the set of $n$-element subsets of $X$.  That is, $[X]^n = \{Z\subseteq X : |Z|=n\}$.  
Given a coloring $f:[X]^n\rightarrow\{1,\dots,k\}$, we say $H$ is homogeneous for $f$ if $f$ assigns a single color to $[H]^n$.
Finite Ramsey's theorem says that for any $n,k,m\in\N$, there is some $w\in\N$ such that each coloring $f:[\{1,\dots,w\}]^n\rightarrow\{1,\dots,k\}$ has a size $m$ homogeneous set.
Similarly, infinite Ramsey's theorem says that for any $n,k\in\N$ and any coloring $f:[\N]^n\rightarrow\{1,\dots,k\}$, there is an infinite set $H\subseteq\N$ which is homogeneous for $f$.  

We will use the standard arrow notation for combinatorial bounds in finite Ramsey theory.  
In other words, given $w,m,n,k$ we write \emph{$w\rightarrow(m)^n_k$} to say that for each $X\subseteq\N$ with $|X|=w$ and for each coloring $f:[X]^n\rightarrow\{1,\dots,k\}$, there is a homogeneous $H\subseteq X$ with $|H|=m$.
In this notation, finite Ramsey's theorem says that for any $n,k,m\in\N$, there is a $w\in\N$ such that\ $w\rightarrow(m)^n_k$.

In arrow notation, infinite Ramsey's theorem simply asserts that $\N\rightarrow(\N)^n_k$ for each $n,k\in\N$.  
The infinite form of Ramsey's theorem has been studied extensively in computability theory and reverse mathematics, as in \cite{CJS,J72,liu}.
In the formal context of second-order arithmetic, we write
$\RT^n_k$ for the following sentence: 
$$\text{``}(\forall f:[\N]^n\rightarrow\{1,\dots,k\})(\exists H\subseteq\N)[H\text{ is infinite and is homogeneous for }f].\text{''}$$

Infinite Ramsey's theorem says that infinite homogeneous sets always exist, but infinite sets can be arbitrarily spread out.  
In \cite{ramseytype}, Erd\H os and Galvin use a function $\phi$ to describe how packed an infinite set is.  
\begin{defn}
Fix some $\phi:\N\rightarrow\N$. 
We say that $A\subseteq\N$ is \emph{packed for $\phi$} if $|A\cap\{1,\dots,w\}|\geq\phi(w)$ for infinitely many $w$.  
We say that $A\subseteq\N$ is \emph{sparse for $\phi$} if it is not packed for $\phi$.
\end{defn}

\begin{figure}[bth]
	\centering
	\includegraphics{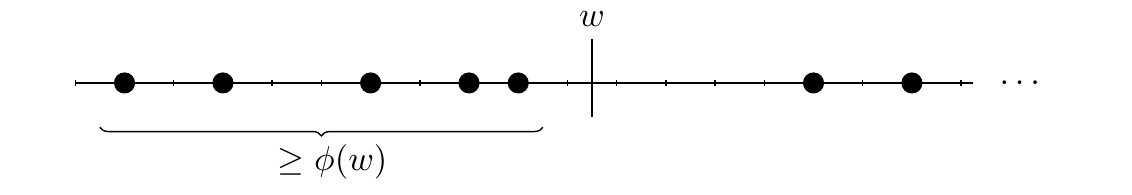}
	\caption{$A$ is packed for $\phi$ if $|A\cap\{1,\dots,w\}|\geq\phi(w)$ infinitely often.}
\end{figure}

This notion is interesting only when $\phi$ has $\liminf_w\phi(w)=\infty$, because otherwise any large enough finite set is packed for $\phi$.  
Unfortunately, Erd\H os and Galvin showed that each interesting $\phi$ has a coloring $f$ s.t.\ no homogeneous set is packed for $\phi$.

This is a consequence of the following, which is essentially Theorem 2.3 of \cite{ramseytype}.
\begin{theorem}[Erd\H os and Galvin \cite{ramseytype}]
\label{thm.EG.sharpcolors}
	Fix any $\phi:\mathbb{N}\rightarrow\mathbb{N}$ with $\liminf_w\phi(w)=\infty$ and any $n\in\mathbb{N}$, $n\geq 1$.  
There is a coloring $g:[\mathbb{N}]^n\rightarrow 2^{n-1}$ such that for any set $A$, either $A$ is sparse for $\phi$ or $A$ is given all $2^{n-1}$ colors by $g$.
\end{theorem}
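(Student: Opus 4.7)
The plan is to build $g$ from a partition of $\mathbb{N}$ into consecutive finite blocks whose growth rate is tuned to $\phi$, and then color each $n$-tuple by the length-$(n-1)$ binary string recording, for each consecutive gap, whether the two endpoints sit in the same block. The case $n=1$ is trivial (there is only $2^0 = 1$ color), so I would assume $n \geq 2$.

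Preliminary reduction: replace $\phi$ by its monotone envelope $h(w) = \min_{v \geq w} \phi(v)$, which is nondecreasing, tends to infinity, and satisfies $h \leq \phi$. Since any $A$ packed for $\phi$ is also packed for $h$, a coloring that works with $h$ in place of $\phi$ also works for $\phi$. Using $h(w) \to \infty$, inductively pick $1 = a_0 < a_1 < a_2 < \cdots$ with $h(a_k) > n(k+1)$ for every $k$, and let $B_k = \{a_k, a_k+1, \ldots, a_{k+1}-1\}$. Then define
\[
  g(\{x_1 < \cdots < x_n\}) = (b_1, \ldots, b_{n-1}) \in 2^{n-1},
\]
where $b_i = 0$ if $x_i, x_{i+1}$ lie in the same block $B_k$ and $b_i = 1$ otherwise.

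The key lemma is: \emph{if $A$ is packed for $h$, then infinitely many blocks $B_k$ satisfy $|A \cap B_k| \geq n$.} Otherwise there would be a $K$ with $|A \cap B_k| \leq n - 1$ for all $k \geq K$, and then for $w \in B_k$ with $k \geq K$,
\[
  |A \cap \{1, \ldots, w\}| \leq |A \cap \{1, \ldots, a_K - 1\}| + (n-1)(k - K + 1),
\]
which for large $k$ is strictly less than $n(k+1) < h(a_k) \leq h(w)$, contradicting packedness for $h$. Given the lemma, fix any target color $c = (b_1, \ldots, b_{n-1})$ and write it uniquely as $0^{r_0}\,1\,0^{r_1}\,1\cdots 1\,0^{r_k}$, where $k$ is the number of $1$-bits and each $r_i + 1 \leq n$. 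Pick distinct blocks $B_{j_0} < B_{j_1} < \cdots < B_{j_k}$ each meeting $A$ in at least $n$ elements, and extract $r_i + 1$ elements of $A$ from $B_{j_i}$; concatenating gives an $n$-subset of $A$ whose $g$-color is exactly $c$, since the $r_i$ in-block gaps contribute the runs of $0$'s and the $k$ across-block gaps contribute the $1$'s.

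The main obstacle is calibrating the block lengths so that the linear count $(n-1)(k+1)$ is actually dominated by $h$ throughout $B_k$; the choice $h(a_k) > n(k+1)$ is exactly what makes the key lemma go through. After that, everything reduces to the routine observation that any binary string is determined by its maximal runs of $0$'s, so realizing an arbitrary color is just a matter of distributing elements into enough "fat" blocks.
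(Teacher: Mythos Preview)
Your proof is correct and follows essentially the same line as the paper's Lemma~\ref{lemma.PRTn-sharp} (the paper's adaptation of the Erd\H{o}s--Galvin argument): partition $\mathbb{N}$ into consecutive blocks growing just fast enough that any packed set must meet infinitely many blocks in at least $n$ points, then color an $n$-tuple by how it is distributed among the blocks. The only cosmetic differences are that you encode the partition type as a length-$(n-1)$ ``same block / different block'' binary string rather than as a composition $(r_1,\dots,r_l)$ of $n$, and you handle a general $\phi$ via the monotone envelope $h(w)=\min_{v\ge w}\phi(v)$ whereas the paper's lemma simply assumes $\phi$ is an order function from the outset.
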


Motivated by this, \cite{ramseytype} considers the following weakening of homogeneity.
\begin{defn}
Fix $n\in\N$.  A set $A$ is \emph{semi-homogeneous} for a coloring $f:[\N]^n\rightarrow\{1,\dots,k\}$ if $A$ is given at most $2^{n-1}$ colors by $f$.  That is, $A$ is semi-homogeneous for $f$ if $|\{f(Z) : Z\in[A]^n\}|\leq 2^{n-1}$.
\end{defn}

Using this weakening of homogeneity, Erd\H os and Galvin prove the following 
	variant of infinite Ramsey's theorem, which has a finite-Ramsey flavor.
\begin{theorem}[Erd\H os and Galvin \cite{ramseytype}]
\label{thm.PRTnk}
Fix $n,k\in\N$, and any $\phi:\N\rightarrow\N$ such that $w\rightarrow\big(\phi(w)\big)^{n}_{k+1}$ for all big enough $w$. 
For any $f:[\N]^n\rightarrow\{1,\dots,k\}$, there is a set $A$ which is packed for $\phi$ and semi-homogeneous for $f$.
\end{theorem}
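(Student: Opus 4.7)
The plan is to build $A$ as a nested union of finite sets produced by iterated applications of the finite Ramsey hypothesis. Concretely, I would choose a rapidly increasing sequence $w_1 < w_2 < \dots$ for which the arrow relation $w_s\rightarrow(\phi(w_s))^n_{k+1}$ holds, and then construct a chain $A_1 \subseteq A_2 \subseteq \dots$ with $A_s \subseteq \{1,\dots,w_s\}$ and $|A_s| \geq \phi(w_s)$, maintaining the invariant that $A_s$ is semi-homogeneous for $f$. The final set $A = \bigcup_s A_s$ is then automatically packed for $\phi$, since $|A \cap \{1,\dots,w_s\}| \geq |A_s| \geq \phi(w_s)$ at each of the infinitely many witnesses $w_s$.

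At stage $s$, I would design an auxiliary coloring $\hat{f}:[\{1,\dots,w_s\}]^n \to \{1,\dots,k+1\}$ that agrees with $f$ on tuples whose inclusion is compatible with the existing $A_{s-1}$, and that assigns the extra $(k+1)$-st color to those tuples whose $f$-color would introduce an excess new color into the palette already used on $A_{s-1}$. Applying the Ramsey hypothesis to $\hat{f}$ produces an $\hat{f}$-homogeneous $H_s \subseteq \{1,\dots,w_s\}$ of size $\phi(w_s)$. One then argues that $H_s$ cannot receive the extra ``overflow'' color (else one could extract from $H_s$ a configuration that contradicts the combinatorial bookkeeping of the palette), so $H_s$ must be $f$-monochromatic in one of the original $k$ colors. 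Setting $A_s := A_{s-1} \cup H_s$ (or a suitable thinning that re-aligns with $A_{s-1}$) preserves both the size lower bound and the semi-homogeneity invariant.

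The main obstacle is controlling the color count $|f([A_s]^n)|$ across infinitely many stages, so that it never exceeds $2^{n-1}$. Naively each extension step could introduce a new color, and since we perform infinitely many steps, we cannot afford to do so every time. The delicate part of the argument is to apply a pigeonhole on the finite collection of possible $f$-palettes of size at most $2^{n-1}$: among the candidate extensions offered by the finite Ramsey step across $s$, infinitely many must reuse the same palette, and restricting to this cofinal subsequence stabilizes the color count. The role of the extra $(k+1)$-st slot in the Ramsey hypothesis is precisely to absorb this ``new color'' possibility in the auxiliary coloring---without it, one could not simultaneously guarantee a monochromatic extension and the size bound $\phi(w_s)$, and the semi-homogeneity invariant would break down. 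Verifying that this staged construction, with careful tracking of the palette, really stays within $2^{n-1}$ colors while hitting the packing threshold at each $w_s$ is the crux of the proof.
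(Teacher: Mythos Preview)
Your proposal has a genuine gap: the nested-chain architecture cannot be made to work as written, and the two places where you wave your hands are exactly where it breaks.

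First, the step ``one then argues that $H_s$ cannot receive the extra overflow color'' is unsupported. In the paper's uses of the auxiliary $(k{+}1)$-coloring (e.g.\ in the proof that $[\N]^{n-1}$ is large), the reason the homogeneous set avoids color $k{+}1$ is that one can exhibit, inside any sufficiently large set, a single $n$-tuple that is forced to carry an original color; homogeneity then propagates. Your auxiliary coloring instead assigns $k{+}1$ to tuples whose $f$-color lies outside the current palette $P_{s-1}$, and nothing prevents \emph{every} $n$-tuple of $H_s$ from having $f$-color outside $P_{s-1}$. So $H_s$ may well be $\hat f$-homogeneous with color $k{+}1$.

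Second, and more seriously, even if each $H_s$ were $f$-monochromatic, setting $A_s:=A_{s-1}\cup H_s$ gives no control over the colors of the \emph{cross} $n$-tuples meeting both $A_{s-1}$ and $H_s\setminus A_{s-1}$. The finite Ramsey step produces $H_s$ with no relation to $A_{s-1}$; you cannot force $A_{s-1}\subseteq H_s$, and your auxiliary coloring only constrains $[H_s]^n$, not $[A_{s-1}\cup H_s]^n$. Your proposed fix, a pigeonhole on the finitely many possible palettes, does not help: two sets with the same palette $P$ can have a union whose palette strictly contains $P$, so passing to a cofinal subsequence of stages does not stabilize the color count.

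The paper's proof is organized quite differently. It builds an increasing sequence of \emph{disjoint} blocks $Y_1<Y_2<\cdots$ (not a nested chain), each $f$-homogeneous, and arranges that the $f$-color of any $Z\in[\bigcup_i Y_i]^n$ depends only on its \emph{partition type} $(r_1,\dots,r_l)$, i.e.\ on how $Z$ is split among the blocks. Since there are exactly $2^{n-1}$ compositions of $n$, this yields semi-homogeneity. Achieving this requires choosing each new block inside a ``large'' set determined by helper colorings $f_{r_1,\dots,r_l}$ (one per partition type with $l\ge 2$), which record in advance the color that cross-tuples of each type will receive; these helper colorings are obtained as paths through certain trees, and the $(k{+}1)$-st color in the hypothesis is used to show the base large set is nonempty. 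The cross-color problem that sinks your approach is precisely what this helper-coloring machinery is designed to solve.
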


Our goal in this paper is to study the strength of Theorem \ref{thm.PRTnk} from the perspective of computability theory and reverse mathematics.  
More precisely, we will study the computational strength required to produce packed semi-homogenous sets for any given computable $f$ and $\phi$.

We will classify computational strength in several ways.
First, recall that a formula $\theta$ is arithmetical if it has only number quantifiers.  
If $\theta$ is arithmetical with $n$-many alternations of $\forall$ and $\exists$, 
	recall that $\theta$ is $\Sigma^0_{n}$ if the outermost quantifier is $\exists$, 
	and that $\theta$ is $\Pi^0_n$ if the outermost quantifier is $\forall$.  
A set is $\Sigma^0_n$ ($\Pi^0_n$) if it has a $\Sigma^0_n$ ($\Pi^0_n$) definition,
	and it is $\Delta^0_n$ if it is both $\Sigma^0_n$ and $\Pi^0_n$.
Second, recall that $X$ is $low$ if $X'\equiv_T \emptyset'$ and that $X$ is $low_n$ if $X^{(n)}\equiv_T \emptyset^{(n)}$.
We will generally assume the reader is familiar with the basic definitions and results of computability theory.  
For a good introduction, see Part A of \cite{soare}.

Because of the close connections between computability and reverse mathematics, we will often discuss reverse mathematical corollaries and connections.
The basic idea of reverse math is to code theorems of mathematics inside second order arithmetic, and to compare their relative strength over the base system of $\RCA$. 
Intuitively, $T_1$ implies $T_2$ over $\RCA$ if you can prove $T_2$ using only $T_1$, computable constructions, and computable verifications.  
More formally, we say that $T_1$ implies $T_2$ over $\RCA$ if there is a proof of $T_2$ using only $T_1$, comprehension for $\Delta^0_1$ sets, induction for $\Sigma^0_1$ sets, and the axioms for ordered semi-rings.

We assume that the reader interested 
	in the details of the reverse mathematics results and arguments 
	is familiar with the area's standard definitions and techniques.
For a detailed and formal introduction to reverse mathematics, see Chapter 1 of \cite{simpson}. 
For a brief survey of reverse mathematics in Ramsey theory 
	and for more complete arguments see \cite{combprinciples}.
We begin with the usual notation.  

\begin{statement}
$\PRT^n_k$ is the assertion that
\begin{multline*}
	\text{``}\big(\forall \phi:\N\rightarrow\N\ s.t.\ (\forall w)[w\rightarrow(\phi(w))^n_{k+1}]\big)\big(\forall f:[\N]^n\rightarrow\{1,\dots,k\}\big)\\
	\big(\exists H\subseteq\N\big)\big[H\text{ is packed for }\phi\text{ and semi-homogeneous for }f\big].\text{''}
\end{multline*}
\end{statement}

In second-order arithmetic (when we are working over $\RCA$), $\PRT^n_k$ will refer to this $\Pi^1_2$ formula.  
When we are not working over $\RCA$, we will sometimes abuse this notation and write $\PRT^n_k$ to refer to Theorem \ref{thm.PRTnk} itself.

Because $\PRT^n_k$ is trivial when $\liminf_w\phi(w)<\infty$, 
	our proofs of $\PRT^n_k$ will always assume that $\liminf_w\phi(w)=\infty$.
In this case, any set $A$ which is packed for $\phi$ is automatically infinite.

\subsection{Outline}
We begin in Section \ref{sect.PRT-idea} with the basic proof idea for $\PRT^n_k$, 
	and with a proof that $\RT^1_k$ implies $\PRT^1_k$ over $\RCA$. 
In Section \ref{sect.PRT2}, we prove $\PRT^2_k$ using paths through a $\Pi^0_2$ definable tree.
In Section \ref{sect.PRT2.low2}, we adapt this proof to produce $low_2$ solutions to computable instances of $\PRT^2_k$.
In Section \ref{sect.tools-PRTn} we present the combinatorial tools which we use to prove $\PRT^n_k$, and in Section \ref{sect.PRTn} we prove $\PRT^n_k$ using paths through a $\Pi^0_n$ definable tree.

In Section \ref{sect.reversals}, we give lower bounds for the complexity of $\PRT^n_k$.  We begin by using $\PRT^n_{2^{n-1}-1+k}$ to prove to $\RT^n_k$ over $\RCA$.
Adapting this argument, we show that there is a computable instance of $\PRT^n_{2^{n-1}+1}$ that has no $\Sigma^0_n$ solution.

We summarize our results concerning the strength of $\PRT^n_k$ in Section \ref{sect.summary}.
Finally, we discuss several open questions about the strength of $\PRT^n_k$ in Section \ref{sect.questions}.

\subsection{Building packed semi-homgeneous sets}
\label{sect.PRT-idea}

Our proofs of $\PRT^n_k$ share a common method, 
	and the intuition from the early proofs is helpful in the later proofs.  

\begin{defn}
Suppose we have fixed $f$ and $\phi$ as in $\PRT^n_k$ for some $n,k$.
A finite set $Y\subset\N$ is a \emph{block} if it is $f$-homogeneous and there is $w\in\N$ such that $Y\subseteq\{1,\dots,w\}$ and $|Y|\geq\phi(w)$.
We say that a sequence of blocks $\{Y_i\}_{i\in I}$ is an \emph{increasing sequence of blocks} if $\max(Y_i)<\min(Y_{i+1})$ for each $i$.
\end{defn}
The main idea in each of these proofs is (1) to {explicitly} define helper colorings as paths through certain trees, (2) to use these colorings to define an increasing sequence of blocks, and (3) to refine this 
sequence to obtain the desired set.
For comparison, Erd\H os and Galvin define similar helper colorings using multiple ultrafilters.

We begin by showing that computable colorings of singletons have computable packed homogeneous sets.
In this proof, no tree is needed: we simply select and refine a sequence of blocks.
We will use the following claim to obtain this sequence of finite sets.

\begin{claim}[$\RCA$]\label{claim.PRT1.w} 
Suppose $\phi: \mathbb{N}\rightarrow\mathbb{N}$ satisfies $w\rightarrow(\phi(w))^1_{k+1}$ for all $w$.  
Then $(\forall m)(\exists w>m)[w-m\rightarrow (\phi(w))^1_k]$. 
\end{claim}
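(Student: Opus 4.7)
The plan is to exploit the extra color provided by the hypothesis $w\rightarrow(\phi(w))^1_{k+1}$ by \emph{burning} it on the initial segment $\{1,\dots,m\}$. This way any resulting homogeneous set is forced into $\{m+1,\dots,w\}$ and uses only one of the original $k$ colors.

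Concretely, given $m$, I would first use the standing assumption $\liminf_w\phi(w)=\infty$ to pick some $w>m$ with $\phi(w)>m$. I claim $w-m\rightarrow(\phi(w))^1_k$ for this $w$: given any $k$-coloring $g\colon\{m+1,\dots,w\}\to\{1,\dots,k\}$, extend it to a $(k+1)$-coloring $\widetilde g$ of $\{1,\dots,w\}$ by setting $\widetilde g(x)=k+1$ for all $x\leq m$ and $\widetilde g(x)=g(x)$ for $x>m$. The hypothesis applied to $\widetilde g$ produces a $\widetilde g$-monochromatic set $H$ with $|H|=\phi(w)>m$. Since color $k+1$ is assigned to only $m$ elements, $H$ cannot be $\widetilde g$-colored by $k+1$, so $H\subseteq\{m+1,\dots,w\}$ and $H$ is monochromatic under $g$.

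The only subtlety is producing the $w$. If $\phi$ happens to be eventually bounded by some $B$, then no $w>m$ with $\phi(w)>m$ need exist once $m\geq B$; in that case one instead chooses $w$ large enough that $\lceil(w-m)/k\rceil\geq B\geq\phi(w)$ and finishes by direct pigeonhole on any $k$-coloring of $\{m+1,\dots,w\}$. Since the main proofs of the paper assume $\liminf_w\phi(w)=\infty$, only the first case will actually be invoked. Either way, everything is finitary once $w$ is fixed, so the argument carries out in $\RCA$ without any induction or comprehension issues.
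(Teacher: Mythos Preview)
Your argument is correct and is essentially the same as the paper's: extend the given $k$-coloring to a $(k+1)$-coloring by assigning the extra color to the $m$ ``leftover'' points, apply the hypothesis $w\rightarrow(\phi(w))^1_{k+1}$, and observe that the resulting homogeneous set of size $\phi(w)>m$ cannot carry the extra color. Your treatment is in fact slightly more careful than the paper's, which simply writes ``take $w$ large enough so that $\phi(w)>m$'' without comment; your explicit invocation of $\liminf_w\phi(w)=\infty$ (and your pigeonhole fallback for the bounded case) fills that small gap.
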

\begin{proof} 

Given $m$, take $w$ large enough so that $\phi(w)>m$.  
Fix any $A\subset\N$ with $|A|=w-m$, and any coloring $f:A\rightarrow\{1,\dots,k\}$. 
We must obtain a $\phi(w)$-element homogeneous set.
First, select any $X\subseteq\N$ of size $w$ such that\ $A\subset X$. 
Next, define  $\hat{f}:X\rightarrow\{1,\dots,k,k+1\}$ by setting $\hat{f}(x)=f(x)$ if $x\in A$, and setting $\hat{f}(x)=k+1$ if $x\notin A$.

Let $Y$ be a $\hat{f}$-homogeneous subset of $X$ of size $\phi(w)$.  
Then $Y$ is $\hat{f}$ homogeneous with color $c\in\{1,\dots,k\}$ since the color $k+1$ was assigned to $m<\phi(w)$ numbers. 
It follows that $Y\subseteq A$ is the desired $f$ homogeneous set of size $\phi(w)$. 
\end{proof}

\begin{theorem}[$\RCA$]
\label{thm.RT1k->PRT1k} 
For each $k\in\N$, $\RT^1_k$ implies $\PRT^1_k$ 
\end{theorem}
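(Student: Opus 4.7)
The plan is to use Claim \ref{claim.PRT1.w} iteratively to construct an increasing sequence of blocks, and then to apply $\RT^1_k$ to extract an infinite subsequence whose blocks share a single color. Note that when $n=1$ we have $2^{n-1}=1$, so semi-homogeneous coincides with homogeneous; thus $\PRT^1_k$ asks for a packed $f$-homogeneous set given a $k$-coloring $f$ of $\N$ and a suitable $\phi$.

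First I would construct the block sequence by primitive recursion. Set $m_0=0$; given $m_i$, use Claim \ref{claim.PRT1.w} to find the least $w_i>m_i$ with $w_i-m_i\rightarrow(\phi(w_i))^1_k$. Applying the arrow relation to $f\upto\{m_i+1,\dots,w_i\}$ yields a homogeneous $Y_i\subseteq\{m_i+1,\dots,w_i\}$ with $|Y_i|\geq\phi(w_i)$; pick the least such $Y_i$ under an effective coding of finite sets, let $c_i$ be its $f$-color, and set $m_{i+1}=w_i$. Each $Y_i\subseteq\{1,\dots,w_i\}$ has size at least $\phi(w_i)$, so it is a block, and $\max(Y_i)\leq w_i=m_{i+1}<\min(Y_{i+1})$, so $\{Y_i\}_{i\in\N}$ is increasing. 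Since every step uses only bounded search, the sequence $\{(Y_i,c_i)\}_{i\in\N}$ exists in $\RCA$.

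Now I would apply $\RT^1_k$ to the coloring $g:i\mapsto c_i$ to obtain an infinite $I\subseteq\N$ on which $g$ has constant value $c^*$, and take $H=\bigcup_{i\in I}Y_i$, which is $\Delta^0_1$-definable from $I$ together with the block sequence and therefore exists in $\RCA$. For each $i\in I$, $|H\cap\{1,\dots,w_i\}|\geq|Y_i|\geq\phi(w_i)$, and since $I$ is infinite this happens for infinitely many $w$, so $H$ is packed for $\phi$. Every $Y_i$ with $i\in I$ is $f$-homogeneous with color $c^*$, so $H$ itself is $f$-homogeneous and hence semi-homogeneous. I do not foresee a substantial obstacle: the only point requiring care is formalizing the recursion building $\{(Y_i,c_i)\}$ inside $\RCA$, but since each step is a bounded existential (the witness $w_i$ supplied by Claim \ref{claim.PRT1.w} and the homogeneous block $Y_i$ supplied by the arrow relation), this is a routine $\Sigma^0_1$-recursion available in the base system.
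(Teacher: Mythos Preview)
Your proposal is correct and follows essentially the same argument as the paper: build an increasing sequence of blocks $Y_i\subseteq(w_i,w_{i+1}]$ via Claim~\ref{claim.PRT1.w}, apply $\RT^1_k$ to the induced coloring $i\mapsto c_i$, and take the union of the selected blocks. One minor terminological quibble: the search for $w_i$ is not literally bounded---it is an unbounded search guaranteed to halt by Claim~\ref{claim.PRT1.w}---so the recursion is justified by iterating a total $\Delta^0_1$ function (as the paper phrases it) rather than by ``bounded search''; your final sentence calling it a $\Sigma^0_1$-recursion is the correct formulation.
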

\begin{proof}
Fix $f:[\N]^1\rightarrow\{1,\dots,k\}$ and $\phi$ as in $\PRT^1_k$.  
We produce a set $A$ which is packed for $\phi$ and semi-homogeneous for $f$.  
Because $n=1$, `semi-homogeneous' means `homogeneous.'

Inductively define an increasing sequence $w_0<w_1<\dots$ by setting 
$w_0=1$ and $w_{i+1}$ to be the least $w>w_{i}$ such that $w-w_{i}\rightarrow (\phi(w))^1_k$.
By Claim \ref{claim.PRT1.w}, $w_{i+1}$ exists whenever $w_i$ exists. 
Notice that $w_i$ is defined by iterating a total $\Delta^0_1$ function $i$ many times.  
It follows that $i\mapsto w_i$ is total by $\Sigma^0_1$ induction (and Proposition 6.5 of \cite{combprinciples}).  
Furthermore, $\{w_i:i\in\N\}$ is unbounded by $\Sigma^0_1$ induction.  

For each $i$, let $Y_i\subseteq (w_i,w_{i+1}]$ be the $f$-homogeneous subset of size $\phi(w_{i+1})$ with least index as a finite set.  
For each $i$, $Y_i$ exists because $w_{i+1}-w_i\rightarrow (\phi(w_{i+1}))^1_k$.  
This sequence has a $\Delta^0_1$ definition.

The sequence $\{Y_i\}$ induces a coloring $g:\mathbb{N}\rightarrow \{1,\dots,k\}$ such that $g(i)$ is the color given to any/all $x\in Y_i$ by $f$.  Then $g$ is $\Delta^0_1$ because $f$ and the $Y_i$ are both $\Delta^0_1$, and $g$ is well defined because each $Y_i$ is $f$-homogeneous. 
Therefore, $\Delta^0_1$ comprehension proves that $g$ exists.
By $\RT^1_k$, there is a $c\in\{1,\dots,k\}$ and an infinite $H\subseteq\mathbb{N}$ such that $H$ is $g$-homogeneous with color $c$.  

Let $A=\bigcup_{i\in H} Y_i$.  
Clearly, $A$ is $f$-homogeneous.  
Furthermore, $Y_i\subset A$ for all $i\in H$, hence $|A\cap\{1,\dots,w_i\}| \geq \phi(w_i)$ for each $i\in H$.
Because $H$ is infinite, 
	we have that $A$ is packed for $\phi$ and homogeneous for $f$.  
\end{proof}

\begin{corollary}
\label{cor.PRT1.comp}
For each computable $f:\N\rightarrow\{1,\dots,k\}$ and each computable $\phi:\N\rightarrow\N$ such that $\phi(w)\leq \lceil \frac{w}{k+1}\rceil$ for all $w$, 
there is a computable set $A$ which is packed for $\phi$ and homogeneous for $f$.
\end{corollary}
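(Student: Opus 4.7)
The plan is to deduce this corollary by effectivizing the proof of Theorem \ref{thm.RT1k->PRT1k}. First, I would verify that the hypothesis $\phi(w)\leq\lceil w/(k+1)\rceil$ entails the combinatorial condition $w\rightarrow(\phi(w))^1_{k+1}$ required to apply $\PRT^1_k$. This is immediate from pigeonhole: any $(k+1)$-coloring of a $w$-element set must assign some color to at least $\lceil w/(k+1)\rceil\geq\phi(w)$ elements. Hence the computable $f$ and $\phi$ form a legitimate instance of $\PRT^1_k$.

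Next, I would trace through the construction in the proof of Theorem \ref{thm.RT1k->PRT1k} and observe that, given computable $f$ and $\phi$, every step before the invocation of $\RT^1_k$ is uniformly computable. The sequence $w_0<w_1<\dots$ is obtained by iterating a computable function, since the least $w>w_i$ witnessing Claim \ref{claim.PRT1.w} can be located by bounded search (the pigeonhole estimate $\lceil (w-w_i)/k\rceil\geq\phi(w)$, which holds once $w\geq(k+1)w_i$, gives an a priori bound on the search). Each block $Y_i$ is then found by a bounded search over finite subsets of $(w_i,w_{i+1}]$, and the induced $k$-coloring $g:\N\to\{1,\dots,k\}$ is computable from $f$ together with the sequence $\{Y_i\}$.

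The only step that is not outright computable is the application of $\RT^1_k$ to obtain a $g$-homogeneous infinite set $H$, but this is easily handled: since $g$ partitions $\N$ into at most $k$ computable color classes $g^{-1}(c)$, at least one such class is infinite, and selecting such a $c$ non-uniformly gives an infinite $g$-homogeneous $H$ that is itself computable. The set $A=\bigcup_{i\in H}Y_i$ is then computable (one can decide $x\in A$ by locating the unique $i$ with $x\in(w_i,w_{i+1}]$, checking $i\in H$, and then checking $x\in Y_i$), $f$-homogeneous, and packed for $\phi$. The principal "obstacle" is really just a feature of the argument: the non-uniform selection of $c$ means the corollary yields existence of a computable solution rather than a uniform procedure in $f$ and $\phi$; no genuine technical difficulty arises.
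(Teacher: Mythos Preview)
Your proposal is correct and follows essentially the same approach as the paper: both arguments effectivize the construction in Theorem~\ref{thm.RT1k->PRT1k}, observing that the sequence $\{Y_i\}$ and the induced coloring $g$ are computable, and that a non-uniform choice of the color $c$ yields a computable $g$-homogeneous $H$ and hence a computable $A$. You add the explicit verification that $\phi(w)\leq\lceil w/(k+1)\rceil$ implies $w\rightarrow(\phi(w))^1_{k+1}$ and an a priori termination bound for the search, details the paper leaves implicit, but the substance is the same.
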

\begin{proof}
Suppose that $f$ and $\phi$ in $\PRT^1_k$ are computable.
Then the sequence $\{Y_i\}$ defined in Theorem \ref{thm.RT1k->PRT1k} is uniformly computable, 
	and the infinite homogeneous set $H$ is computable given the finite parameter $c$.  
Thus $A$ is computable, as desired.  
\end{proof}

\section{A tree proof of $\PRT^2_k$}
\label{sect.PRT2}

\begin{defn}
Given $n,k\in\N$, a \emph{computable instance of $\PRT^n_k$} is a computable coloring $f:[\N]^n\rightarrow\{1,\dots,k\}$ and a computable $\phi:\N\rightarrow\N$ such that\ $w\rightarrow(\phi(w))^n_{k+1}$ for all $w$ and such that $\liminf_w\phi(w)=\infty$.
\end{defn}

The purpose of this section is to show that for each computable instance $f,\phi$ of $\PRT^2_k$,
	there is an infinite $\Pi^0_2$ tree s.t.\ 
	any path computes a set which is packed for $\phi$ and semi-homogeneous for $f$.  
We begin by fixing our notation for trees.

\begin{defn} 
Let \emph{$k^{<\N}$} denote the set of all functions $\tau$ such that
$\tau:\{1,\dots,w\}\rightarrow \{1,\dots,k\}$ for some $w\in\N$.
If $\dom(\tau)=\{1,\dots,w\}$, we will call $w=|\tau|$ the \emph{length} of $\tau$.
Given $\tau,\rho\in k^{<\N}$, we say that \emph{$\tau\preceq\rho$} if and only if $|\tau|\leq |\rho|$ and $\tau(x)=\rho(x)$ for each $x\in\{1,\dots,|\tau|\}$.
A set $T\subseteq k^{<\N}$ is a \emph{tree} if it is closed downward under $\preceq$.
Let $[T]$ denote the set of infinite paths through $T\subseteq k^{<\N}$. 
Then each $g\in[T]$ is a function $g:\N\rightarrow\{1,\dots,k\}$.
\end{defn}

We will define our helper colorings via initial segments, and we will use trees to organize these definitions.
Recall that we call a finite set $Y$ a block if it is $f$-homogeneous and there is $w\in\N$ such that $Y\subseteq\{1,\dots,w\}$ and $|Y|\geq\phi(w)$.

In proving Ramsey's theorem, 
	one builds infinite sets by adding one number at each step. 
We will build packed sets by adding one block $Y$ at each step.

During the construction, each block will be $f$ homogeneous, and all elements of our finite set $Y$ will be given a single color with all elements of future blocks.
Our goal is to build a semi-homogeneous (2-colored) set where each pair $x,y$ in the same block is given one single fixed color, and each pair $x,y$ in different blocks is given another (possibly different) fixed color.

\subsection{Largeness for exponent 2}
We will use a helper coloring $g:\N\rightarrow\{1,\dots,k\}$ to define this sequence of blocks.
When we select any block $Y$, we will commit to choosing all future blocks inside $\{y: (\forall x\in Y)[f(x,y)=g(x)]\}$.
By choosing each $Y$ to be $g$-homogeneous, we ensure that the elements of each $Y$ are given a single color with all future blocks.
To define $g$ so that this procedure can be iterated, we use a notion of ``largeness.''

The notion of largeness given by Erd\H os and Galvin is $\Pi^1_1$ 
	(quantifying over all possible $g:\N\rightarrow\{1,\dots,p\}$), and corresponds to our Claim \ref{claim.inductivestep}.  
To make the construction computable relative to some $\paset\gg \zero'$, 
	we use a related $\Pi^0_2$ definition of largeness.

\begin{defn}
\label{defn.large.2}
Fix a computable instance $f,\phi$ of $\PRT^2_k$.  
A set $X\subseteq\N$ is \emph{large} if 
\begin{align*}
(\forall m)(\forall p)(\exists w)(\forall \rho\in p^{w})[\exists Y&\subseteq (m,w]\cap X\ s.t. \\
	& |Y|\geq \phi(w),\\
	& Y \text{ is homogeneous for }f, \text{ and}\\
	& Y \text{ is homogeneous for }\rho.]
\end{align*}
We say $X$ is \emph{small} if $X$ is not large.
Note that ``$X$ is large'' is a $\Pi^{0,X}_2$ statement.
\end{defn}

Notice also that the definition of ``large'' and ``small'' depends on the computable instance $f,\phi$ of $\PRT^2_k$. 
Therefore, we will always use ``large'' in the context of some fixed computable instance. 

The proofs of Lemmas \ref{lemma.Nlarge} and \ref{lemma.unionlarge.2} 
	are very close to the corresponding proofs given in \cite{ramseytype}. 
We give full proofs for completeness.
 
We begin with the analog of the $n=2$ case of Claim 1 of \cite{ramseytype}.
\begin{lemma}\label{lemma.Nlarge}
Let $f,\phi$ be a computable instance of $\PRT^2_k$. Then
$\N$ is large.
\end{lemma}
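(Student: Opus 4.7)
The plan is to reduce largeness of $\N$ directly to the hypothesis $w\rightarrow(\phi(w))^2_{k+1}$ by folding $\rho$, the forbidden initial segment $\{1,\dots,m\}$, and $f$ into a single $(k+1)$-coloring of pairs. Given $m$ and $p$, the first step is to use $\liminf_w\phi(w)=\infty$ to choose $w$ with $\phi(w)>\max(m,p+1)$.

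Given any $\rho\in p^{w}$, I extend it to $\rho':\{1,\dots,w\}\rightarrow\{1,\dots,p+1\}$ by setting $\rho'(x)=\rho(x)$ for $x>m$ and $\rho'(x)=p+1$ for $x\leq m$, tagging the forbidden initial segment with a fresh color. Then I define the pair coloring $h:[\{1,\dots,w\}]^2\to\{1,\dots,k+1\}$ by $h(\{x,y\})=f(\{x,y\})$ whenever $\rho'(x)=\rho'(y)$, and $h(\{x,y\})=k+1$ otherwise. Since $h$ uses only $k+1$ colors, the hypothesis on $\phi$ gives an $h$-homogeneous set $Y\subseteq\{1,\dots,w\}$ with $|Y|\geq\phi(w)$.

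The key verification is that the size bound on $\phi(w)$ forces $Y$ into the good case. If $Y$ had $h$-color $k+1$, then $\rho'$ would be injective on $Y$, giving $|Y|\leq p+1<\phi(w)$, a contradiction. If $Y$ had some $h$-color $c\in\{1,\dots,k\}$ with $\rho'$ constantly equal to $p+1$ on $Y$, then $Y\subseteq\{1,\dots,m\}$, giving $|Y|\leq m<\phi(w)$, again a contradiction. So $\rho'$ is constant on $Y$ with some value in $\{1,\dots,p\}$, which forces $Y\subseteq(m,w]$ with $\rho$ constant on $Y$; at the same time $f$ is constant on $[Y]^2$. Hence $Y$ witnesses the largeness clause for this choice of $m$ and $p$.

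The main (small) obstacle is engineering $h$ so that it uses exactly $k+1$ colors: the naive combined coloring $(f(\{x,y\}),\rho(x),\rho(y))$ would need on the order of $k(p+1)^2$ colors, and the hypothesis on $\phi$ no longer applies. Lumping every ``mixed'' pair (one with $\rho'(x)\neq\rho'(y)$) into the single new color $k+1$ is the trick that keeps us on budget, while the cardinality thresholds $\phi(w)>p+1$ and $\phi(w)>m$ rule out the two degenerate outcomes and deliver simultaneous homogeneity for $f$ and $\rho$ together with avoidance of $\{1,\dots,m\}$.
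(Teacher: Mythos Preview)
Your proof is correct and takes essentially the same approach as the paper's: both fold $f$, $\rho$, and the interval restriction into a single $(k{+}1)$-coloring of pairs, apply $w\rightarrow(\phi(w))^2_{k+1}$, and then use a cardinality threshold on $\phi(w)$ to exclude the dump color $k{+}1$. The only cosmetic difference is that the paper assigns $k{+}1$ directly to any pair meeting $\{1,\dots,m\}$ (rather than tagging that segment with a fresh $\rho'$-color) and rules out the bad case via the single pigeonhole bound $\phi(w)-m\rightarrow(2)^1_p$, in place of your two separate bounds $\phi(w)>p+1$ and $\phi(w)>m$.
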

\begin{proof}
Given $m$ and $p$, define $w$ large enough so that $(\ \phi(w)-m\ )\rightarrow (2)^1_{p}$.
For any $\rho\in p^{w}$, we must obtain $Y\subseteq(m,w]$ as in the definition of largeness. 

We will use the assumption in $\PRT^2_k$ that $w\rightarrow(\phi(w))^2_{k+1}$ for all $w$.
To do this, define a coloring $F:[\{1,\dots,w\}]^2\rightarrow \{1,\dots,k,k+1\}$ as follows: 
For $Z\in[\{1,\dots,w\}]^2$, set $F(Z)=f(Z)$ if $Z\subseteq(m,w]$ and $Z$ is $\rho$-homogeneous.  Otherwise, set $F(Z)=k+1$.  

Because $w\rightarrow(\phi(w))^2_{k+1}$, there is a set $Y\subseteq \{1,\dots,w\}$ such that $|Y|\geq \phi(w)$ and $Y$ is $F$ homogeneous for some $i\in\{1,\dots,k,k+1\}$.  

We show that $i\neq k+1$:  
Because $\phi(w)-m\rightarrow(2)^1_{p}$, there is an $2$-element subset $Z\subset Y\cap(m,w]$ which is $\rho$-homogeneous.  
By definition of $F$, $F(Z)\neq k+1$.
Because $Y$ is $F$ homogeneous, we see that $i=F(Z)\neq k+1$. 

Consequently, $Y$ is $f$-homogeneous, $Y\subseteq(m,w]$, and $|Y|\geq \phi(w)$. 
To see that $Y$ is $\rho$-homogeneous, notice that any 2-element subset of $Y$ is $\rho$-homogeneous.  
\end{proof}

Next, we give the analog of the $n=2$ case of Claim 2 of \cite{ramseytype}.
\begin{lemma}\label{lemma.unionlarge.2}
Let $f,\phi$ be a computable instance of $\PRT^2_k$.
The union of two small sets is small.
Therefore, for any finite partition $L=L_1\cup\dots\cup L_k$ of a large set $L$, one of the $L_i$ is large.
\end{lemma}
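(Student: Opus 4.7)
The plan is to prove the contrapositive of the first assertion: given small sets $X_1, X_2$, exhibit witnesses $m,p$ certifying that $X_1 \cup X_2$ is small, and then derive the partition statement by induction on $k$.

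First I would unwind the definition of smallness: $X$ is small iff there exist $m$ and $p$ such that for every $w$ there is some $\rho \in p^w$ for which no $f$-homogeneous $Y \subseteq (m,w]\cap X$ of size at least $\phi(w)$ is $\rho$-homogeneous. Let $m_1,p_1$ witness smallness of $X_1$ and $m_2,p_2$ witness smallness of $X_2$, and fix corresponding $\rho_i^{(w)} \in p_i^w$ for each $w$. I would then set $m := \max(m_1,m_2)$ and $p := 2 \cdot p_1 \cdot p_2$, and for each $w$ define $\rho^{(w)} \in p^w$ by
\[
\rho^{(w)}(x) \;=\; \bigl(\rho_1^{(w)}(x),\; \rho_2^{(w)}(x),\; j(x)\bigr),
\]
where $j(x)=1$ if $x \in X_1$ and $j(x)=2$ otherwise. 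The key trick is the extra factor of $2$: it encodes an indicator for $X_1$-membership as a third coordinate of the color, so that any $\rho^{(w)}$-homogeneous set is forced to sit inside one of the two pieces.

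The verification then proceeds as follows. Suppose toward contradiction that some $Y \subseteq (m,w]\cap(X_1\cup X_2)$ is $f$-homogeneous, $\rho^{(w)}$-homogeneous, and $|Y|\geq \phi(w)$. Homogeneity of the third coordinate forces $Y \subseteq X_1$ or $Y \subseteq X_2$; homogeneity of the first (respectively second) coordinate then says $Y$ is $\rho_1^{(w)}$-homogeneous (respectively $\rho_2^{(w)}$-homogeneous). Since $m \geq m_i$, this contradicts the choice of $\rho_i^{(w)}$ witnessing smallness of $X_i$. Hence $m,p$ witness that $X_1 \cup X_2$ is small.

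For the second assertion, I would argue by induction on the number of pieces. The $k=1$ case is trivial. For the inductive step, write $L = L_1 \cup (L_2 \cup \dots \cup L_k)$; by the first part (taken contrapositively), one of $L_1$ or $L_2 \cup \dots \cup L_k$ is large, and in the latter case the inductive hypothesis applied to the $(k-1)$-fold partition $L_2 \cup \dots \cup L_k$ finishes the argument. I don't expect a real obstacle here: once the ``indicator coordinate'' trick in the first paragraph is spotted, the rest is essentially bookkeeping, matching closely the $n=2$ instance of Claim 2 of \cite{ramseytype}.
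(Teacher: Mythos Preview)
Your proposal is correct and follows essentially the same argument as the paper: combine the smallness witnesses by taking $m=\max(m_1,m_2)$, $p=2p_1p_2$, and the product coloring $\rho^{(w)}=(\rho_1^{(w)},\rho_2^{(w)},j)$ with the indicator coordinate $j$, so that any $\rho^{(w)}$-homogeneous $Y\subseteq(m,w]\cap(X_1\cup X_2)$ lands in a single $X_i$ and contradicts its smallness witness. The partition statement then follows by the obvious induction, exactly as you outline.
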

\begin{proof}
Given $S_1$ and $S_2$ small, fix $m_i$, $p_i$, and $w\mapsto \rho_{i,w}\in p_i^{w}$ witnessing the smallness of $S_i$.  
Define $m=\max\{m_1,m_2\}$ and $p=p_1\cdot p_2\cdot 2$. 
Note that $p>p_i$ (this is why Definition \ref{defn.large.n} quantifies over all possible choices of $p$).    
Define $s:\N\rightarrow\{1,2\}$ by $s(x)=1$ if $x\in S_1$, and $s(x)=2$ otherwise.  
Given $w$, define $\hat{\rho}_w(x)=\langle \rho_{1,w}(x), \rho_{2,w}(x), s(x)\rangle$ for each $x\leq w$.   

Suppose toward a contradiction that $S_1\cup S_2$ is large.  
Then there is some $\hat{w}$ witnessing that $S_1\cup S_2$ is large for $p$ and $m$ defined as above.  
Obtain the set $\hat{Y}\subseteq S_1\cup S_2$ promised by the definition of large applied to $m,p,\hat{w},$ and $\hat{\rho}_w$.  
Note that $\hat{Y}$ is homogeneous for $s$, so $\hat{Y}\subseteq S_i$ for some $i$. 
In either case, $\hat{Y}$ is contained in the interval $(m_i,\hat{w}]$, is homogeneous for $f$ and $\rho_{i,\hat{w}}$, and has size $|\hat{Y}|\geq \phi(\hat{w})$.  
This contradicts our choice of witnesses of the smallness of $S_i$. 
\end{proof}

\subsection{The construction}
To prove $\PRT^2_k$, we first show that if $X$ is large in the sense of Definition \ref{defn.large.2}, 
	it is large in the $\Pi^1_1$ sense used by Erd\H os and Galvin:  

\begin{claim} 
\label{claim.inductivestep}
Let $f,\phi$ be a computable instance of $\PRT^2_k$.
If $X$ is large and $g:\N\rightarrow\{1,\dots,p\}$ with $p\in\N$, then for each $m\in\N$ there is a $w\in\N$ and $Y\subseteq(m,w]\cap X$ such that $|Y|\geq\phi(w)$ and $Y$ is homogeneous for $f$  and $g$.
\end{claim}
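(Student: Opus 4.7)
The plan is to derive this $\Pi^1_1$-style largeness directly from the $\Pi^0_2$ definition by specializing the universal quantifier over finite colorings $\rho\in p^{w}$ to the restriction of the given $g$. Concretely, I would fix $X$ large, a coloring $g:\N\rightarrow\{1,\dots,p\}$, and $m\in\N$, and then invoke Definition \ref{defn.large.2} with this particular $m$ and $p$ to obtain a witness $w\in\N$ such that for every $\rho\in p^{w}$ there is a set $Y\subseteq(m,w]\cap X$ which is $f$-homogeneous, $\rho$-homogeneous, and has $|Y|\geq\phi(w)$.

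Next, I would let $\rho := g\upto\{1,\dots,w\}$, which is an element of $p^{w}$, and apply the guarantee from largeness to this particular $\rho$. This yields a $Y\subseteq(m,w]\cap X$ with $|Y|\geq\phi(w)$ that is $f$-homogeneous and $\rho$-homogeneous. Since $Y\subseteq\{1,\dots,w\}$ and $\rho$ agrees with $g$ on $\{1,\dots,w\}$, $\rho$-homogeneity of $Y$ is literally $g$-homogeneity of $Y$, which finishes the argument.

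There is essentially no obstacle here; the whole content of the claim is that the $\Pi^0_2$ definition of largeness already packs in the $\Pi^1_1$ consequence, because a uniform witness $w$ that works against all finite $p$-colorings of $\{1,\dots,w\}$ in particular works against the truncation of any total $g:\N\rightarrow\{1,\dots,p\}$. The only thing worth flagging is that it is important that the $w$ provided by largeness depends only on $m$ and $p$ and not on $\rho$, so that we are entitled to plug in $g\upto\{1,\dots,w\}$ after $w$ has been chosen; this is exactly the quantifier order $(\forall m)(\forall p)(\exists w)(\forall \rho\in p^{w})$ in Definition \ref{defn.large.2}.
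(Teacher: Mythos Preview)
Your proposal is correct and follows exactly the same approach as the paper: instantiate the definition of largeness at the given $m$ and $p$, obtain $w$, plug in $\rho = g\upto w$, and observe that $\rho$-homogeneity of $Y\subseteq\{1,\dots,w\}$ is the same as $g$-homogeneity. Your remark about the quantifier order $(\forall m)(\forall p)(\exists w)(\forall\rho)$ is precisely the point that makes this work.
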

\begin{proof}
Fix $X$ large, $g:\N\rightarrow\{1,\dots,p\}$, and any $m\in\N$.  
Find $w\in\N$ as in the definition of largeness.  
Then $g\upto w\in p^{w}$ so there is a set $Y\subseteq(m,w]$ with $|Y|\geq \phi(w)$ homogeneous for $g\upto w$ and $f$.  
Hence, $Y$ is homogeneous for $g$.
\end{proof}

We will use a single well-chosen helper coloring $g:\N\rightarrow\{1,\dots,k\}$ to build a packed semi-homogeneous set.

\begin{lemma}
\label{lemma.obtaing}
Let $f,\phi$ be a computable instance of $\PRT^2_k$.
There is an infinite $\Pi^0_2$ definable tree $T$ such that for each $g\in[T]$, and for all $w\in\N$, the set $\{y>w: (\forall x\leq w)[f(x,y)=g(x)]\}$ is large.
\end{lemma}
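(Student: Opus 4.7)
The plan is to define $T$ by putting $\tau \in k^{<\N}$ into $T$ precisely when the set $X_\tau := \{y > |\tau| : (\forall x \leq |\tau|)[f(x,y) = \tau(x)]\}$ is large in the sense of Definition \ref{defn.large.2}. With this choice, conclusion (iv) of the lemma is built in: for any $g \in [T]$ and any $w \in \N$, the set $\{y > w : (\forall x \leq w)[f(x,y) = g(x)]\}$ is exactly $X_{g \upto w}$, which is large because $g \upto w \in T$. Moreover, $X_\tau$ is uniformly computable from $f$ and $\tau$, so ``$\tau \in T$'' has the form ``$X_\tau$ is large,'' which is $\Pi^0_2$ uniformly in $\tau$.

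I will then verify that $T$ is a tree and is infinite. For downward closure, if $\tau \preceq \rho$ then $X_\rho \subseteq X_\tau$: any $y > |\rho| \geq |\tau|$ satisfying $f(x,y) = \rho(x)$ for all $x \leq |\rho|$ in particular satisfies $f(x,y) = \tau(x)$ for $x \leq |\tau|$. Largeness is monotone under supersets, since a witness $Y \subseteq (m,w] \cap X$ trivially lies in $(m,w] \cap X'$ for any $X' \supseteq X$; so $X_\rho$ large gives $X_\tau$ large. For the extension step, given $\tau \in T$, the sets $X_{\tau \cat 1}, \dots, X_{\tau \cat k}$ partition $X_\tau \setminus \{|\tau|+1\}$ according to the value of $f(|\tau|+1, y)$. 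Deleting the single point $|\tau|+1$ preserves largeness (push $m$ past it in Definition \ref{defn.large.2}), and by Lemma \ref{lemma.unionlarge.2} one of the $X_{\tau \cat c}$ must be large, giving $\tau \cat c \in T$. Starting from $\emptyset \in T$ (via $X_\emptyset = \N$ and Lemma \ref{lemma.Nlarge}) and iterating, $T$ contains nodes of every length, so $T$ is infinite and in fact has paths by König's lemma on this $k$-branching tree.

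The main obstacle is minor: the argument leans on two unstated-but-immediate facts about largeness, namely monotonicity under supersets and invariance under removing finitely many points, both of which fall straight out of Definition \ref{defn.large.2}. The slightly delicate indexing — that $X_{\tau \cat c}$ sits inside $X_\tau$ only after excluding the single point $|\tau|+1$ — is absorbed into the finite-modification observation. Once these are in hand, the construction is a direct translation of the largeness machinery into tree form, and no genuine combinatorial obstacle remains.
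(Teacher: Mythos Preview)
Your proposal is correct and follows essentially the same approach as the paper: define $T$ by $\tau\in T$ iff $X_\tau$ is large, invoke Lemma~\ref{lemma.Nlarge} for the root, and use Lemma~\ref{lemma.unionlarge.2} on the partition by $f(|\tau|+1,y)$ for the extension step. You are in fact slightly more careful than the paper, making explicit the downward-closure argument (via monotonicity of largeness) and the finite-modification point about removing $|\tau|+1$, both of which the paper leaves implicit.
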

\begin{proof}
We begin by defining the $\Pi^0_2$ tree $T$.
For each $\tau\in k^{<\N}$, 
$$\tau\in T \iff [\{y> |\tau|: (\forall x\leq |\tau|)[\tau(x)=f(x,y)]\} \text{ is large}].$$

We show that $T$ is infinite by induction on $|\tau|$.  
The empty string is an element of $T$ by Lemma \ref{lemma.Nlarge}.  
Suppose $\tau\in T$.  
Then $\{y> |\tau|: (\forall x\leq |\tau|)[\tau(x)=f(x,y)]\}$ is large, so $\{y> |\tau|+1:\ f(|\tau|+1,y)= i \land  (\forall x\leq |\tau|)[\tau(x)=f(x,y)]\}$ is large for some $i\in \{1,\dots,k\}$ by Lemma \ref{lemma.unionlarge.2}.
Let $\tau\cat i$ denote the string obtained by adding the character $i$ to the end of the string $\tau$.  
Then $\tau\cat i\in T$.

Let $g\in[T]$ be any path.  
By the definition of $T$, the set $\{y> w: (\forall x\leq w)[g(x)=f(x,y)]\}$ is large for all $w\in\N$.  
In other words, $g$ is the desired helper coloring.
\end{proof}

We will be able to compute a solution to an instance of $\PRT^2_k$ 
	using any path $g$ through this tree $T$.  
The next lemma gives an upper bound on the computational strength of $g$.
Given $X,\paset\subseteq\N$, we say that $\paset\gg X$ if $\paset$ computes a path through each infinite $X$-computable binary tree.  

\begin{lemma}[Lemma 4.2 of \cite{CJS}]\label{lemma.choosePi2}
Suppose that $\paset\gg\zero'$ and that $(\gamma_{e,0},\gamma_{e,1})_{e\in\omega}$ is an effective
enumeration of all ordered pairs of $\Pi^0_2$ sentences of first order arithmetic. 
Then there is a $\paset$-computable $\{0,1\}$-valued (total) function $f$ such that $\gamma_{e,f(e)}$ is true whenever $\gamma_{e,0}\lor\gamma_{e,1}$ is true.
\end{lemma}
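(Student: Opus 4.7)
The plan is to reduce the problem to finding a path through an infinite $\zero'$-computable binary tree and then invoke $\paset\gg\zero'$ directly. Writing each $\Pi^0_2$ sentence $\gamma_{e,i}$ in the standard form $\forall x\,\exists y\,R_{e,i}(x,y)$ for a computable relation $R_{e,i}$, the inner $\Sigma^0_1$ matrix is decidable from $\zero'$, so the falsity of $\gamma_{e,i}$ is $\Sigma^{0,\zero'}_1$. I would fix a uniform $\zero'$-computable enumeration $U_e[s]$ of the set $U_e=\{i\in\{0,1\}:\gamma_{e,i}\text{ is false}\}$, and reformulate the goal as: find $f$ with $f(e)\notin U_e$ whenever $U_e\neq\{0,1\}$.

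Next, I would define a $\zero'$-computable tree $T\subseteq 2^{<\N}$ by declaring $\sigma\in T$ iff for every $e<|\sigma|$, either $\sigma(e)\notin U_e[|\sigma|]$ or $U_e[|\sigma|]=\{0,1\}$. To verify that $T$ is infinite, I would exhibit, for each $n$, the explicit length-$n$ string $\sigma_n$ defined by $\sigma_n(e)=0$ if $1\in U_e[n]$ and $\sigma_n(e)=1$ otherwise. A short case analysis on whether $0$ and $1$ lie in $U_e[n]$ shows each $\sigma_n$ belongs to $T$: if $\sigma_n(e)=1$ then $1\notin U_e[n]$ by construction, while if $\sigma_n(e)=0$ then $1\in U_e[n]$, and either $0\notin U_e[n]$ (so $\sigma_n(e)\notin U_e[n]$) or $0\in U_e[n]$ (so $U_e[n]=\{0,1\}$). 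Since $T$ has strings of every length and $\paset\gg\zero'$, $\paset$ computes a path $g\in[T]$.

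Setting $f(e)=g(e)$, I would verify the desired property by contradiction. Assume $\gamma_{e,0}\lor\gamma_{e,1}$ is true, so $U_e\neq\{0,1\}$, and suppose $f(e)\in U_e$. Then $f(e)\in U_e[n]$ for all sufficiently large $n$. Since every initial segment $g\upto n$ with $n>e$ lies in $T$, for such $n$ we are forced into the second disjunct, so $U_e[n]=\{0,1\}$, giving $U_e=\{0,1\}$, a contradiction. Hence $\gamma_{e,f(e)}$ is true.

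The main technical subtlety is choosing the tree so that membership in $T$ is decidable from $\zero'$ alone: a naive definition using the full sets $U_e$ in place of their stage-$|\sigma|$ approximations would make the condition merely $\Sigma^{0,\zero'}_1$, and no PA-over-$\zero'$ argument would apply. Using the approximation $U_e[|\sigma|]$ keeps $T$ genuinely $\zero'$-computable, at the cost of having to construct the explicit witnesses $\sigma_n$ above to certify that $T$ remains infinite.
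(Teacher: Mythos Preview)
The paper does not prove this lemma at all; it is simply quoted from \cite{CJS}. So there is nothing to compare against, and I will just assess your argument on its own.

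Your overall strategy---reduce to computing a path through a $\zero'$-computable binary tree---is the standard one, and the reduction of ``$\gamma_{e,i}$ is false'' to a $\Sigma^{0,\zero'}_1$ event is correct. However, the set $T$ you write down is \emph{not} a tree: it is not closed under initial segments. Suppose $0$ enters $U_e$ at stage $1$ and $1$ enters at stage $2$, so $U_e[1]=\{0\}$ and $U_e[2]=\{0,1\}$. Then any length-$2$ string $\sigma$ with $\sigma(e)=0$ satisfies the $e$-th clause of your definition (since $U_e[2]=\{0,1\}$), but its length-$1$ restriction $\tau$ does not (since $\tau(e)=0\in U_e[1]$ and $U_e[1]\neq\{0,1\}$). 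Consequently, the fact that $T$ contains your witnesses $\sigma_n$ of every length does \emph{not} imply $[T]\neq\emptyset$, and the appeal to $\paset\gg\zero'$ is unjustified as stated. (Indeed, your $\sigma_n$ are typically pairwise incomparable.)

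The fix is small. Replace the clause ``$U_e[|\sigma|]=\{0,1\}$'' by ``$1-\sigma(e)$ was enumerated into $U_e$ no later than $\sigma(e)$.'' Equivalently, put $\sigma\in T$ iff for every $e<|\sigma|$, either $\sigma(e)\notin U_e[|\sigma|]$ or $1-\sigma(e)\in U_e[s_e]$ where $s_e$ is the stage at which $\sigma(e)$ entered. This condition \emph{is} downward closed (if $\sigma(e)$ has entered by stage $m\leq|\sigma|$, the ordering of entry times is unchanged), and the witness of length $n$ is obtained by setting $\sigma_n(e)$ to be the last of $\{0,1\}$ to enter $U_e[n]$ (or any element not yet in $U_e[n]$). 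For any path $g$, if $g(e)\in U_e$ then $1-g(e)\in U_e$ as well, so $U_e\neq\{0,1\}$ forces $g(e)\notin U_e$, exactly as you want.
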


In words, Lemma \ref{lemma.choosePi2} says that any $P\gg\zero'$ can, if given two $\Pi^0_2$ sentences where at least one is true, select a true sentence.
It is straightforward to extend this lemma from choosing between $2$ sentences to choosing between $k$ sentences.
Given any $k$-many $\Pi^0_2$ formulas $\gamma_1,\dots,\gamma_k$ such that $\gamma_1\lor\dots\lor\gamma_k$ is true, we can use $\paset$ to uniformly find $c\in\{1,\dots,k\}$ such that $\gamma_c$ is true by querying the function $f$ from the above lemma $k-1$ times.  

In Section \ref{sect.PRT2.low2}, we will need the full strength of Lemma \ref{lemma.choosePi2}.
Here, we only need a straightforward consequence: 
	any set $\paset\gg \zero'$ can compute a path through each $\Pi^0_2$ definable $k$-branching tree.

\begin{theorem}
\label{thm.PRT2.pa}
Let $f,\phi$ be a computable instance of $\PRT^2_k$ and $\paset\gg\zero'$.
Then there is a set $A\leq_T \paset$ which is packed for $\phi$ and semi-homogeneous for $f$.
\end{theorem}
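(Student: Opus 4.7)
The plan is to combine Lemma~\ref{lemma.obtaing} with the consequence of Lemma~\ref{lemma.choosePi2} noted in the paragraph after its statement: because the tree $T$ of Lemma~\ref{lemma.obtaing} is $\Pi^0_2$-definable and $k$-branching, $\paset$ computes some path $g\in[T]$. The remainder of the argument will produce the packed semi-homogeneous set $A$ from $g$, $\paset$-computably.

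Using $g$ I will inductively build an increasing sequence of blocks $\{Y_i\}$. Set $w_0=0$. Given $w_i$, the set $X_i=\{y>w_i:(\forall x\leq w_i)[f(x,y)=g(x)]\}$ is large by the defining property of $T$, so Definition~\ref{defn.large.2} with $m=w_i$ and $p=k$ supplies some $w>w_i$ such that, in particular for the string $\rho=g\upto w$, there exists $Y\subseteq(w_i,w]\cap X_i$ with $|Y|\geq\phi(w)$ that is homogeneous for both $f$ and $g$. A $g$-computable search for the least such $w$ and the corresponding $Y$ terminates by largeness; call these $w_{i+1}$ and $Y_i$, and let $c_i\in\{1,\ldots,k\}$ and $d_i\in\{1,\ldots,k\}$ denote the $f$-color and the common $g$-value of $Y_i$ respectively. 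If $i<j$ and $y\in Y_j\subseteq X_j$, then $f(x,y)=g(x)$ for every $x\leq w_j$, so in particular $f(x,y)=g(x)=d_i$ whenever $x\in Y_i$. Hence pairs from a single block $Y_i$ receive the color $c_i$, while pairs spanning $Y_i$ and $Y_j$ (with $i<j$) receive the color $d_i$.

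Since $(c_i,d_i)$ takes values in the finite set $\{1,\ldots,k\}^2$, pigeonhole furnishes a pair $(c,d)$ with $(c_i,d_i)=(c,d)$ for infinitely many $i$. Treating $(c,d)$ as finite advice, put $H=\{i:(c_i,d_i)=(c,d)\}$ and $A=\bigcup_{i\in H}Y_i$. Both $H$ and $A$ are $\paset$-computable, since the entire sequence $(Y_i,c_i,d_i)$ is $g$-computable and $(c,d)$ is a parameter. The set $A$ is semi-homogeneous for $f$ because every pair in $[A]^2$ has $f$-color in $\{c,d\}$, and it is packed for $\phi$ because $|A\cap\{1,\ldots,w_{i+1}\}|\geq|Y_i|\geq\phi(w_{i+1})$ for every $i\in H$ and $H$ is infinite.

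The only delicate point is the final selection: the sequence $(c_i,d_i)$ is only $\paset$-computable, so extracting an infinite constant subsequence looks like $\RT^1_{k^2}$ for a $\paset$-computable coloring, which in general requires strictly more than $\paset$. The trick, which also appeared in Corollary~\ref{cor.PRT1.comp}, is that $(c,d)$ is merely a finite parameter whose existence is guaranteed by pigeonhole, so no uniform selection is needed; once the constants are fixed, $H$ and hence $A$ become directly $\paset$-computable.
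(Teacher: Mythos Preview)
Your argument is correct and essentially identical to the paper's own proof: both obtain a $\paset$-computable path $g\in[T]$ via Lemma~\ref{lemma.choosePi2}, use it with Claim~\ref{claim.inductivestep} (equivalently, the definition of largeness applied to $\rho=g\upto w$) to build a $g$-computable increasing sequence of blocks whose within-block and cross-block $f$-colors are recorded, and then apply the pigeonhole principle non-uniformly (treating the winning pair of colors as a finite parameter) to thin to a semi-homogeneous union. Your final paragraph's resolution of the ``delicate point'' is exactly the paper's observation that $I$ is non-uniformly computable from $g$; the only cosmetic difference is that the paper phrases the two colors as functions $f_1,f_2$ and applies pigeonhole twice rather than once to the pair $(c_i,d_i)$.
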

\begin{proof}

For the given computable instance $f,\phi$, apply Lemma \ref{lemma.obtaing} to obtain an infinite $\Pi^0_2$ definable tree. 
Then $[T]\neq\emptyset$ because $T$ is infinite.  
Because $T$ is $\Pi^0_2$ and because $P\gg\zero'$, there is a $\paset$-computable path $g\in[T]$.

We first select an increasing sequence of blocks $\{Y_i\}$ and an infinite set $\{w_0<w_1<\dots\}$ such that for each $i$, two properties hold: (1) $Y_i\subseteq (w_{i-1},w_i]$ with $|Y_i|\geq\phi(w_i)$ and (2) there is a color $c_i$ such that for each $j>i$, each $x\in Y_i$, and each $y\in Y_j$, we have  $f(x,y)=c_i=g(x)$.  

We proceed by induction on $s\in\N$.
Let $w_1$ and $Y_1$ be the number and finite set obtained by searching for the $w$ and $Y$ promised to exist by Claim \ref{claim.inductivestep} applied to the colorings $f,g$ and the large set $\N$ with $m=w_{0}=1$ and $p=k$.  

For the inductive step, suppose $Y_1,\dots,Y_s$ has been defined.
By our choice of $g$, the set $X=\{y>w_s: (\forall x\leq w_s)[f(x,y)=g(x)]\}$ is large.  
Let $w_{s+1}$ and $Y_{s+1}$ be the number and finite set obtained 
	by searching for the $w$ and $Y$ promised to exist 
	by Claim \ref{claim.inductivestep} applied to the colorings $f,g$ and the large set $X$ with $m=w_s$ and $p=k$.  
Note that $Y_{s+1}$ is $P$-uniformly computable.

Note also that $Y_{s+1}$ is homogeneous for $f$ and $g$, and $Y_{s+1}\subseteq(w_s,w_{s+1}]$ with $|Y_{s+1}|\geq \phi(w_{s+1})$.  
In other words, $Y_{s+1}$ is a block and property (1) holds for $i=s+1$.  
For each $i\leq s$, let $c_i=g(\min(Y_i))$.  
	We must show that property (2) holds for $j=s+1$.  
Because $Y_i$ is homogeneous for $g$, 
	and because $Y_{s+1}\subseteq X\subseteq\{y: (\forall i\leq s)(\forall x\in Y_i)[f(x,y)=g(x)]\}$, 
	we see that $f(x,y)=c_i=g(x)$ for each $x\in Y_i$ and each $y\in Y_{s+1}$.  
In other words, property (2) continues to hold and our construction produces the desired increasing sequence of blocks $\{Y_i\}$.

We next extract an infinite semi-homogeneous subsequence of blocks.
By property (2), there is a total function $f_1:\N\rightarrow\{1,\dots,k\}$ given by $f_1(i)=f(x,y)$ for any/all $x\in Y_i$ and $y\in Y_j$ for $j>i$.
Because each $Y_i$ is homogeneous for $f$, there is also a total function $f_2:\N\rightarrow\{1,\dots,k\}$ given by $f_2(i)=f(x,y)$ for any/all $x<y\in Y_i$.
Note that $f_1$ and $f_2$ are computable from $f$.

Applying the infinite pigeonhole principle twice, we obtain $I\subseteq\N$ infinite and homogeneous for $f_1$ and $f_2$. 
Furthermore, we can (non-uniformly) compute $I$ from $g$.  
Let $A=\bigcup_{i\in I} Y_i$.  

Then $A$ is semi-homogeneous because $f$ only assigns two colors to pairs in $A$: one to points in the same block and one to points in different blocks.
Because $I$ is infinite and because each $Y_i$ is a block, $A$ is packed.

Because this procedure was uniform in $g$, and because $g$ is $\paset$-computable, the set $A=\bigcup_{i\in I}Y_i$ is the desired $\paset$-computable packed semi-homogeneous set. 
\end{proof}

\begin{corollary} 
Each computable instance of $\PRT^2_k$ has a $\Delta^0_3$ definable solution. 
\end{corollary}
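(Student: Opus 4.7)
The plan is to invoke Theorem \ref{thm.PRT2.pa} with a carefully chosen oracle $\paset \gg \zero'$ that is itself $\Delta^0_3$. Since Theorem \ref{thm.PRT2.pa} already guarantees a solution $A \leq_T \paset$, it suffices to produce such a $\paset$ in $\Delta^0_3$.

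To produce $\paset$, I would relativize the Jockusch--Soare low basis theorem to $\zero'$. The collection of oracles $\paset \gg \zero'$ can be presented as the set of paths through an infinite $\zero'$-computable binary tree (the tree coding, for each $e$ and each stage, the possible consistent extensions of a universal partial $\zero'$-computable $\{0,1\}$-valued function). This is a nonempty $\Pi^{0,\zero'}_1$ class, so the relativized low basis theorem yields a member $\paset$ with $\paset' \leq_T \zero''$. In particular $\paset \leq_T \zero''$, so $\paset$ is $\Delta^0_3$.

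Given a computable instance $f,\phi$ of $\PRT^2_k$, applying Theorem \ref{thm.PRT2.pa} to this $\paset$ produces $A \leq_T \paset$ that is packed for $\phi$ and semi-homogeneous for $f$. Since $\paset$ is $\Delta^0_3$ and $A$ is computable from $\paset$, the set $A$ is $\Delta^0_3$.

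There is no real obstacle here; the content is entirely packaged in Theorem \ref{thm.PRT2.pa} together with the standard fact that PA-degrees over $\zero'$ exist below $\zero''$. The only mild care needed is in verifying that ``$\paset \gg \zero'$'' is a $\Pi^{0,\zero'}_1$ condition, which is routine given the usual tree-of-DNC-extensions presentation.
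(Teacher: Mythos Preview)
Your argument is correct, but it is more elaborate than necessary. The paper's proof is a single line: take $\paset=\emptyset''$, which satisfies $\emptyset''\gg\emptyset'$ and is $\Delta^0_3$, and apply Theorem~\ref{thm.PRT2.pa}. You instead invoke the relativized low basis theorem to produce a $\paset\gg\emptyset'$ that is low over $\emptyset'$; this works, and even yields a slightly sharper intermediate bound ($\paset'\leq_T\emptyset''$ rather than merely $\paset\leq_T\emptyset''$), but that extra strength is not used here. Your route does, however, anticipate the machinery behind the $low_2$ result in Section~\ref{sect.PRT2.low2}.
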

\begin{proof}
Recall that $\emptyset''$ is $\Delta^0_3$ and that $\emptyset''\gg\zero'$.
\end{proof}

The statement ``$\PRT^2_k$ has arithmetical solutions'' has a reverse math analog.

\begin{statement}
$\ACA$ is the axiom scheme which asserts that for each arithmetical formula $\phi(x,Y)$, if $Y$ is a set then $\{x\in\N: \phi(x,Y)\}$ exists as a set.
\end{statement}

\begin{corollary}
\label{cor.PRT2.ACA}
$\ACA$ implies $\PRT^2_k$ over $\RCA$.
\end{corollary}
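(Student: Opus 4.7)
The plan is to formalize the construction of Theorem \ref{thm.PRT2.pa} inside $\ACA$, relativized to an arbitrary instance $(f,\phi)$ of $\PRT^2_k$ rather than a computable one. Fix any such instance in the model, and proceed in parallel with the earlier argument.

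First, I would form the tree $T$ of Lemma \ref{lemma.obtaing} as an actual set. Its defining condition, ``$\tau\in T$ iff $\{y>|\tau|:(\forall x\leq|\tau|)[\tau(x)=f(x,y)]\}$ is large,'' is arithmetical in $(f,\phi)$ (in fact $\Pi^{0,f\oplus\phi}_2$), so $\ACA$ produces $T$ via arithmetic comprehension. The proofs of Lemmas \ref{lemma.Nlarge} and \ref{lemma.unionlarge.2}, as well as the induction showing $T$ is infinite, only invoke pigeonhole-style combinatorics and the assumption $w\to(\phi(w))^2_{k+1}$, all of which are provable in $\RCA$ (once we have $T$ itself as a set).

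Next, to extract a path $g\in[T]$, I would invoke weak König's lemma — or more precisely bounded König's lemma for $k$-branching trees. Since $\ACA$ implies $\ACA$-König (every infinite finitely branching tree has a path), we obtain some $g:\N\to\{1,\dots,k\}$ with $g\in[T]$ as a set. This replaces the appeal to $P\gg\emptyset'$ from the computability proof; in $\ACA$ we do not need the optimal degree-theoretic bound, only the existence of a path.

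With $g$ in hand, the remainder of Theorem \ref{thm.PRT2.pa} goes through directly. I would define the sequences $(w_s)$ and $(Y_s)$ by recursion on $s$, where at each step Claim \ref{claim.inductivestep} produces the next pair $(w_{s+1},Y_{s+1})$ by a $\Sigma^{0,g}_1$ search; the resulting sequence exists by $\Sigma^0_1$ induction and $\Delta^0_1$ comprehension relative to $g$. Then $f_1,f_2:\N\to\{1,\dots,k\}$ are $g$-computable, so two applications of the pigeonhole principle $\RT^1_k$ (provable in $\RCA$) yield the infinite homogeneous set $I$, and $A=\bigcup_{i\in I}Y_i$ is the desired packed semi-homogeneous set.

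The only real subtlety is bookkeeping rather than a genuine obstacle: one must check that the ``large'' predicate, while not decidable, only appears inside arithmetical comprehension instances used to define $T$, and that every subsequent step uses only searches for witnesses of true $\Sigma^0_1$ (or bounded) statements, which $\RCA$ handles on its own. Given that, the proof reduces to citing Theorem \ref{thm.PRT2.pa} with ``compute from $P\gg\emptyset'$'' replaced by ``form by arithmetic comprehension plus König's lemma,'' both available in $\ACA$.
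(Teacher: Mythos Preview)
Your proposal is correct and follows essentially the same approach as the paper: formalize the construction of Theorem~\ref{thm.PRT2.pa} inside $\ACA$. The paper phrases this as ``$\ACA$ gives $\emptyset''$, and $\emptyset''\gg\emptyset'$, so run the construction,'' whereas you phrase it as ``form $T$ by arithmetic comprehension and extract a path by K\"onig's lemma''; these are the same move in different clothing, and your version is if anything more explicit about why the post-$g$ steps need only $\RCA$.
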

\begin{proof}
Because $\ACA$ implies that $\zero''$ exists and because and $\zero''\gg\zero'$, the set constructions used to prove Theorem \ref{thm.PRT2.pa} can be performed in $\ACA$.  
We leave it to the reader to confirm that the verifications can be performed using induction for arithmetical formulas.
\end{proof}

Both the statement and the proof of Theorem \ref{thm.PRT2.pa} are the $n=2$ case of Theorem \ref{thm.PRTn.pa}.
In Section \ref{sect.PRT2.low2}, we will adapt this proof to obtain the $low_2$ proof of $\PRT^2_k$, and in Section \ref{sect.PRTn} we will generalize it to prove $\PRT^n_k$.

\section{A $low_2$ proof of $\PRT^2_k$}
\label{sect.PRT2.low2}

Our goal in this section is to prove that every computable instance of $\PRT^2_k$ has a $low_2$ solution.
As a consequence, we show $\PRT^2_k$ does not imply $\ACA$ over $\RCA$.

Our method builds on the work of Cholak, Jockusch, and Slaman in \cite{CJS}, who produced $low_2$ solutions to Ramsey's theorem for pairs. 
Their method was to use a degree $\paset\gg\zero'$ to build an infinite set (a $\Pi^0_2$ requirement) while simultaneously forcing the jump.
We will produce a $low_2$ packed semi-homogeneous set by using a degree $P\gg\emptyset'$ to compute a path through a sequence of $\Pi^0_2$ definable trees, while simultaneously forcing the jump. 

An important part of both constructions involves working $low$ trees.  
We say that $a$ is a \emph{lowness index} of $X$ if $X'=\Phi^{\emptyset'}_a$.
The Low Basis Theorem of Jockusch and Soare says that for each infinite computable tree $T\subseteq k^{<\N}$, there is an infinite $low$ path $g\in[T]$.  
Cholak, Jockusch, and Slaman note in \cite{CJS} that the uniformity in the proof of the Low Basis Theorem gives the following useful result:
there is a $\emptyset'$-computable uniform  procedure that takes any lowness index for any infinite $low$ tree $T$ and returns a lowness index for a path through $T$.

To ensure this uniformity during our construction, we will always implicitly associate a $low$ set with one of its lowness indices.
We will also use the standard observation that if $L$ is $low$, any statement $S(X)$ that is $\Pi^{0,L}_2$ is actually $\Pi^0_2$.

\subsection{The proof strategy}

To simplify the notation in this section, we fix a computable instance of $\PRT^2_k$.
That is, we fix a computable coloring $f:[\N]^2\rightarrow\{1,\dots,k\}$ 
	and a computable function $\phi:\N\rightarrow\N$ 
	such that $w\rightarrow(\phi(w))^2_{k+1}$ for all $w$.
Among other things, this allows us to say ``large'' to specify the definition of ``large for this computable instance.'' 

In the previous section, we built a sequence of blocks $\{Y_i\}$ such that each element of $Y_i$ was given color $g(\min Y_i)$ with each element of \emph{every} later block.  
In this section, we define a sequence of blocks $\{Y_i\}$ with a weaker property: each element of $Y_i$ will be given color $g(\min Y_i)$ with each element of \emph{almost every} later block.  

This will allow us to use Mathias forcing in the style of \cite{CJS} to build the sequence of blocks such that\ $\bigoplus_i Y_i$ is $low_2$. 
This induces a $low_2$ coloring of pairs: $d(i,j)=f(\min Y_i,\min Y_j)$.
Applying the following result, we will obtain an infinite $low_2$ semi-homogeneous sequence of blocks.

\begin{theorem}[Cholak, Jockusch, and Slaman \cite{CJS}]
\label{theorem.RT2.low2}
For each computable coloring $f:[\N]^2\rightarrow\{1,\dots,k\}$, 
there is an infinite $low_2$ homogeneous set.
\end{theorem}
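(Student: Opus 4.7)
The plan is to carry out the cohesive-plus-stable decomposition. Fix a computable $f:[\N]^2\to\{1,\dots,k\}$ and first build an $f$-cohesive $low_2$ set $C$: one for which $y\mapsto f(x,y)$ is eventually constant on $C$ for every $x$, with eventual value $g(x)$. Restricting $f$ to $[C]^2$ then yields a stable coloring whose limit-color function $g$ is $\Delta^{0,C}_2$, hence $\Delta^0_2$ since $C$ is $low_2$. From this I would extract a $low_2$ infinite $H\subseteq C$ that is homogeneous for $f$, proving the theorem.

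Both halves use Mathias forcing with $low$ reservoirs. A condition is $(D,L)$ with $D$ finite, $L$ infinite and $low$ (carrying a fixed lowness index), and $\max D<\min L$. In the cohesive step one additionally requires $f(x,\cdot)\upto L$ to be constant for each $x<|D|$; in the stable step one requires $L\subseteq g^{-1}(i)$ for some fixed infinite color class and that each element of $D$ exceed the $\zero'$-computable stability witness $N_x$ for every earlier $x\in D$, which forces homogeneity. Extensions come from pigeonhole---producing an infinite piece in any partition of $L$ into $k$ computable subclasses---followed by a relativized Low Basis Theorem applied to an appropriate $\zero'$-computable binary tree to make that piece $low$. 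To make the generic $G$ itself $low_2$, force each $\Sigma^{0,G}_2$ formula along the way: the set of conditions forcing a given $\Sigma^{0,G}_2$ sentence (respectively, its negation) is $\Pi^0_2$ in a low oracle, hence plain $\Pi^0_2$, so Lemma \ref{lemma.choosePi2} lets a degree $\paset\gg\zero'$ select a dense side uniformly. The generic produced this way has jump recursive in $\zero'\oplus\paset\leq_T\zero''$.

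The main obstacle is the uniformity required throughout: one must maintain lowness indices for the shrinking reservoirs and verify that the forcing relation for $\Sigma^0_2$ formulas stays uniformly $\Pi^0_2$ in those indices. This is where the uniform content of the Low Basis Theorem---the fact that there is a $\zero'$-computable operator sending a lowness index for an infinite $low$ tree to a lowness index for a path through it---becomes essential, and is precisely what the excerpt highlights in preparation for the forthcoming $low_2$ proof of $\PRT^2_k$.
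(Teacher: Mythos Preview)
The paper does not prove Theorem~\ref{theorem.RT2.low2}; it is quoted as a black box from Cholak--Jockusch--Slaman \cite{CJS} and invoked inside Lemma~\ref{lemma.uselow2code}, so there is no in-paper argument to compare against. Your outline is essentially the CJS cohesive-plus-stable strategy and is sound in shape.

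One slip worth flagging: from $C$ being $low_2$ you cannot infer that a $\Delta^{0,C}_2$ function $g$ is $\Delta^0_2$; that would require $C$ to be $low$, not merely $low_2$. The repair is the standard one (and is exactly how the present paper uses the result in Lemma~\ref{lemma.uselow2code}): carry out the stable step \emph{relative to} $C$ to produce $H$ with $(H\oplus C)''\leq_T C''$, and then use $C''\leq_T\zero''$ to conclude that $H$ is $low_2$. With that adjustment your sketch matches the CJS proof.
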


A coloring $f:[\N]^2\rightarrow\{1,\dots,k\}$ is \emph{stable} if $\lim_y f(x,y)$ exists for each $x\in\N$.  
As we will see during the construction, the induced coloring $d$ is stable.  
Therefore, we only use the stable case of Theorem \ref{theorem.RT2.low2}.

Recall that $\{Y_i\}$ is an increasing sequence of blocks if for each $i$, 
	$Y_i\subseteq\N$ is $f$-homogeneous, $\max(Y_i)<\min(Y_{i+1})$, and $|Y_i\cap\{1,\dots,w\}|\geq\phi(w)$ for some $w$.

\begin{defn} 
For any $I\subseteq\N$, an increasing sequence of blocks $\{Y_i\}_{i\in I}$ is \emph{pre-semi-homogeneous for $f$} if for each $i,j\in I$ with $i<j$, there is a single color $c_{i,j}$  such that $f(x,y)=c_{i,j}$ for any $x\in Y_i$, and any $y\in Y_j$.
\end{defn}

During the construction, we will need a uniform way represent the sequence we are building.  
Furthermore, at the end of the construction, we will need to uniformly extract a subsequence.  
Because of this, we will work with a specific code for the sequence $\{Y_i\}$.

\begin{defn}
We say that a set $C$ is a code for an infinite sequence of sets $\{Y_i\}$ 
if there are sets $X$ and $W_X$ s.t.\ the following hold:
\begin{enumerate}
	\item $C=X\oplus W_X$,
	\item $W_X=\{w_0<w_1<\dots\}$ is an infinite set, and 
	\item $Y_i=X\cap (w_{i-1},w_i]$.
\end{enumerate}
\end{defn}

Intuitively, $W_X$ records boundaries between the blocks, and $X$ records the members of the blocks.
By forcing $X\oplus W_X$ to be $low_2$, we will ensure that we can uniformly recover the sequence ${\{Y_i\}}$ in a $low_2$ way.

\begin{lemma}
\label{lemma.uselow2code}
Suppose that $C\{Y_i\}=X\oplus W_X$ is a $low_2$ code for an increasing pre-semi-homogeneous sequence.  
Then there is a $low_2$ set $A$ that is packed for $\phi$ and semi-homogeneous for $f$.  
\end{lemma}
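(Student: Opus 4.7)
The plan is to use the $low_2$ code $C = X \oplus W_X$ as an oracle from which the blocks $Y_i = X \cap (w_{i-1}, w_i]$ are uniformly recoverable. From $C$ I can compute two auxiliary colorings: the singleton coloring $e : \N \to \{1,\dots,k\}$ sending $i$ to the $f$-homogeneous color of $Y_i$, and the pair coloring $d : [\N]^2 \to \{1,\dots,k\}$ given by $d(i,j) = f(\min Y_i, \min Y_j)$. By pre-semi-homogeneity, $d(i,j) = c_{i,j}$, so $f(x,y) = d(i,j)$ for every $x \in Y_i$ and every $y \in Y_j$ with $i<j$.

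Next I would thin $\N$ in two steps. First, choose (non-uniformly) a color $e^* \in \{1,\dots,k\}$ that $e$ attains infinitely often and set $H_0 = e^{-1}(e^*)$; this set is $C$-computable and every block $Y_i$ with $i \in H_0$ is $f$-homogeneous with the same color $e^*$. Second, apply the relativization of Theorem \ref{theorem.RT2.low2} to the $C$-computable coloring $d \upto [H_0]^2$ to obtain an infinite $d$-homogeneous subset $H \subseteq H_0$, with cross color $c^*$ say, that satisfies $H'' \le_T C''$. Because $C$ is $low_2$ we have $C'' \equiv_T \emptyset''$, so $H$ is itself $low_2$.

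Now set $A = \bigcup_{i \in H} Y_i$. The construction gives $A \le_T H \oplus C$, so $A$ is $low_2$. The set $A$ is semi-homogeneous for $f$ because any pair $\{x,y\} \in [A]^2$ either lies inside some $Y_i$ (receiving color $e^*$) or spans two distinct blocks $Y_i, Y_j$ with $i<j$ in $H$ (receiving color $c^*$), so at most two colors occur. Packedness is immediate: $H$ is infinite and, by the block property of each $Y_i$, there are infinitely many $w$ with $|A \cap \{1,\dots,w\}| \ge \phi(w)$.

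The main bookkeeping obstacle is the correct relativization of Theorem \ref{theorem.RT2.low2} to the oracle $C$: the statement actually needed is that for any $C$-computable coloring of pairs there is an infinite homogeneous set $H$ with $H'' \le_T C''$, which then collapses to ordinary $low_2$-ness of $H$ precisely because $C$ is $low_2$ to begin with. This is a standard relativization of the Cholak--Jockusch--Slaman construction in \cite{CJS}, but one should verify that the uniformity of their argument survives. Notice that stability of the induced coloring $d$ is \emph{not} required for this lemma, although the authors observe that in the intended application $d$ turns out to be stable, allowing the use of just the stable case of Theorem \ref{theorem.RT2.low2}.
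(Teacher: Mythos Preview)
Your approach is essentially the paper's: define the cross-block coloring $d$ and the within-block coloring (your $e$, the paper's $f_2$), thin once by each, and take the union of the surviving blocks. The only structural difference is that you pigeonhole on $e$ first and then apply relativized $\RT^2$ to $d\upto[H_0]^2$, whereas the paper applies relativized $\RT^2$ to $d$ first and then pigeonholes on $f_2$ inside the resulting $H$. Either order works.

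There is, however, a real gap in your $low_2$ bookkeeping. You state the relativization of Theorem~\ref{theorem.RT2.low2} as producing $H$ with $H'' \le_T C''$, conclude that $H$ is $low_2$, and then from $A \le_T H \oplus C$ infer that $A$ is $low_2$. That last inference fails: two $low_2$ sets need not have a $low_2$ join, so knowing separately that $H$ and $C$ are $low_2$ does not give $(H \oplus C)'' \le_T \emptyset''$. The correct relativization of Cholak--Jockusch--Slaman yields an $H$ that is $low_2$ \emph{over} $C$, i.e.\ $(H \oplus C)'' \le_T C''$; this is exactly what the paper invokes, and it is what your argument actually needs. With that one correction your proof goes through.
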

\begin{proof}
Let $C\{Y_i\}= X\oplus W_X$ be the code for an infinite pre-semi-homogeneous sequence of blocks $\{Y_i\}_{i\in\N}$.  
Note that the sequence $\{Y_i\}$ 
	induces a coloring $d:[\N]^2\rightarrow\{1,\dots,k\}$, 
	where $d(i,j)=f(\min Y_i,\min Y_j)$ for each $\{i,j\}\in[\N]^2$.

The induced coloring $d$ is computable from $C\{Y_i\}$.
By Theorem \ref{theorem.RT2.low2} relativized to $C\{Y_i\}$, we obtain a homogeneous set $H$ such that $(H\oplus C\{Y_i\})''\leq_T (C\{Y_i\})''$.  

Note that $\bigcup_{i\in H} Y_i$ is a $H\oplus C\{Y_i\}$-computable packed set, with a single color that is assigned by $f$ to any pair which is not contained in a single block.  

For each $i\in H$, let $f_2(i)$ be the color assigned by $f$ to each/any pair of elements in $Y_i$. 
By the pigeonhole principle applied to $f_2$, we obtain an infinite $H\oplus C\{Y_i\}$-computable set $I\subseteq H$ such that $A=\bigcup_{i\in I} Y_i$ is packed and semi-homogeneous.  
Note that $A$ is $H\oplus C\{Y_i\}$-computable.
Because $C\{Y_i\}$ is $low_2$ and because $(H\oplus C\{Y_i\})''\leq_T (C\{Y_i\})''$, we see that $A''\leq_T \emptyset''$.  Thus $A$ is $low_2$, as desired.
\end{proof}

\subsection{Building a $low_2$ sequence of blocks}
We will build the desired sequence $\{Y_i\}$ by Mathias forcing.
For convenience, we define `pre-conditions' (which have computable definitions) and `conditions' (which are pre-conditions with $low$ sets that satisfy a certain $\Pi^0_2$ property).

Our pre-conditions have the form $(\tau,D,W_D,L)$ 
	where $\tau\in k^{<\N}$ is a string, $D$ and $W_D$ are finite sets, and $L$ is a (possibly infinite) set.

\begin{defn}[Pre-conditions]
Let $p=(\tau,D,W_D,L)$ with $W_D=\{w_0<w_1<\dots<w_l\}$.
For each $i$ such that\ $1\leq i\leq l$, set $Y_i:=D\cap(w_{i-1},w_i]$.  \par
We say that $p$ is a \emph{pre-condition} if 
(1) $D=\bigcup_{i\leq l} Y_i$,
(2) $\{Y_i\}_{i\leq l}$ is pre-semi-homogeneous for $f$,
(3) $|Y_i|\geq \phi(w_i)$, 
(4) $w_l\leq |\tau|<\min(L)$
(hence each $Y_i$ is in the domain of $\tau\in k^{<\N}$), and
(5) each $Y_i$ is $f$- and $\tau$-homogeneous.
\end{defn}

Informally, we use $\tau(x)$ to keep track of the color we have committed to assigning all large enough numbers with $x$. 
More formally, we will ensure that each element $x$ of a block that is added to $D$ at stage $i$ will be given the color $\tau(x)$ with each element of any block which is added to $D$ at any stage $j>i$.

During the construction, we will first choose a finite extension of $\tau$, then pick a finite number of blocks $Y_i$ which are homogeneous for $\tau$.
This is a key difference between the proof in this section and that in the previous section: 
here we interleave extending the initial segment of the helper function and extending the sequence of blocks $\{Y_i\}$.

Recall that $D\oplus W_D$ is the code for the finite sequence of blocks built so far.
The next definition says that 
	our later promises must extend our earlier promises,
	that we can only add new blocks $Y_i$ on to the end of the sequence of blocks built so far,
	that we can only add $Y_i\subseteq L$,
	and that we can only remove elements from $L$.

\begin{defn}[Extending pre-conditions]
Given any two pre-conditions  $p=(\tau,D,W_D,L)$ and $\hat{p}=(\hat{\tau},\hat{D},\hat{W}_{\hat{D}},\hat{L})$, 
We say that $\hat{p}$ \emph{extends} $p$, written $\hat{p}\sqsupseteq p$, if the following hold:
	$\hat{\tau}\succeq\tau$,
	$D\subseteq \hat{D}\subset D\cup L$ and 
			$W_D\subseteq \hat{W}_{\hat{D}}\subset W_D\cup L$, and
	$\hat{L}\subseteq L$.
\end{defn}

For convenience, we often write $p$ to denote $(\tau,D,W_D,L)$.  
By decorating $p$ with different hats, we will denote different pre-conditions.  
For example, $\hat{p}$ always denotes $(\hat{\tau},\hat{D},\hat{W}_{\hat{D}},\hat{L})$.

\begin{defn}[Conditions]
A pre-condition $p=(\tau,D,W_D,L)$ is a \emph{condition} if 
	$L$ is large, 
	$L$ is $low$, and 
	$L\subseteq \{y> |\tau|: (\forall x\leq |\tau|)[f(x,y)=\tau(x)]\}$. 
\end{defn}

When $f$ is not a stable coloring, there are often many ways to extend $\tau$.  
We will use the following tree to organize these options.

\begin{defn} Fix a pre-condition $p=(\tau,D,W_D,L)$.  We define the tree of possible extensions of $\tau$ (written $T^L$ for short) by 
$$\sigma\in T^L \iff \left\{y \in L: y>|\sigma|\land (\forall x\leq |\sigma|)[f(x,y)=\sigma(x)]\right\}\text{ is large}.$$
\end{defn}

Because the definition of ``large'' is $\Pi^0_2$, it follows that $T^L$ is $\Pi^{0,L}_2$ definable.  
Indeed, note that there is a single $\Pi^{0,L}_2$ definition with parameter $L$ that defines $T^L$ for any pre-condition $p$.

If $L$ is large, then $\lambda\in T^L$ and Lemma \ref{lemma.unionlarge.2} implies that $T^L$ has no dead ends.
Thus if $p=(\tau,D,W_D,L)$ is a condition, then $T^L$ is an infinite tree and $\tau\in T^L$. 

The construction of our $low_2$ pre-semi-homogeneous sequence has three modules, each discussing how to extend a given condition $p$.  
Two modules will allow us to extend $p$ to force $\Phi^{C\{Y_i\}}_e(e)$ to either converge or diverge.
The third module will allow us to extend $p$ by a single block.  
Applying the third module infinitely many times will ensure that $\{Y_i\}$ is an infinite sequence of blocks.

Given $e$ and $p$, we will use the following $\Pi^{0,L}_1$ class to determine whether to force convergence or divergence.
Recall that for any condition $p=(\tau,D,W_D,L)$, there is a unique $t$ such that $D=Y_1\cup\dots\cup Y_t$  and $W_D=\{w_0,\dots,w_t\}$ for some choice of $Y_i\subseteq (w_{i-1},w_i]$.

\begin{defn}[The $\Pi^{0,L}_1$ class] 
If $g\in k^{\N}$, then $g\in \mathcal{U}^p_e$  if and only if  $g\succ\tau$ and
\begin{align*}
(\forall l\geq 1)&(\forall w_{t+l}>\dots >w_t=|\tau|)(\forall Y_{t+1},\dots, Y_{t+l} \text{ s.t.\ }Y_{t+i}\subseteq(w_{t+i-1},w_{t+i}]\cap L)\\
	 [&\text{If } \{Y_{t+i}\}_{i\leq l} \text{ is pre-semi-homogeneous for } f \text{, and } \\
	  &\quad \text{for each }i\leq l,\ |Y_{t+i}|\geq \phi(w_{t+i})\text{ and } Y_{t+i} \text{ is } g \text{ homogeneous}, \\
	 &\text{then } \Phi^{(D\cup Y_{t+1}\cup\dots\cup Y_{t+l})\oplus( W_D\cup\{w_{t+1},\dots,w_{t+l}\})}_{e}(e)\uparrow].
\end{align*}
\end{defn}

For any condition $p$ and for any $e$, there are two possible cases.  
Either $\mathcal{U}^p_e$ contains a function $g$, or it $\mathcal{U}^p_e$ is empty.
If $\mathcal{U}^p_e$ contains any function $g$, 
	we will be able to extend $p$ to a condition $\hat{p}$ which forces forces divergence.

\begin{lemma}[Forcing divergence]
\label{lemma.PRT2.diverge}
Let $P\gg\zero'$, $e\in\N$, and let $p$ be a condition such that $\mathcal{U}^p_e\neq\emptyset$. 
Then we can $P$-uniformly extend $p$ to a condition $\hat{p}$ such that $(\forall \tilde{p}\sqsupseteq\hat{p})\ \Phi^{\tilde{D}\oplus \tilde{W}_{\tilde{D}}}_{e}(e)\uparrow$.  
\end{lemma}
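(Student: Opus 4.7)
The plan is to use $\paset$ to extract a low member $g$ of the $\Pi^{0,L}_1$ class $\mathcal{U}^p_e$, and then refine $L$ down to the subset whose elements are all colored by $g$ with a single, carefully chosen color $c$. This single-color refinement guarantees that every block ever added in an extension of $\hat{p}$ is automatically $g$-homogeneous, which is precisely the hypothesis needed to invoke $g\in\mathcal{U}^p_e$ and conclude that $\Phi_e^{\tilde{D}\oplus\tilde{W}_{\tilde{D}}}(e)$ diverges.

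First I would unwind the definition of $\mathcal{U}^p_e$ to confirm that it is a $\Pi^{0,L}_1$ subclass of $k^{\N}$: the inner assertion $\Phi_e^{\ldots}(e)\uparrow$ is $\Pi^0_1$ in a finite oracle, the outer universal quantifiers range over $l\in\N$, finite tuples $w_{t+1}<\dots<w_{t+l}$, and finite candidate blocks $Y_{t+1},\dots,Y_{t+l}$, and the only appeal to $L$ is the clause $Y_{t+i}\subseteq L$, which is $\Delta^{0,L}_1$. Since $L$ is $low$ and $\paset\gg\zero'$, we have $\paset\gg L$, so the uniform Low Basis Theorem (with $L$'s lowness index as parameter) returns $\paset$-uniformly a lowness index for some $g\in\mathcal{U}^p_e$; because $L$ itself is low, this $g$ is low in the absolute sense as well, and so is $L\oplus g$.

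With such a $g$ in hand, I would partition $L$ into the $k$ pieces $L^{(c)}=\{y\in L:g(y)=c\}$. By Lemma \ref{lemma.unionlarge.2}, at least one $L^{(c)}$ is large. Since $L\oplus g$ is low, each statement ``$L^{(c)}$ is large'' is $\Pi^{0,L\oplus g}_2=\Pi^0_2$, so the $k$-branching analogue of Lemma \ref{lemma.choosePi2} allows $\paset$ to uniformly select a $c$ for which it holds. Set $\hat{L}:=L^{(c)}$ and $\hat{p}:=(\tau,D,W_D,\hat{L})$. Then $\hat{L}$ is large by choice of $c$, low because it is computable from $L\oplus g$, and is contained in $\{y>|\tau|:(\forall x\leq|\tau|)\,f(x,y)=\tau(x)\}$ since $\hat{L}\subseteq L$. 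Hence $\hat{p}$ is a condition, and evidently $\hat{p}\sqsupseteq p$.

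For the divergence verification, suppose $\tilde{p}\sqsupseteq\hat{p}$ has new blocks $Y_{t+1},\dots,Y_{t+l}$ and markers $\tilde{w}_{t+1}<\dots<\tilde{w}_{t+l}$. By the extension relation $Y_{t+i}\subseteq\hat{L}=L^{(c)}$, so each $Y_{t+i}$ is $g$-homogeneous with color $c$. The remaining hypotheses of $\mathcal{U}^p_e$---the pre-semi-homogeneity of $\{Y_{t+i}\}$ for $f$, the containments in $L$, and the size bound $|Y_{t+i}|\geq\phi(\tilde{w}_{t+i})$---are all built into the notion of a pre-condition. Applying $g\in\mathcal{U}^p_e$ then gives $\Phi_e^{\tilde{D}\oplus\tilde{W}_{\tilde{D}}}(e)\uparrow$, as required. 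The main obstacle is the $\paset$-uniform bookkeeping for lowness indices: one must propagate indices from $L$ to $g$ via the uniform Low Basis Theorem, and then from $L\oplus g$ to $\hat{L}=L^{(c)}$, with the color $c$ itself chosen by an auxiliary $\Pi^0_2$-selection. The rest is routine once the idea of refining by $g$-color is in place.
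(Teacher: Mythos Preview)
Your proposal is correct and follows essentially the same route as the paper: obtain a low $g\in\mathcal{U}^p_e$ via the uniform Low Basis Theorem, partition $L$ by $g$-color, use $\paset$ and Lemma~\ref{lemma.choosePi2} to select a large piece $\hat{L}=L\cap g^{-1}(c)$, and set $\hat{p}=(\tau,D,W_D,\hat{L})$. Your added verification that $\mathcal{U}^p_e$ is $\Pi^{0,L}_1$ and your explicit unpacking of why every future block lies in $\hat{L}$ (hence is $g$-homogeneous) simply spell out details the paper leaves implicit.
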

\begin{proof}
For simplicity, we will write $\mathcal{U}=\mathcal{U}^p_e$.
Suppose that $\mathcal{U}\neq\emptyset$.
Recall that the condition $p$ has the form $p=(\tau,D,W_D,L)$ where $L$ is a $low$ set given together with a $low$-ness index. 
By the Low Basis Theorem, there is some $g\in\mathcal{U}$ which is $low$ over $L$.  
By the uniform proof of the Low Basis Theorem, $g$ can be found $\paset$-uniformly, 
	along with an index witnessing that $g$ is $low^L$. 
Because $L$ is $low$, $L\oplus g$ is $low$.

The sets $L\cap g^{-1}(c)$ for $c\in\{1,\dots,k\}$ partition the large set $L$. 
For each such $c$, $L\cap g^{-1}(c)$ is computable from $L\oplus g$, so is $low$.  

By Lemma \ref{lemma.unionlarge.2}, there is a $c\in\{1,\dots,k\}$ such that\ $L\cap g^{-1}(c)$ is large. 
This statement is $\Pi^{0,L\oplus g}_2$.  
Because $L\oplus g$ is $low$, this statement is $\Pi^0_2$.  
Therefore, because $\paset\gg\emptyset'$, we can $\paset$-uniformly select one of these sets which is large.  

Let $\hat{L}=L\cap g^{-1}(c)$ for the $c$ selected above,
and set $\hat{\tau}=\tau$, $\hat{D}=D$, and $\hat{W}_{\hat{D}}=W_D$.  
By our choice of $c$, $\hat{L} \subseteq \{y> |\tau|: (\forall x\leq |\tau|)[f(x,y)=\tau(x)]\}$, and $\hat{L}$ is large and $low$.  
Indeed, we can $P$-uniformly find a $low$-ness index for $\hat{L}$ using the $low$-ness index of $L\oplus g$ along with the definition of $\hat{L}$.
In summary, $\hat{p}=(\hat{\tau},\hat{D},\hat{W}_{\hat{D}},\hat{L})$ is a condition extending $p=(\tau,D,W_D,L)$.

The definition of $g\in\mathcal{U}$ and the $g$-homogeneity of $\hat{L}$ ensures that no future initial segment of $X\oplus W$ will cause $\Phi^{X\oplus W}_e(e)$ to converge.  
We have thus forced that $\Phi^{X\oplus W}_e(e)\uparrow$.  
\end{proof}

The other possibility is that $\mathcal{U}^p_e=\emptyset$.
In this case, we can extend $p$ to a condition that forces convergence.

\begin{lemma}[Forcing convergence]
\label{lemma.PRT2.converge}
Let $\paset\gg\zero'$ and let $p$ be a condition s.t.\ that $\mathcal{U}^p_e=\emptyset$.
Then we can $\paset$-uniformly extend $p$ to a condition $\hat{p}$ such that $\Phi^{\hat{D}\oplus \hat{W}_{\hat{D}}}_{e}(e)\downarrow$.
Thus, $(\forall\tilde{p}\sqsupseteq\hat{p})\ \Phi^{\tilde{D}\oplus \tilde{W}_{\tilde{D}}}_{e}(e)\downarrow$.
\end{lemma}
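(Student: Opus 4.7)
The plan is to combine a compactness argument on the $\Pi^{0,L}_1$ class $\mathcal{U}^p_e$ with $\paset$-uniform extension along the $\Pi^0_2$ tree $T^L$. Observe that $\mathcal{U}^p_e\subseteq k^{\N}$ is $\Pi^{0,L}_1$: its definition universally quantifies over finite tuples with $L$-computable constraints, and the innermost clause $\Phi_e(e)\uparrow$ is $\Pi^0_1$. By the lowness of $L$, the assertion $\mathcal{U}^p_e=\emptyset$ is therefore $\Sigma^0_1$. By K\"onig's Lemma on the $L$-computable tree of strings $\sigma\succeq\tau$ that carry no finite witness of convergence, emptiness of $\mathcal{U}^p_e$ is equivalent to the existence of a level $N$ such that every $\sigma\in k^N$ extending $\tau$ carries a concrete finite witness: a tuple $(l,w_{t+1}<\cdots<w_{t+l}\leq N,Y_{t+1},\ldots,Y_{t+l})$ with $Y_{t+i}\subseteq(w_{t+i-1},w_{t+i}]\cap L$ (and $w_{t+i}\in L$, which can be arranged by adjusting each $w_{t+i}$ upward within $L$) satisfying the pre-semi-homogeneity and block-size requirements for $\sigma$ and making $\Phi^{(D\cup\bigcup_i Y_{t+i})\oplus(W_D\cup\{w_{t+1},\ldots,w_{t+l}\})}_e(e)\downarrow$. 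Because $\paset\gg\zero'$, $\paset$ locates such an $N$ and, for each candidate $\sigma$, the least witness by a bounded search.

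Next, I would $\paset$-uniformly extend $\tau$ to a string $\hat{\tau}\in T^L$ of length $N$. Since $T^L$ is $\Pi^{0,L}_2$, hence $\Pi^0_2$ by lowness of $L$, and has no dead ends by the analog of Lemma \ref{lemma.unionlarge.2}, iterating the $k$-ary strengthening of Lemma \ref{lemma.choosePi2} lets $\paset$ select a correct child at each step. Let $(l,w_{t+1},\ldots,w_{t+l},Y_{t+1},\ldots,Y_{t+l})$ be the guaranteed witness for $\hat{\tau}$, and set $\hat{D}=D\cup Y_{t+1}\cup\cdots\cup Y_{t+l}$ and $\hat{W}_{\hat{D}}=W_D\cup\{w_{t+1},\ldots,w_{t+l}\}$. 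The witness directly certifies $\Phi^{\hat{D}\oplus\hat{W}_{\hat{D}}}_e(e)\downarrow$; pre-semi-homogeneity of the combined sequence of blocks follows from the witness together with pre-condition~(5) for $p$ and the fact that $Y_{t+j}\subseteq L\subseteq\{y:(\forall x\leq|\tau|)[f(x,y)=\tau(x)]\}$, and each old $Y_i$ remains $\hat{\tau}$-homogeneous because $\hat{\tau}\upto|\tau|=\tau$.

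Finally, let $M$ bound both $|\hat{\tau}|$ and the oracle use of the converging computation, and set $\hat{L}=L\cap\{y>M:(\forall x\leq|\hat{\tau}|)[f(x,y)=\hat{\tau}(x)]\}$. Then $\hat{L}$ is $low$ as an $L$-computable modification of $L$, and it remains large because $\hat{\tau}\in T^L$ certifies largeness of $L\cap\{y>|\hat{\tau}|:(\forall x\leq|\hat{\tau}|)[f(x,y)=\hat{\tau}(x)]\}$ and the defining quantifier of largeness already ranges over all thresholds, so removing a finite initial segment preserves it. Thus $\hat{p}=(\hat{\tau},\hat{D},\hat{W}_{\hat{D}},\hat{L})$ is a condition extending $p$. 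For any further extension $\tilde{p}\sqsupseteq\hat{p}$, the new elements adjoined to $\hat{D}$ come from $\hat{L}$ and therefore exceed $M$, so $\tilde{D}\oplus\tilde{W}_{\tilde{D}}$ agrees with $\hat{D}\oplus\hat{W}_{\hat{D}}$ on all bits queried by the computation, giving $\Phi^{\tilde{D}\oplus\tilde{W}_{\tilde{D}}}_e(e)\downarrow$. I expect the main obstacle to be the simultaneous juggling of the $\Pi^{0,L}_2$ tree $T^L$ and the $\Pi^{0,L}_1$ class $\mathcal{U}^p_e$, and in particular arranging that the witness tuple's $w_{t+i}$'s lie in $L$ so that the extension $\hat{p}\sqsupseteq p$ is legitimate; the $M$-truncation of $\hat{L}$ is the essential device that converts one-time convergence into forced convergence under all further extensions.
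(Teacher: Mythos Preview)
Your proposal is correct and takes essentially the same approach as the paper: use the emptiness of $\mathcal{U}^p_e$ together with $P$-computable extension along the $\Pi^0_2$ tree $T^L$ to locate a string $\hat\tau\succeq\tau$ in $T^L$ admitting a finite convergence witness, then truncate $L$ above the use to lock in the computation. The only cosmetic difference is that you make the compactness step explicit (first find a uniform level $N$ at which every extension of $\tau$ carries a witness, then extend to that level inside $T^L$), whereas the paper interleaves the two, extending one step at a time along $T^L$ and checking for a witness after each step; you also flag the technicality that the new $w_{t+i}$ must lie in $L$, which the paper leaves implicit.
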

\begin{proof}
For simplicity, we will write $\mathcal{U}=\mathcal{U}^p_e$.
Suppose $\mathcal{U}=\emptyset$.
Recall that $T^L$ has no dead ends and that $\tau\in T^{L}$ because $(\tau,D,W_D,L)$ is a condition.
In particular, $T^L$ is infinite, and no path through $T^L$ is in $\mathcal{U}$.

Note that $T^{L}$ is $\Pi^0_2$ because $T^{L}$ is $\Pi^{0,L}_2$ and because $L$ is $low$.
Uniformly in any $\paset\gg\zero'$, we can compute longer and longer (comparable) strings in ${T^{L}}$ which extend $\tau$.  
Because $\mathcal{U}$ is empty, 
	we will eventually compute a string $\hat{\tau}\in{T^{L}}$, 
	a pre-semi-homogeneous sequence of blocks $Y_{t+1},\dots,Y_{t+l}\subset\N$, 
	and dividers $w_{t+1}<\dots<w_{t+l}\leq |\hat{\tau}|$ which witness 
	$\Phi_e^{(\cdots)\oplus(\cdots)}(e)\downarrow$.
Set $\hat{D}=D\cup Y_{t+1}\cup\dots\cup Y_{t+l}$ and $\hat{W}_{\hat{D}}=W_D\cup\{w_{t+1},\dots,w_{t+l}\}$.
Let $u$ be larger than all numbers appearing so far 
	(including the use of the computation and $|\hat{\tau}|$), 
	and set  $\hat{L}=L\cap\{y\geq |\hat{\tau}|: (\forall x\leq |\hat{\tau}|)[\hat{\tau}(x)=f(x,y)]\}\cap\{y:y\geq u\}$.  

$\hat{L}$  remains $low$ because $\{y\geq |\hat{\tau}|: (\forall x\leq |\hat{\tau}|)[\hat{\tau}(x)=f(x,y)]\}\cap\{y:y\geq u\}$ is a uniformly computable set.
Indeed, we can $P$-uniformly find an index witnessing the lowness of $\hat{L}$ by using the $low$-ness index of $L$ together with the uniform definition of $\hat{L}$.
$\hat{L}$ remains large because $\hat{L}=^{*} L\cap\{y> |\hat{\tau}|: (\forall x\leq |\hat{\tau}|)[\hat{\tau}(x)=f(x,y)]\}$, which is large by our choice of $\hat{\tau}\in T^L$. 
In short, $(\hat{\tau},\hat{D},\hat{W}_{\hat{D}},\hat{L})$ is a condition extending $(\tau,D,W_D,L)$.
We have thus made progress toward our infinite pre-semi-homogeneous sequence of blocks, and we have forced that $\Phi^{\hat{D}\oplus\hat{W}_{\hat{D}}}_e(e)\downarrow$.
\end{proof}

Our last lemma shows that you can extend any condition $p$ to a condition $\hat{p}$ where $\hat{p}$ has one more block than $p$.

\begin{lemma}[Adding one block]
\label{lemma.PRT2.onemore}
Let $\paset\gg\zero'$ and let $p$ be any condition.  
You can $\paset$-uniformly extend $p$ to a condition $\hat{p}$ with $\hat{D}\supsetneq D$ and $\hat{W}_{\hat{D}}\supsetneq W_D$.  
\end{lemma}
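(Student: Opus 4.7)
The plan is to mirror the strategy of Lemma \ref{lemma.PRT2.converge}, but since I need not force convergence of any $e$-th computation, no $\Pi^{0,L}_1$ class hunting is required. I will extend $\tau$ along $T^L$ far enough to accommodate one new block, then use the largeness of $L$ to extract that block. First, I would invoke the $\Pi^{0,L}_2$ definition of largeness of $L$ with $m := |\tau|$ and $p := k$ to produce a witness $w$ such that for every $\rho \in k^{w}$ there is a set $Y \subseteq L \cap (|\tau|, w]$ of size $\geq \phi(w)$ homogeneous for both $f$ and $\rho$. Because $L$ is $low$, the property singling out such a $w$ is $L$-computable (a bounded search over the finitely many $\rho \in k^{w}$ and the finite set $L \cap (|\tau|, w]$), and $\paset \geq_T L$, so $\paset$ locates such a $w$.

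Next, I would extend $\tau$ along $T^L$ up to length $w$. Since $L$ is $low$, the predicate defining $T^L$ collapses to a genuine $\Pi^0_2$ predicate, and by Lemma \ref{lemma.unionlarge.2} the tree $T^L$ has no dead ends above $\tau$. I would apply the $k$-alternative form of Lemma \ref{lemma.choosePi2} at each stage: the disjunction ``$\sigma\cat i \in T^L$ for some $i \in \{1,\dots,k\}$'' is a true $k$-fold disjunction of $\Pi^0_2$ sentences, so $\paset \gg \zero'$ $\paset$-uniformly selects a valid $i$. Iterating $w - |\tau|$ many times yields $\hat{\tau} \in T^L$ of length exactly $w$. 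This $\paset$-uniform navigation of the $\Pi^0_2$ tree $T^L$ is the only nontrivial step in the construction, and is where the hypothesis $\paset \gg \zero'$ is essential.

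Finally, I would plug $\rho := \hat{\tau}$ into the largeness witness from the first step to locate $Y_{t+1} \subseteq L \cap (|\tau|, w]$ of size $\geq \phi(w)$, homogeneous for both $f$ and $\hat{\tau}$. I would then set $w_{t+1} := w$, $\hat{D} := D \cup Y_{t+1}$, $\hat{W}_{\hat{D}} := W_D \cup \{w_{t+1}\}$, and
$$\hat{L} := L \cap \{y > |\hat{\tau}| : (\forall x \leq |\hat{\tau}|)[f(x,y) = \hat{\tau}(x)]\}.$$
The definition of $T^L$ ensures $\hat{L}$ is large, and $\hat{L}$ is uniformly $L$-computable from the finite parameter $\hat{\tau}$, so it remains $low$ with a $\paset$-uniformly recoverable lowness index. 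Verifying that $\hat{p} = (\hat{\tau}, \hat{D}, \hat{W}_{\hat{D}}, \hat{L})$ is a condition extending $p$ is then routine: pre-semi-homogeneity between each old $Y_i$ and $Y_{t+1}$ is automatic because $Y_{t+1} \subseteq L$ and the condition $p$ commits $f(x,y) = \tau(x)$ for every $x \leq |\tau|$ and $y \in L$, while $Y_{t+1}$ itself is $f$- and $\hat{\tau}$-homogeneous by construction.
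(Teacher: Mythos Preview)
Your proposal is correct and follows essentially the same approach as the paper: find a largeness witness $w$ for $m=|\tau|$ and $p=k$, extend $\tau$ along $T^L$ to a string $\hat\tau$ of length $w$, extract the block $Y$ homogeneous for $f$ and $\hat\tau$, and set $\hat L = L\cap\{y>|\hat\tau|:(\forall x\le|\hat\tau|)[f(x,y)=\hat\tau(x)]\}$. You are simply more explicit than the paper about the computational mechanisms---that locating $w$ is an $L$-computable bounded search and that navigating $T^L$ step by step is accomplished via the $k$-fold form of Lemma~\ref{lemma.choosePi2}---and you spell out the pre-semi-homogeneity verification that the paper leaves implicit.
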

\begin{proof}
Let $p=(\tau,D,W_D,L)$ be any condition.
By definition, $L$  is large.  

Applying the definition of largeness with $p=k$ and $m=|\tau|$, gives a $w$ such that for any $\rho\in k^{w}$ there is a block $Y\subseteq(m,w]\cap L$ with $|Y|\geq\phi(w)$ which is homogeneous for $\rho$ and $f$. 

Because ${T^{L}}$ contains $\tau$ and has no dead ends, it contains a string $\hat{\tau}\succeq\tau$ of length $w$.  
Take $Y\subseteq(m,w]\cap L$ to be the block with $|Y|\geq\phi(w)$ that is homogeneous for $\hat{\tau}$ and $f$. 

Define $\hat{L} = L\cap\{y> |\hat{\tau}|: (\forall x\leq |\hat{\tau}|)[\hat{\tau}(x)=f(x,y)]\}$.  
This set is large by the definition of $\hat{\tau}\in T^L$. 
Note that $\hat{L}$ is $low$ because it computable from $L$.
Define $\hat{D}= D\cup Y$ and $\hat{W}_{\hat{D}}=W_D\cup\{w\}$. 
Then $\hat{p}=(\hat{\tau},\hat{D},\hat{W}_{\hat{D}},\hat{L})$ is the desired condition extending $p$.
\end{proof}

We can now prove the main theorem of the section.
Working relative to a set $B\subseteq\N$, we say that $X$ is $low_n^B$ if $(X\oplus B)^{(n)}\leq_T B^{(n)}$.

\begin{theorem}\label{thm.PRT2.low2}
Every computable instance of $\PRT^2_k$ has a $low_2$ solution.
\end{theorem}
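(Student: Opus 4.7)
The plan is a Cholak--Jockusch--Slaman style Mathias forcing construction, assembled from the three extension modules above (Lemmas \ref{lemma.PRT2.diverge}, \ref{lemma.PRT2.converge}, and \ref{lemma.PRT2.onemore}).  The aim is to build a $low_2$ code $C\{Y_i\}=X\oplus W_X$ of an infinite pre-semi-homogeneous sequence of blocks, and then invoke Lemma \ref{lemma.uselow2code} to convert it into a $low_2$ packed semi-homogeneous set.

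First I would use the relativized Low Basis Theorem to fix a set $\paset\gg\zero'$ that is additionally \emph{low over $\zero'$}, i.e.\ $\paset'\leq_T\zero''$; such a $\paset$ exists because the $\Pi^{0,\zero'}_1$ class of completions of PA relative to $\zero'$ has a member that is low over $\zero'$, and every such member is $\gg\zero'$.  Starting from the trivial condition $p_0=(\lambda,\emptyset,\emptyset,\N)$---a condition by Lemma \ref{lemma.Nlarge}---I would build $p_0\sqsupseteq p_1\sqsupseteq\cdots$ $\paset$-uniformly, carrying a lowness index for the reservoir at each stage.  The stages alternate: at each odd stage $2s+1$ I apply Lemma \ref{lemma.PRT2.onemore} to append one new block, which guarantees that the final sequence of blocks is genuinely infinite.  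At each even stage $2e$ I decide the $e$-th jump coordinate: because the current reservoir $L$ is low, the tree $T_{\mathcal{U}^{p_{2e-1}}_e}$ is $L$-computable, so the question ``$\mathcal{U}^{p_{2e-1}}_e=\emptyset$?'' is $\Sigma^{0,L}_1$, hence decidable by $\zero'$ (which $\paset$ computes).  If the class is empty, apply Lemma \ref{lemma.PRT2.converge} to force $\Phi^{C\{Y_i\}}_e(e)\downarrow$; otherwise apply Lemma \ref{lemma.PRT2.diverge} to force $\Phi^{C\{Y_i\}}_e(e)\uparrow$.

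Because the construction is fully $\paset$-uniform, the resulting code satisfies $C\{Y_i\}\leq_T\paset$.  Crucially, at each even stage $\paset$ has irrevocably settled the $e$-th question about the jump of $C\{Y_i\}$, so we in fact obtain $(C\{Y_i\})'\leq_T\paset$.  Taking one more jump and using $\paset'\leq_T\zero''$ then yields $(C\{Y_i\})''\leq_T\paset'\leq_T\zero''$, so $C\{Y_i\}$ is $low_2$.  Feeding this into Lemma \ref{lemma.uselow2code} produces the desired $low_2$ set $A$ that is packed for $\phi$ and semi-homogeneous for $f$.

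The main obstacle is the careful uniform bookkeeping: at each inductive step one must verify that the extended reservoir $\hat{L}$ remains both large and low, and produce a fresh lowness index for $\hat{L}$ $\paset$-uniformly from the index of $L$ together with the parameters of the extension.  The three extension lemmas were designed with precisely these constraints in mind, so most of the work is an inductive assembly of their guarantees; the remaining subtlety lies in interleaving the block-addition and jump-forcing modules cleanly, so that each stage is applied to a genuine condition produced by the previous one, and so that every index $e$ is handled at exactly one even stage.
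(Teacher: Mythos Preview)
Your proposal is correct and follows essentially the same Cholak--Jockusch--Slaman forcing argument as the paper: fix $\paset\gg\zero'$ low over $\zero'$, alternate the block-adding module (Lemma~\ref{lemma.PRT2.onemore}) with the jump-forcing modules (Lemmas~\ref{lemma.PRT2.diverge} and~\ref{lemma.PRT2.converge}), conclude $(C\{Y_i\})'\leq_T\paset$ and hence $C\{Y_i\}$ is $low_2$, and finish with Lemma~\ref{lemma.uselow2code}. The only slips are cosmetic: your chain should read $p_0\sqsubseteq p_1\sqsubseteq\cdots$ rather than $p_0\sqsupseteq p_1\sqsupseteq\cdots$, and the paper initializes with $W_D=\{0\}$ rather than $\emptyset$ so that the pre-condition clauses involving $w_0$ are well-formed.
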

\begin{proof}
Fix any computable instance $f,\phi$ of $\PRT^2_k$ and 
	recall that the above definitions and lemmas were for an arbitrary computable instance of $\PRT^2_k$. 

Fix any $low^{\emptyset'}$ set $\paset$ s.t.\ $\paset\gg\emptyset'$.
We define $C\{Y_i\}$ by induction on the stage $s\in\N$.  
We begin with $s=0$ by setting $p_0=(\lambda,\emptyset,\{0\},\N)$.

Let $s>0$.
At stage $s=2e+1$ we force the $e^{th}$ jump. 
Deciding which jump forcing lemma to apply requires asking if a $\Pi^{0,L}_1$ class is nonempty.  
This can be rephrased as a $\Pi^{0,L}_1$ question, which can be answered uniformly by $\paset$.
Applying Lemma \ref{lemma.PRT2.diverge} or \ref{lemma.PRT2.converge} as appropriate, 
	we obtain the desired $p_s\sqsupseteq p_{s-1}$.

At stage $s=2e$ with $e>0$, we add a block to ensure $\{Y_i\}$ has at least $e$-many blocks.
Applying Lemma \ref{lemma.PRT2.onemore}, we obtain the desired condition $p_s\sqsupseteq p_{s-1}$.

This defines a $\paset$-uniform sequence of conditions $(\tau_i,D_i,W_{D_i},L_i)$.
From these conditions, we can $\paset$-uniformly recover a code $C\{Y_i\}$ for a sequence $\{Y_i\}$.  
Furthermore, the construction ensures that $\paset$ can compute the jump of the $C\{Y_i\}$.
Because $\paset$ is low over $\zero'$, it follows that $\zero''$ can compute the double jump of $C\{Y_i\}$.  
In other words, $C\{Y_i\}$ is $low_2$.
Applying Lemma~\ref{lemma.uselow2code}, we obtain a $low_2$ set that is packed for $\phi$ and semi-homogeneous for $f$, as desired.
\end{proof}

In fact, the above construction relativizes to any set $B\subseteq\N$.  
That is, for each $B$-computable $f:[\N]^2\rightarrow\{1,\dots,k\}$ 
	and each $B$-computable $\phi$ as in $\PRT^2_k$, 
	there is a $low_2^{B}$ set $A$ which is packed for $\phi$ and semi-homogeneous for $f$.
We leave the straightforward process of relativizing the proof to the reader.  
Iterating this result, we obtain an $\omega$-model of $\PRT^2_k$ with only $low_2$ sets.

\begin{corollary}
\label{cor.PRT2.notimp}
For each $k\in\omega$, there is an $\omega$-model of $\RCA+\PRT^2_k$ that is not a model of $\ACA$.
\end{corollary}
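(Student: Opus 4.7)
The plan is to build the desired $\omega$-model by iterating the relativized version of Theorem \ref{thm.PRT2.low2}, so that every set in the second-order part is $low_2$. Once that is achieved, $\ACA$ fails in the model because $\ACA$ proves the existence of $\emptyset'$, but $\emptyset'$ is not $low_2$ (that would require $\emptyset'''\leq_T \emptyset''$, contradicting the strictness of the jump hierarchy).

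First I would fix an enumeration of all pairs $(e,B)$ where $B$ is a set to be added to the model and $e$ codes a $B$-computable instance of $\PRT^2_k$. Starting from $B_0=\emptyset$, I would run a standard priority-free finite extension: at stage $s$ I take the least outstanding pair $(e,B_s)$ (where $B_s$ is the finite join of all sets previously added, using a dovetailing so every such pair is eventually considered), apply the $B_s$-relativization of Theorem \ref{thm.PRT2.low2} to obtain a solution $A_{s+1}$ with $(A_{s+1}\oplus B_s)''\leq_T B_s''$, and set $B_{s+1}=B_s\oplus A_{s+1}$. The second-order universe $\mathcal{S}$ of my model is then the collection of all sets Turing reducible to some $B_s$; this is closed under $\oplus$ and under Turing reducibility, so $(\N,\mathcal{S})$ satisfies $\RCA$. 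Since every $B$-computable instance of $\PRT^2_k$ with $B\in\mathcal{S}$ is handled at some stage and its solution is added to $\mathcal{S}$, the model satisfies $\PRT^2_k$.

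The key closure fact making everything work is that if $B$ is $low_2$ and $A$ is $low_2^B$, then $A\oplus B$ is again $low_2$: $(A\oplus B)''\leq_T B''\leq_T \emptyset''$. By induction on $s$, every $B_s$ is $low_2$, and hence $\mathcal{S}$ contains only $low_2$ sets, so $\emptyset'\notin\mathcal{S}$ and $\ACA$ fails. The only thing I would need to verify carefully is the promised relativization of Theorem \ref{thm.PRT2.low2}: for a $B$-computable instance, one runs the construction using a $\paset$ satisfying $\paset\gg B'$ and $\paset$ low over $B'$, with every occurrence of ``$low$'' replaced by ``$low$ over $B$,'' every $\Pi^0_2$ replaced by $\Pi^{0,B}_2$, and the Low Basis Theorem replaced by its relativization; the resulting code is $low_2$ over $B$ and Lemma \ref{lemma.uselow2code} (relativized) produces a $low_2^B$ solution. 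This relativization is routine and is the only place any real verification is needed; the rest of the argument is the standard recipe for turning a ``$low_n$ solutions exist'' theorem into an $\omega$-model separating the principle from $\ACA$.
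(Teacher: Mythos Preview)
Your proposal is correct and is essentially the same argument as the paper's: iterate and dovetail the relativized version of Theorem~\ref{thm.PRT2.low2} to build an $\omega$-model of $\RCA+\PRT^2_k$ whose second-order part consists only of $low_2$ sets, which therefore cannot satisfy $\ACA$. You have simply spelled out in more detail (the dovetailing, the closure under Turing reducibility, the inductive preservation of $low_2$-ness, and why $\emptyset'\notin\mathcal{S}$) what the paper compresses into a couple of sentences.
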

\begin{proof}
Iterating and dovetailing the relativized version of Theorem \ref{thm.PRT2.low2}, 
	we can produce an $\omega$-model of $\RCA+\PRT^2_k$ consisting only of $low_2$ sets.
At each stage, we add every set computable in the packed semi-homogeneous set, 
	ensuring that $\Delta^0_1$-comprehension holds. 
$\Sigma_1$-induction holds because the first order part is $\omega$.
\end{proof}

\section{Tools for proving $\PRT^n_k$}
\label{sect.tools-PRTn}

Our ultimate goal is to prove in Section \ref{sect.PRTn} that every computable instance of $\PRT^n_k$ 
	has a solution computable from each $\paset\gg\zero^{n-1}$.
We lay the groundwork of that proof in this section by introducing several background notions.  
We first introduce the special types of trees we will use to define helper colorings.  
We then give the appropriate analog of largeness, and prove the basic largeness lemmas.

As in Section \ref{sect.PRT2}, we will use helper functions to prove $\PRT^n_k$.
When $n>2$ we will need helper functions that are colorings of $[\N]^a$ for $a\in\{1,\dots,n-1\}$.
As before, we will define these helper colorings via initial segments.   

\begin{defn}
Let $k^{[<\N]^a}$ denote the set of all partial functions $\tau$ such that
$\tau:[\{1,\dots,w\}]^a\rightarrow \{1,\dots,k\}$ for some $w\in\N$.
If $\tau:[\{1,\dots,w\}]^a\rightarrow \{1,\dots,k\}$, we will call $w=|\tau|$ the \emph{length} of $\tau$. 
Given $\tau,\rho\in k^{[<\N]^a}$, we say that $\tau\preceq\rho$ if and only if (1) $|\tau|\leq |\rho|$ and (2) $\tau(Z)=\rho(Z)$ for each $Z\in[\{1,\dots,|\tau|\}]^a$.
\end{defn}

\begin{remark}
We will sometimes refer to a string $\tau\in k^{[\{1,\dots,w\}]^a}$ when $w<a$.  
In this case, $\dom(\tau)=\emptyset$.  
This has the strange, but not serious, consequence that the empty string $\lambda\in k^{[<\N]^a}$ has length $0$, $1$, \dots, and $a-1$.
\end{remark}

A tree $T\subseteq\N^{<\N}$ is \emph{$X$-computably bounded} if $T$ is $X$-computable and if there is an $X$-computable function $l:\N\rightarrow\N$ such that for each $w$, $l(w)$ bounds the code for each string in $T$ of length $w$. 
It is a standard observation that any $\paset\gg X$ can compute a path through each infinite $X$-computably bounded tree $T$.

Although $k^{[<\N]^a}$ is not $k$-branching, there is a computable function that bounds the strings of any given length $w$.  
Thus each $k^{[<\N]^a}$ is computably bounded.  

\begin{remark}
There are ${\binom{w+1}{a}-\binom{w}{a}}$ many sets in $[\{1,\dots,w+1\}]^a$ that are not in $[\{1,\dots,w\}]^a$. 
Therefore, each string in $k^{[\{1,\dots,w\}]^a}$ has exactly $k^{\binom{w+1}{a}-\binom{w}{a}}$ immediate successors in $k^{[\{1,\dots,w,w+1\}]^a}$.  
\end{remark}

Our motivation for working with subtrees of $k^{[<\N]^a}$ is the natural correspondence between colorings $g:[\N]^a\rightarrow\{1,\dots,k\}$ and elements of $k^{[\N]^a}$.

Our goal is to build a sequence of blocks $\{Y_i\}$ so that the color of $Z\in[\bigcup Y_i]^n$ depends only on how $Z$ is partitioned by the $Y_i$.  
When $n=2$, we built this sequence with the aid of the single helper function 
	that assigned $x$ the color it would be given with all big enough $y$.
When $n>2$, we will need $2^{n-1}-1$ helper colorings.  	
Therefore, when we select $Y$ we will need to ensure 
	that it is homogeneous 
	for each of the helper colorings $g_1,\dots,g_{2^{n-1}-1}$.

When $n=2$, the helper function was a map of numbers and large sets were sets of numbers.
Now, the helper functions will be maps of (up to) $n-1$-element sets and our large sets will be subsets $[\N]^{n-1}$.   

\subsection{Definitions and lemmas}
In the construction, we will define a helper coloring of exponent ${r_1}$ for each ordered tuple $(r_1,\dots,r_j)$ such that\ $r_1+\dots+r_j=n$ and $j>1$. 
Fix some enumeration of these $2^{n-1}-1$-many tuples.

For clarity, we will write $\numhelpers=2^{n-1}-1$ for the number of helper colorings.
We will write $a_i$ to refer to $1^{st}$ component of the $i^{th}$ tuple in our enumeration (which will be the exponent of the $i^{th}$ helper coloring).  
We can define $a_1,\dots,a_{\numhelpers}$ using any listing of the tuples that define the helper colorings. 

The above discussion suggests a $\Pi^1_1$ notion of largeness (quantifying over possible choices of the $g_i$). 
To make our constructions as effective as possible, we work with the following $\Pi^0_2$ notion of largeness:

\begin{defn} 
\label{defn.large.n}
Fix an instance $f,\phi$ of $\PRT^n_k$.
A set $L\subseteq [\N]^{n-1}$ is \emph{large} if 
$$\begin{array}{l@{}l@{}l@{}l}
(\forall m&)(\forall p_1&,\dots&,p_{\numhelpers}\in\N)\\
	&(\exists w)(&\forall \rho_1&,\dots,\rho_{\numhelpers}\ s.t.\ \rho_i\in {p_i}^{[\{1,\dots,w\}]^{a_i}})\\
	&	& [\exists Y&\subseteq (m,w]\ with\ [Y]^{n-1}\subset L\ s.t. \\
	&	& 	&|Y|\geq \phi(w),\\
	&	& 	&Y \text{ is homogeneous for }f, \text{ and}\\
	&	& 	&Y \text{ is homogeneous for each }\rho_i.]
\end{array}$$
We say $L$ is \emph{small} if $L$ is not large.
When $f,\phi$ is a computable instance of $\PRT^n_k$, 
	note that ``$L$ is large'' is a $\Pi^{0,L}_2$ statement.
\end{defn}

The next lemma will allow us to use our helper functions to extract a sequence of blocks.  

\begin{claim}
\label{claim.PRTn-inductive}
Fix any computable instance $f,\phi$ of $\PRT^n_k$ and any $\numhelpers$-many colorings $g_i:[\N]^{a_i}\rightarrow\{1,\dots,p_i\}$.
Suppose that $L\subseteq[\N]^{n-1}$ is large and $m\in\N$.
Then there exists $w\in\N$ and $Y\subseteq (m,w]$ such that\ $[Y]^{n-1}\subset L$, $|Y|\geq\phi(w)$, and $Y$ is $f$- and $g_i$-homogeneous for each $i$. 
\end{claim}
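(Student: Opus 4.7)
The plan is to mimic the proof of Claim \ref{claim.inductivestep}, which handled the $n=2$ case via a single helper coloring. The key observation there was that the $\Pi^0_2$ definition of largeness quantifies universally over all initial segments $\rho$, so one can feed in the restriction of the actual helper coloring $g$ to $\{1,\dots,w\}$ and conclude homogeneity for $g$ itself. The same device should work here, with the $\numhelpers$-many colorings $g_1,\dots,g_{\numhelpers}$ playing the role of the single $g$.

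Concretely, I would proceed as follows. Fix $p_1,\dots,p_{\numhelpers}$ to be the cardinalities of the codomains of $g_1,\dots,g_{\numhelpers}$, and apply Definition \ref{defn.large.n} to $L$ with these parameters and with the given $m$. This produces a $w\in\N$ with the property that \emph{every} tuple of initial segments $\rho_i\in p_i^{[\{1,\dots,w\}]^{a_i}}$ admits a suitable block. Now set $\rho_i = g_i \upto [\{1,\dots,w\}]^{a_i}$ for each $i\leq\numhelpers$. By the conclusion of largeness, there is a set $Y\subseteq(m,w]$ with $[Y]^{n-1}\subset L$, $|Y|\geq\phi(w)$, that is $f$-homogeneous and $\rho_i$-homogeneous for every $i$.

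Finally, I would observe that because $Y\subseteq\{1,\dots,w\}$, every $a_i$-element subset of $Y$ lies in $[\{1,\dots,w\}]^{a_i}$, so $\rho_i$ and $g_i$ agree on $[Y]^{a_i}$. Thus $\rho_i$-homogeneity of $Y$ is literally $g_i$-homogeneity of $Y$, giving the conclusion. There is no real obstacle here: the claim is essentially a translation of the universally-quantified $\rho_i$'s in the definition of largeness into the specific restrictions of the $g_i$'s, exactly as in the $n=2$ case. The only thing to be slightly careful about is bookkeeping the exponents $a_i$ of the various helper colorings and checking that each $g_i\upto[\{1,\dots,w\}]^{a_i}$ is well-defined (which is automatic as long as $w\geq a_i$; if necessary, we can first replace the $w$ given by largeness with $\max\{w,a_1,\dots,a_{\numhelpers}\}$, and reapply largeness at that new value of $m$ if needed).
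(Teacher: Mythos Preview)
Your proposal is correct and is essentially identical to the paper's proof: both feed the parameters $m$ and $p_1,\dots,p_{\numhelpers}$ into the definition of largeness, take the resulting $w$, set $\rho_i = g_i\upto w$, and read off the block $Y$. Your extra remark about ensuring $w\geq a_i$ is harmless but unnecessary, since for $w<a_i$ the string $\rho_i$ has empty domain and the homogeneity condition is vacuous.
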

\begin{proof}
Given $m$ and the $p_i$'s, let $w$ be as in the definition of largeness. 
Setting $\rho_i=g_i\upto w$ for each $i$, we obtain the desired set $Y$.
\end{proof}

In the next two lemmas, we verify that Definition \ref{defn.large.n} satisfies the two main properties of largeness: (1) the set of all $n-1$-element sets is large, and (2) any finite partition of a large set contains at least one large set. 
As before, our proofs are adaptations of the analogous proofs in \cite{ramseytype}, and are given here for completeness.

We begin with the analog of Claim 1 in \cite{ramseytype}.
\begin{lemma}
Fix any computable instance $f,\phi$ of $\PRT^2_k$.  
Then $[\N]^{n-1}$ is large.
\end{lemma}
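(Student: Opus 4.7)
The plan is to mimic the proof of Lemma~\ref{lemma.Nlarge}, but in the $n$-exponent setting one must defeat many helper colorings $\rho_1,\dots,\rho_{\numhelpers}$ of possibly different exponents $a_1,\dots,a_{\numhelpers}\in\{1,\dots,n-1\}$ simultaneously. Fix $m$ and $p_1,\dots,p_{\numhelpers}$. The idea is to choose $w$ so large that, from any subset of $(m,w]$ of size $\phi(w)-m$, we can extract an $n$-element subset that is simultaneously $\rho_i$-homogeneous for every $i$; then apply the hypothesis $w\to(\phi(w))^n_{k+1}$ to a coloring that flags ``bad'' $n$-sets with color $k{+}1$.

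Concretely, I would first use iterated finite Ramsey's theorem to choose a size $s_0\in\N$ with the property that $s_0\rightarrow(s_1)^{a_1}_{p_1}$, $s_1\rightarrow(s_2)^{a_2}_{p_2}$, $\dots$, $s_{\numhelpers-1}\rightarrow(n)^{a_{\numhelpers}}_{p_{\numhelpers}}$; build these inductively from $s_{\numhelpers}=n$ downward. Then, because $\liminf_w\phi(w)=\infty$, pick $w$ large enough that $\phi(w)-m\geq s_0$ and $\phi(w)\geq n$. Next define $F:[\{1,\dots,w\}]^n\to\{1,\dots,k,k{+}1\}$ by
\[
F(Z)=\begin{cases} f(Z) & \text{if } Z\subseteq(m,w]\text{ and }Z\text{ is }\rho_i\text{-homogeneous for every }i,\\ k+1 & \text{otherwise.}\end{cases}
\]
Apply $w\rightarrow(\phi(w))^n_{k+1}$ to obtain an $F$-homogeneous $Y\subseteq\{1,\dots,w\}$ with $|Y|\geq\phi(w)$.

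The key step is ruling out that $Y$ is monochromatic with color $k{+}1$. Since $|Y\cap(m,w]|\geq\phi(w)-m\geq s_0$, the chain of partition relations gives an $n$-element $Z\subseteq Y\cap(m,w]$ that is $\rho_i$-homogeneous for every $i$; for this $Z$, $F(Z)=f(Z)\neq k{+}1$, contradicting $F$-homogeneity at color $k{+}1$. Hence $Y$ is $F$-homogeneous with some color $c\in\{1,\dots,k\}$, which forces $Y\subseteq(m,w]$ and $f(Z)=c$ for every $Z\in[Y]^n$, i.e.\ $Y$ is $f$-homogeneous. It remains to check that $Y$ is $\rho_i$-homogeneous (not merely that every $n$-subset is). For any two $a_i$-subsets $A,B$ of $Y$, since $a_i\leq n-1$ and $|Y|\geq n$, I can change $A$ into $B$ one element at a time through a chain $A=A_0,A_1,\dots,A_r=B$ of $a_i$-subsets of $Y$ with $|A_j\cup A_{j+1}|\leq a_i+1\leq n$; each consecutive pair lies inside some $n$-subset of $Y$, which is $\rho_i$-homogeneous, so $\rho_i(A)=\rho_i(B)$.

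The main obstacle is just the combinatorial bookkeeping in the last step: packaging several helper colorings of mixed arity $a_i\leq n-1$ into a single application of $w\to(\phi(w))^n_{k+1}$, and then reading off \emph{full} $\rho_i$-homogeneity from the $F$-homogeneity of an $n$-uniform coloring. Everything else is a routine transcription of the $n=2$ argument.
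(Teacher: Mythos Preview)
Your proof is correct and follows essentially the same approach as the paper: define the same chain of finite Ramsey numbers (the paper calls them $w_1,\dots,w_{\numhelpers}$ instead of your $s_0,\dots,s_{\numhelpers-1}$), choose $w$ with $\phi(w)-m$ above that chain, define the same auxiliary $(k{+}1)$-coloring $F$, and rule out color $k{+}1$ by iterated finite Ramsey inside $Y\cap(m,w]$. The only difference is cosmetic: the paper dispatches the final step (that $\rho_i$-homogeneity of every $n$-subset of $Y$ implies $\rho_i$-homogeneity of $Y$ itself) in one sentence, whereas you spell out the chain argument explicitly---which is a welcome clarification, since for $a_i$ close to $n-1$ two $a_i$-subsets need not sit inside a common $n$-subset.
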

\begin{proof}
Fix $m,p_1,\dots,p_{\numhelpers}\in\N$.
First we must select $w\in\N$.  
To help define $w$, we define numbers $w_1,\dots,w_{\numhelpers}$ by induction from $\numhelpers$ down to $1$.  
Let $w_{\numhelpers}\in\N$ be large enough such that\ $w_{\numhelpers}\rightarrow (n)^{a_{\numhelpers}}_{p_{\numhelpers}}$.
Beginning with $i={\numhelpers}-1$, and counting down until $i=1$, let $w_i\in\N$ be large enough such that\ $w_i\rightarrow (w_{i+1})^{a_i}_{p_i}$.
Finally, let $w\in\N$ be large enough such that\ $\phi(w)-m\geq w_1$.  

Given any $\rho_1,\dots,\rho_{\numhelpers}$ such that\ $\rho_i\in {p_i}^{[\{1,\dots,w\}]^{a_i}}$, 
	we must obtain the desired set $Y\subseteq (m,w]$.
Toward this end, we define an auxiliary coloring $F:[\N]^n\rightarrow\{1,\dots,k,k+1\}$ as follows.
We set $F(Z)=f(Z)$ if $Z$ is homogeneous for each $\rho_i$ and $Z\subseteq(m,w]$.
Otherwise, we set $F(Z)=k+1$. 

We now use the assumption in $\PRT^n_k$ that $w\rightarrow(\phi(w))^n_{k+1}$ for all $w$.
Take any $F$-homogeneous subset $Y\subseteq\{1,\dots,w\}$ with $|Y|\geq\phi(w)$.  
Such a set $Y$ exists because $w\rightarrow(\phi(w))^n_{k+1}$.  
We will show that $Y$ is homogeneous for $F$ with some color $i\in\{1,\dots,k\}$, and is therefore the desired set.

Because $|Y|=\phi(w)$, it is clear that $|Y\cap(m,w]| \geq \phi(w)-m \geq w_1$.
Beginning with $i=1$, and counting up until $i={\numhelpers}-1$, we see that there is a $w_{i+1}$-element subset of $Y\cap(m,w]$ which is homogeneous for $\rho_1,\dots,\rho_{i}$.
Finally, there is a $n$-element subset $Z$ of $Y\cap(m,w]$ which is homogeneous for $\rho_1,\dots,\rho_{{\numhelpers}-1},\rho_{\numhelpers}$.

Note that by the definition of $F$, that $F(Z)=f(Z)\in\{1,\dots,k\}$. 
Because $Z\in[Y]^n$, and because $Y$ is $F$-homogeneous, $Y$ is given color $c\neq k+1$ by $F$. 
It follows that $Y\subset(m,w]$ and that $Y$ is $f$ homogeneous.  
It also follows that each $V\in[Y]^n$ is homogeneous for $\rho_1,\dots,\rho_{\numhelpers}$.
Because the exponent of each of these maps is less than $n$, 
	$Y$ itself is homogeneous for each $\rho_i$.  
Clearly $[Y]^{n-1}\subseteq[\N]^{n-1}$, and $|Y|\geq\phi(w)$.  
In other words, $Y$ is the desired set. 
\end{proof}

The next lemma is the analog of Claim 2 in \cite{ramseytype}.
\begin{lemma}\label{lemma.unionlarge.n}
Fix any computable instance $f,\phi$ of $\PRT^2_k$.  
The union of any two small sets of $[\N]^{n-1}$ is small.
In particular, for any finite partition $L=L_1\cup\dots\cup L_s$ of a large set $L$, one of the $L_i$ is large.
\end{lemma}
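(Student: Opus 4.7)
My plan is to follow the proof of Lemma \ref{lemma.unionlarge.2} in spirit, adding bookkeeping to accommodate the family of $\numhelpers$ helper colorings of various arities and the fact that our ambient space is now $[\N]^{n-1}$. The first step is to fix witnesses to the smallness of $S_1, S_2 \subseteq [\N]^{n-1}$: for each $i \in \{1,2\}$, pick $m_i$, bounds $p_{i,1}, \ldots, p_{i,\numhelpers}$, and for each $w$ a tuple $(\rho_{i,1,w}, \ldots, \rho_{i,\numhelpers,w})$ with $\rho_{i,j,w} \in p_{i,j}^{[\{1,\ldots,w\}]^{a_j}}$ for which no admissible $Y$ exists.

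Next I would combine these into candidate witnesses for the smallness of $S_1 \cup S_2$. Set $m = \max\{m_1, m_2\}$, define the distinguishing coloring $s\colon [\N]^{n-1} \to \{1,2\}$ by $s(Z)=1$ iff $Z\in S_1$, and fix one index $j^*$ with $a_{j^*} = n-1$ (such a $j^*$ exists since the tuple $(n-1, 1)$ appears in the enumeration for every $n \geq 2$). Let $\hat{\rho}_{j,w}(Z) = \langle \rho_{1,j,w}(Z), \rho_{2,j,w}(Z)\rangle$ for $j \neq j^*$, let $\hat{\rho}_{j^*,w}(Z) = \langle \rho_{1,j^*,w}(Z), \rho_{2,j^*,w}(Z), s(Z)\rangle$, and take each $\hat{p}_j$ to be a suitable upper bound on the range, doubled at $j^*$. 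Since Definition \ref{defn.large.n} quantifies over all $p_j \in \N$, we are free to use these larger bounds. Assume for contradiction that $S_1 \cup S_2$ is large; then for our $m$ and $\hat{p}_j$ there is some $\hat{w}$ such that the chosen $\hat{\rho}_{j,\hat{w}}$ admit a set $Y \subseteq (m,\hat{w}]$ with $[Y]^{n-1} \subseteq S_1 \cup S_2$, $|Y| \geq \phi(\hat{w})$, $f$-homogeneous, and $\hat{\rho}_{j,\hat{w}}$-homogeneous for each $j$. Homogeneity at $j^*$ forces $Y$ to be $s$-homogeneous, hence $[Y]^{n-1} \subseteq S_i$ for a single $i$; projecting onto components shows $Y$ is also $\rho_{i,j,\hat{w}}$-homogeneous for every $j$. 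So $Y$ and $\hat{w}$ witness the largeness condition for $S_i$ at parameters $m_i$, $p_{i,1}, \ldots, p_{i,\numhelpers}$ and $\rho_{i,1,\hat{w}}, \ldots, \rho_{i,\numhelpers,\hat{w}}$, contradicting our chosen witnesses of smallness. The partition statement then follows by induction on the number of pieces.

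The only subtle point I anticipate is confirming that $s$, which is of arity $n-1$, can be absorbed cleanly into the existing helper scheme without enlarging $\numhelpers$. This comes down to observing that the enumeration of helper-coloring tuples always contains one with first coordinate $n-1$, so $s$ fits alongside an existing $\rho_{i,j^*,w}$ at the matching arity. The rest is a product-coloring construction exactly parallel to the $n=2$ case.
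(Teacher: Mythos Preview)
Your proposal is correct and follows essentially the same argument as the paper: combine the smallness witnesses for $S_1$ and $S_2$ into a single product coloring, absorb the membership coloring $s:[\N]^{n-1}\to\{1,2\}$ at an index $j^*$ with $a_{j^*}=n-1$, and derive a contradiction from a putative $Y$ witnessing the largeness of $S_1\cup S_2$. The paper's proof differs only in notation (using $\rho$'s and $\sigma$'s for the two sets rather than a double index), and it justifies the existence of $j^*$ by the phrase ``by our choice of the $a_i$'' where you cite the tuple $(n-1,1)$ explicitly.
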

\begin{proof}
Suppose that $S_1,S_2\subset[\N]^{n-1}$ are small.  We show that $S_1\cup S_2$ is small.

Let $m_1$,\ $p_1,\dots,p_{\numhelpers}\in\N$, and $w\mapsto \rho^{w}_i$ be chosen to witness the smallness of $S_1$ 
	(for each $w\in\N$, the $\rho^{w}_i\in p_i^{[\{1,\dots,w\}]^{a_i}}$ witness the failure of $w$ to satisfy the definition of largeness).  
Let $m_2,$\ $q_1,\dots,q_{l}\in\N$, and $w\mapsto \sigma^{w}_i$ (s.t.\ $\sigma^{w}_i\in q_i^{[\{1,\dots,w\}]^{a_i}}$) witness the smallness of $S_2$.  
Recall that by our choice of the $a_i$, $a_t=n-1$ for some $t\leq {\numhelpers}$.  

To test the largeness of $S_1\cup S_2$, we now define $m$ (the lower bound on $Y$) and the $p_i$ (the number of colors assigned by the $\rho_i$).
Define $m=\max\{m_1,m_2\}$, define $\hat{p}_t=p_t\cdot q_t \cdot 2$, 
and define $\hat{p}_i=p_i\cdot q_i$ for $i\neq t$. 
Note that $\hat{p}_i>p_i$.  
This is why Definition \ref{defn.large.n} quantifies over all possible choices of $p_i$.

We wish to define the $\hat{\rho}_i$ so that any set $Y$ 
	which is homogeneous for each $\hat{\rho}_i$ has $[Y]^{n-1}\subset S_c$ for $c=1$ or $2$.  
Recalling that $S_c\subseteq[\N]^{n-1}$, we define $s:[\N]^{n-1}\rightarrow\{1,2\}$ by $s(U)=1$ if $U\in S_1$, and $s(U)=2$ otherwise.

Given any $w$, we first define $\hat{\rho}^w_t$ by setting $\hat{\rho}^w_t(U)=\langle \rho^w_t(U), \sigma^w_t(U), s(U)\rangle$ for each $U\in[\{1,\dots,w\}]^{n-1}$. 
Then, for each $i\neq t$ we define $\hat{\rho}^w_i$ by setting $\hat{\rho}^w_i(U)=\langle \rho^w_i(U), \sigma^w_i(U)\rangle$ for each $U\in[\{1,\dots,w\}]^{a_i}$.

Toward a contradiction, suppose that $S_1\cup S_2$ is large.  
Fix $\hat{w}$ and $\hat{Y}$ witnessing that $S_1\cup S_2$ is large 
	with $m$, $\hat{p}_i$, and $\hat{\rho}^{\hat{w}}_i$ as defined above. 
Then $[\hat{Y}]^{n-1}\subseteq S_1\cup S_2$ and $\hat{Y}$ is homogeneous for the  $\hat{\rho}^{\hat{w}}_i$ defined above.  
Note that $\hat{Y}$ is homogeneous for $s$ (because it is homogeneous for  $\hat{\rho}^{\hat{w}}_t$) so $[\hat{Y}]^{n-1}\subseteq S_j$ for some $j\in\{1,2\}$. 
In either case, $\hat{Y}\subseteq(m_j,\hat{w}]$ and $|\hat{Y}|\geq \phi(\hat{w})$.
Furthermore, $\hat{Y}$ is homogeneous for $f$, each $\rho^{\hat{w}}_i$, and each $\sigma^{\hat{w}}_i$.  
This contradicts our choice of parameters to witness of the smallness of both $S_1$ and $S_2$. 
\end{proof}

Our last largeness lemma comes from the proof of Claim 4 of \cite{ramseytype}.
Essentially, it says that for any coloring $h$ of exponent less than $n$, most elements of a large set are $h$-homogeneous.

\begin{lemma}	
\label{lemma.large-n.colorings}
Fix any computable instance $f,\phi$ of $\PRT^2_k$.  
Suppose that $L\subseteq[\N]^{n-1}$ is large and $p\leq n-1$.
For any coloring $h:[\N]^p\rightarrow\{1,\dots,s\}$, the set $\{Z\in L: Z$ is $h$-homogeneous$\}$ is large. 
\end{lemma}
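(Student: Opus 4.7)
The plan is to deduce the largeness of $L' := \{Z\in L : Z \text{ is } h\text{-homogeneous}\}$ directly from the definition of largeness of $L$, by folding $h$ into one of the helper-coloring slots $\rho_i$. The key observation is that the enumeration of helper exponents $a_1,\dots,a_{\numhelpers}$ hits every value in $\{1,\dots,n-1\}$: for any $p\leq n-1$ the ordered tuple $(p,n-p)$ has $j=2>1$ components summing to $n$, so $p=a_{i_0}$ for some $i_0\leq\numhelpers$. Thus $h$ may be regarded as an extra coordinate of an admissible helper coloring of exponent $p$.

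Now to verify largeness of $L'$: fix $m$ and $p_1,\dots,p_{\numhelpers}$. Apply the largeness of $L$ with the same $m$ but with modified color bounds $\tilde p_i=p_i$ for $i\neq i_0$ and $\tilde p_{i_0}=p_{i_0}\cdot s$, obtaining a witness $w$. Given arbitrary $\rho_i\in p_i^{[\{1,\dots,w\}]^{a_i}}$, define augmented strings $\tilde\rho_i=\rho_i$ for $i\neq i_0$ and
\[
\tilde\rho_{i_0}(W)=\langle\rho_{i_0}(W),\,h(W)\rangle \quad\text{for each } W\in[\{1,\dots,w\}]^{p}.
\]
By largeness of $L$ applied to the $\tilde\rho_i$, there is $Y\subseteq(m,w]$ with $[Y]^{n-1}\subseteq L$, $|Y|\geq\phi(w)$, $Y$ is $f$-homogeneous, and $Y$ is $\tilde\rho_i$-homogeneous for every $i$.

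Projecting the pair defining $\tilde\rho_{i_0}$ onto its first coordinate shows that $Y$ is $\rho_i$-homogeneous for every $i$; projecting onto its second coordinate shows that $Y$ is $h$-homogeneous. It only remains to observe that for each $Z\in[Y]^{n-1}$ we have $[Z]^p\subseteq[Y]^p$, so $h$-homogeneity of $Y$ forces $h$-homogeneity of $Z$; combined with $[Y]^{n-1}\subseteq L$ this gives $[Y]^{n-1}\subseteq L'$. Hence $w$ and $Y$ witness the defining condition of largeness for $L'$ at the parameters $m,p_1,\dots,p_{\numhelpers}$, completing the proof.

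I do not anticipate any real obstacle; the argument is essentially a coding maneuver inside the definition of largeness. The only bookkeeping point worth checking carefully is that the fixed enumeration of tuples indeed produces some $i_0$ with $a_{i_0}=p$, which is exactly where the hypothesis $p\leq n-1$ is used.
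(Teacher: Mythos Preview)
Your argument is correct, and it is genuinely different from the paper's. The paper does not verify largeness of $L'$ directly; instead it partitions $L$ into $L'$ and its complement $E=\{Z\in L:Z\text{ is not }h\text{-homogeneous}\}$, invokes Lemma~\ref{lemma.unionlarge.n} to conclude that one piece is large, and then rules out $E$ by a short contradiction: if $E$ were large one could find (using $\liminf\phi=\infty$) a finite $Y$ with $[Y]^{n-1}\subseteq E$ and $|Y|\rightarrow(n-1)^p_s$, whence finite Ramsey yields an $h$-homogeneous $Z\in[Y]^{n-1}\subseteq E$, which is absurd. Your route bypasses both Lemma~\ref{lemma.unionlarge.n} and the appeal to finite Ramsey's theorem by exploiting a structural feature of Definition~\ref{defn.large.n}: since the exponents $a_i$ realize every value $1,\dots,n-1$ and the color bounds $p_i$ are universally quantified, an arbitrary extra coloring of exponent $p\le n-1$ can simply be absorbed into the $i_0$-th slot. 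This is more direct and makes clearer why the definition quantifies over all $p_1,\dots,p_{\numhelpers}$. The paper's approach, by contrast, highlights the conceptual decomposition into $h$-homogeneous and non-$h$-homogeneous parts and reuses the already-established partition lemma, which fits the inductive pattern of the surrounding section.
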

\begin{proof}
Let $E=\{Z\in L : (\exists D_1,D_2\in [Z]^p)[h(D_1)\neq h(D_2)]\}$.
Then $L$ is the union of $E$ and $\{Z\in L: Z$ is $h$ homogeneous$\}$, so one of these is large by Lemma \ref{lemma.unionlarge.n}.

Suppose toward a contradiction that $E$ is large. 
Because $\liminf_x\phi(x)=\infty$, and by the definition of large, there are arbitrarily large finite sets $Y$ such that $[Y]^{n-1}\subseteq E$.  
Take $Y$ such that $|Y|\rightarrow (n-1)^p_{s}$. 
Then there is some $Z\in[Y]^{n-1}$ which is $h$-homogeneous.
But then $Z\in E$ by our choice of $Y$, contradicting the definition of $E$.
\end{proof}

\section{A tree proof of $\PRT^n_k$}
\label{sect.PRTn}

The purpose of this section is to show that for any $n,k\in\omega$ and for any $\paset\gg\zero^{(n-1)}$, 
each computable instance of $\PRT^n_k$ has a $\paset$-computable solution.

To simplify the notation in this section, we fix a computable instance of $\PRT^n_k$.
That is, we fix a computable coloring $f:[\N]^n\rightarrow\{1,\dots,k\}$ 
	and a computable function $\phi:\N\rightarrow\N$  as in $\PRT^n_k$.

\begin{defn}
Let $\partitions$ be the set of all ways of partitioning $n$ numbers into disjoint intervals.  
In other words, $\partitions=\{(r_1,\dots,r_l): r_1+\dots+r_l=n\}$ where each $r_i>0$.
We say that $(r_1,\dots,r_l)$ has length $l$. 
For each $l$, let $\partitions_l=\{(r_1,\dots,r_t)\in \partitions: t=l\}$ and let $\partitions_{\leq l}=\bigcup_{j\leq l} \partitions_j$.  
That is, $\partitions_{\leq l}$ is the set of partition types of length up to $l$.
\end{defn}

As before, our goal is to define a sequence of blocks $\{Y_i\}$ 
	such that the color of any $Z\in [\bigcup_i Y_i]^n$ 
	depends only on how the $\{Y_i\}$ partition $Z$.  
\begin{defn}
Suppose we have fixed an increasing sequence of blocks $\{Y_i\}$.
For any $Z\in[\bigcup Y_i]^n$, 
	we say that $(r_1,\dots,r_s)$ is the \emph{partition type of $Z$}
	if there are $i_1<\dots<i_s$ such that $|Z\cap Y_{i_j}|=r_j$ for each $j\leq s$, and if $Z=\bigcup_{j\leq s} Y_{i_j}$.	
\end{defn}

There are $2^{n-1}$ elements in $\partitions$. 
If we can ensure that the color of an $n$-tuple depends only on its partition type, 
	we will have ensured that $X$ is semi-homogeneous.
The first step in building the required sequence of blocks is to define an appropriate collection of helper colorings.  

\begin{defn}
Given $l\geq 1$ and a set of $2^{l}$-many functions $\mathcal{F}$, 
	we say that $\mathcal{F}$ is \emph{a collection of length $\leq l$ helper colorings} 
	if there is one exponent $r_1$ coloring $f_{r_1,\dots,r_i}:[\N]^{r_1}\rightarrow\{1,\dots,k\}$ 
	for each $(r_1,\dots,r_i)\in \partitions_{\leq l}$, 
	and if $f_n=f$.
\end{defn}

Our goal is to define a collection of length $\leq n$ helper colorings, 
	one function for each partition type $(r_1,\dots,r_l)\in\partitions$.  
To specify the properties that this collection of helper colorings should have, we need three more definitions.

\begin{defn}
Given finite $U,Z\subset\N$, we say that $Z$ \emph{extends} $U$ if $U=Z\cap\{1,\dots,\max(U)\}$.  
That is, $Z$ extends $U$ if $U$ is an initial segment of $Z$.
\end{defn}

The intuition is this: for each $r_1$ element set $U\in[\N]^{r_1}$, 
	$f_{r_1,\dots,r_l}(U)$ is the color that we promise to give 
	any $n$ element set $Z\subset \bigcup Y_i$ 
		with partition type $(r_1,\dots,r_l)$ 
	that  extends $U$. 
	
We will proceed by induction on $l$, using the coloring $f_{r_1+r_2,r_3\dots,r_{l}}$ to define the coloring $f_{r_1,r_2,\dots,r_l}$.
Recall that for exponent $n$, largeness is defined for subsets of $\{Z : Z\in[\N]^{n-1}\}$.

\begin{defn}
Suppose we have fixed a collection $\mathcal{F}$ of length $\leq l$ helper colorings.
For any finite set $W\subset\N$ and any $Z\in[\N\backslash W]^{n-1}$, we say that \emph{$Z$ is good with $W$} if:
$$(\forall(r_1,{\scriptstyle\dots},r_j)\in \partitions_{\leq l})(\forall U\in[W]^{r_1})(\forall V\in[Z]^{r_2})
	[f_{r_1,r_2,{\scriptstyle\dots},r_j}(U)=f_{r_1+r_2,r_3,{\scriptstyle\dots},r_j}(U\cup V)]$$
\end{defn}

Intuitively, we wish to ensure that for each good $Z$ of partition type $(r_1,\dots,r_l)$, 
	the color promised to $Z$ when viewed an extension of $U$, 
		where $Z\backslash U$ has partition type $(r_2,\dots,r_l)$,
	is the same as the color promised to $Z$ when viewed as an extension of $U\cup V$, 
		where $Z\backslash\ U\cup V$ has partition type $(r_3,\dots,r_l)$.  

Unfortunately, our intuitions about the helper functions 
	refer to the sequence of blocks that we are trying to define. 
We will use largeness to define the helper colorings 
	without reference to any sequence of blocks.

\begin{defn}
A collection $\mathcal{F}$ of length $\leq l$ helper colorings is made up of \emph{compatible} helper colorings 
	if $\big\{Z: Z$ is good with $\{1,\dots,w\}\big\}$ is large for each $w\in\N$.
\end{defn}

\begin{lemma}
\label{lemma.PRTn.use-colorings}
Fix $n,k\in\omega$ and any computable instance $f,\phi$ of $\PRT^n_k$. 
If there is a $\paset$-computable collection of length $\leq n$ compatible helper colorings $\mathcal{F}$, then 
	there is a $\paset$-computable set $A$ which is packed for $\phi$ and semi-homogeneous for $f$.
\end{lemma}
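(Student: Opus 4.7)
The plan is to generalize the tree proof of $\PRT^2_k$ given in Theorem~\ref{thm.PRT2.pa}, using the full collection $\mathcal{F}$ of compatible helper colorings in place of the single coloring $g$. Since $\mathcal{F}$ is $\paset$-computable and the relation ``$Z$ is good with $W$'' is uniformly decidable from $\mathcal{F}$, every step of the construction will be $\paset$-computable.

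First, I would recursively build an increasing sequence of blocks $\{Y_t\}_{t\geq 1}$. Setting $w_0=0$ and assuming $Y_1,\dots,Y_t$ and $w_1<\dots<w_t$ have been defined, the set $L_t=\{Z\in[\N]^{n-1}:Z\text{ is good with }\{1,\dots,w_t\}\}$ is large by the compatibility of $\mathcal{F}$. Applying Claim~\ref{claim.PRTn-inductive} to $L_t$, $m=w_t$, and the $\numhelpers$ helper colorings of exponent less than $n$ in $\mathcal{F}$, I obtain $w_{t+1}>w_t$ and $Y_{t+1}\subseteq(w_t,w_{t+1}]$ with $|Y_{t+1}|\geq\phi(w_{t+1})$ which is homogeneous for $f$ and for every helper coloring in $\mathcal{F}$ and satisfies $[Y_{t+1}]^{n-1}\subseteq L_t$. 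A $\paset$-computable search locates such witnesses.

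The main technical step is to show that the partition type of an $n$-tuple determines its $f$-color: if $Z\in[\bigcup_t Y_t]^n$ has partition type $(r_1,\dots,r_l)$ meeting blocks $Y_{i_1}<\dots<Y_{i_l}$ in sets $U_j=Z\cap Y_{i_j}$, then $f(Z)=f_{r_1,\dots,r_l}(U_1)$. For $l=1$ this is immediate from $f_n=f$. For $l\geq 2$, I would argue by telescoping: at step $j$, pick any $(n-1)$-subset $Z'$ of $Y_{i_{j+1}}$ containing $U_{j+1}$ (possible because $r_{j+1}\leq n-1$ when $l\geq 2$). Since $[Y_{i_{j+1}}]^{n-1}\subseteq L_{i_j}$, the set $Z'$ is good with $W=\{1,\dots,w_{i_j}\}$, and the partition type $(r_1+\dots+r_j,r_{j+1},\dots,r_l)$ lies in $\partitions_{\leq l}$, so compatibility yields
\[
f_{r_1+\dots+r_j,\,r_{j+1},\dots,r_l}(U_1\cup\dots\cup U_j)=
f_{r_1+\dots+r_{j+1},\,r_{j+2},\dots,r_l}(U_1\cup\dots\cup U_{j+1}).
\]
Iterating from $j=1$ to $j=l-1$ collapses the right-hand side to $f_n(Z)=f(Z)$, while the left-hand side begins at $f_{r_1,\dots,r_l}(U_1)$. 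By $f_{r_1,\dots,r_l}$-homogeneity of $Y_{i_1}$, this color depends only on the partition type and on the index $i_1$.

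Finally, for each $\pi=(r_1,\dots,r_l)\in\partitions$ define the $\paset$-computable singleton coloring $c_\pi:\N\to\{1,\dots,k\}$ by $c_\pi(i)=f_{r_1,\dots,r_l}(U)$ for any $U\in[Y_i]^{r_1}$ (well-defined by homogeneity of $Y_i$). Iterating the infinite pigeonhole principle $|\partitions|=2^{n-1}$ times, non-uniformly in $\paset$ using the eventual constant colors as finite parameters, I obtain an infinite $I\subseteq\N$ on which every $c_\pi$ is constant. Setting $A=\bigcup_{i\in I}Y_i$, the set $A$ is $\paset$-computable and is packed for $\phi$ because each $Y_i$ is a block; and it is semi-homogeneous because the preceding key property shows $f$ takes at most one color on each of the $2^{n-1}$ partition types, yielding $|\{f(Z):Z\in[A]^n\}|\leq 2^{n-1}$. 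The principal obstacle is the telescoping compatibility argument above; the rest of the construction parallels the $n=2$ case of Theorem~\ref{thm.PRT2.pa} closely.
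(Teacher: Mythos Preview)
Your proposal is correct and follows essentially the same approach as the paper: build an increasing sequence of blocks $\{Y_t\}$ by repeatedly applying Claim~\ref{claim.PRTn-inductive} to the large set of $Z$ good with $\{1,\dots,w_t\}$, verify via the telescoping compatibility identity that $f(Z)=f_{r_1,\dots,r_l}(Z_1)$ for any $Z$ of partition type $(r_1,\dots,r_l)$, then thin using $2^{n-1}$ applications of pigeonhole on the induced singleton colorings. Your telescoping step is in fact slightly more explicit than the paper's (you spell out the auxiliary $(n-1)$-set $Z'\supseteq U_{j+1}$); note only that your claim $[Y_{i_{j+1}}]^{n-1}\subseteq L_{i_j}$ follows from the construction's $[Y_{t+1}]^{n-1}\subseteq L_t$ together with the monotonicity $L_{i_{j+1}-1}\subseteq L_{i_j}$, which holds because ``good with $W$'' is downward closed in $W$.
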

\begin{proof}
Let $\mathcal{F}=\{f_{r_1,\dots,r_l} : (r_1,\dots,r_l)\in\partitions\}$ 
	be any $\paset$-computable collection of compatible helper colorings.  

We first show that $\paset$ computes an infinite sequence of blocks $\{Y_i\}$ such that the color of any $Z\in[\{Y_i\}]^n$ depends only on two things: (1) the smallest block that contains an element of $Z$ and (2) the partition type of $Z$.  

More precisely, we first show that $\paset$ computes an infinite sequence of blocks $\{Y_i\}$ such that for any $Z\in[\bigcup_i Y_i]^n$, if $(r_1,\dots,r_l)$ is the partition type of $Z$ and if $Z_1$ is the $r_1$ smallest elements of $Z$, then $f(Z)=f_{r_1,\dots,r_l}(Z_1)$.

We define the $Y_i$ by induction on $i$.

Suppose that we have defined $Y_1,\dots,Y_i$ and $w_0,w_1,\dots,w_i$.
The set of $Z\in[\N]^{n-1}$ that are good with $W=Y_1\cup\dots\cup Y_i$ 
	is clearly large because $Y_1\cup\dots\cup Y_i\subseteq\{1,\dots,w_i\}$ 
	and because $\big\{Z\in[\N\backslash W]^{n-1}: Z$ is good with $\{1,\dots,w_i\}\big\}$ 
	is large by the compatibility of $\mathcal{F}$. 

To define $Y_{i+1}$, we search for the first finite set $Y_{i+1}$ and number $w_{i+1}\in\N$ s.t.\ 
$$\begin{array}{l@{}l@{}l@{}l}
	&	& Y_{i+1}	&\subseteq (w_i,w_{i+1}]\text{ and each }Z\in[Y_{i+1}]^{n-1}\text{ is good with }Y_1\cup\dots\cup Y_i, \\
	&	& 	&|Y_{i+1}|\geq \phi(w_{i+1}),\\
	&	& 	&Y \text{ is homogeneous for }f_n=f, \text{ and}\\
	&	& 	&Y \text{ is homogeneous for }f_{r_1,\dots,r_l}\text{ for each }(r_1,\dots,r_l)\in \partitions\text{ s.t.\ }1<l\leq n.
\end{array}$$
By the definition of large and by Claim \ref{claim.PRTn-inductive}, we will eventually find $Y_{i+1}$ and $w_{i+1}$.  
Because we can $\paset$-uniformly determine if a given finite set satisfies this property, 
it follows that we have a uniformly $\paset$-computable definition of the sequence $\{Y_i\}$.

Now consider any $Z\in[\{Y_i\}]^n$ with partition type $(r_1,\dots,r_l)$.
We claim that $f(Z)=f_{r_1,\dots,r_l}(Z_1)$. 
To see this, for each $i\leq l$, let $Z_i\subseteq Z$ be the least $r_1+\dots+r_i$ elements of $Z$.
By construction of $\{Y_i\}$ and the definition of ``good with $W$'', 
	$$f_{r_1, r_2,\ \dots\ ,\ r_l}(Z_1)=f_{r_1+r_2,\ \dots\ ,\ r_l}(Z_{2}),\text{ and}$$ 
	$$(\forall i<l)\big[f_{r_1+\dots+r_i,\ r_{i+1},\ \dots\ ,\ r_l}(Z_i)=f_{r_1+\dots+r_i+r_{i+1},\ \dots\ ,\ r_l}(Z_{i+1})\big].$$ 
Inductively, we see that $f_{r_1,\dots,r_l}(Z_1)=f_n(Z_l)$.  
Recall that $f_n=f$ because $\mathcal{F}$ is a collection of helper colorings, and therefore $f_{r_1,\dots,r_l}(Z_1)=f(Z)$, as desired.
Note that $Z_1\subseteq Y_i$ for some $i$, and that each $Y_i$ is $f_{r_1,\dots,r_l}$ homogeneous.
It follows that the color of $Z\in[\{Y_i\}]^n$ depends only on (1) and (2).

To obtain a subsequence where the color of $Z$ depends only on its partition type, notice that the sequence of blocks $\{Y_i\}$ induces one coloring for each partition type.  
For each $(r_1,\dots,r_l)\in S$, define $h_{r_1,\dots,r_l}:\N\rightarrow\{1,\dots,k\}$ by setting $h_{r_1,\dots,r_l}(i)=f_{r_1,\dots,r_l}(Z)$ for any/all $Z\in[Y_i]^{r_1}$.
Iterating the infinite pigeonhole principle, once for each of the $2^{n-1}$-many induced colorings, we get an infinite set $I$ homogeneous for each $h_{r_1,\dots,r_l}$. 
Note that we can (non-uniformly) compute $I$ from $\paset$.
 
Define $A=\bigcup_{i\in I} Y_i$.  
Clearly $A\leq_T \paset$. 
Because $A$ is the union of infinitely many blocks, $A$ is packed for $\phi$.  
Note that the color given to any $Z\in[A]^n$ is completely determined by the way that $Z$ is partitioned by $\{Y_i\}_{i\in I}$.   
In other words, $A$ is the desired packed semi-homogeneous set.
\end{proof}

\subsection{Obtaining the helper colorings}
\label{PRTn.colorings}

In this subsection, we will 
	(1) describe how we will build trees using colorings, 
	(2) prove these trees are infinite, and 
	(3) show how to build a compatible collection of helper colorings using a specific choice of $\Pi^0_n$ trees.
In particular, we will define a collection of trees 
	so that any set of paths through these trees are the helper functions 
	$f_{r_1,\dots,r_l}:[\N]^{r_1}\rightarrow\{1,\dots,k\}$.

For $l=1$, we simply define $T_1=\{\sigma: \sigma\prec f\}$ so that $[T_1]=\{f\}=\{f_n\}$.  
We will define the remaining trees by induction on $l$. 

In principle, we could define one tree $T_{r_1,\dots,r_l}\subseteq k^{[<\N]^{r_1}}$ for each partition type $(r_1,\dots,r_l)\in S$.  
Then $f_{r_1,\dots,r_l}:[\N]^{r_1}\rightarrow\{1,\dots,k\}$ will be some path through this tree.
Unfortunately, we must define these trees so that the resulting functions are compatible.
This requires defining each tree relative to all of the helper colorings selected so far.
Because the definition of ``large'' is $\Pi^0_2$, the trees are $\Pi^0_2$ relative to their parameter 
	and the resulting procedure would require a degree $\paset\gg\,\zero^{(2^{n-1})}- 1\,$.
To reduce this complexity, we will instead define many trees simultaneously. 

For each $l$, we define a single tree 
		whose paths define \emph{all} the colorings $f_{r_1,\dots,r_t}$ for $(r_1,\dots,r_t)\in\partitions_l$.
More precisely, we define a single tree whose elements are a direct sum of strings of the following form for some $w\in\N$
	$$\tau=\Big(\bigoplus_{(r_1,\dots,r_l)\in \partitions_l} \tau_{r_1,\dots,r_l}\Big) \in \Big(\bigoplus_{(r_1,\dots,r_l)\in \partitions_l} k^{[\{1,\dots,w\}]^{r_1}}\Big).$$
We say this $\tau$ has length $w$ because each component $\tau_{r_1,\dots,r_l}$ is defined on exactly the subsets of $\{1,\dots,w\}$.
We now give the formal definition of the tree. 
\begin{defn}
\label{defn.PRTn.Tl}
Fix any collection of length $\leq l$ helper colorings $\mathcal{F}$ with $l\geq 2$.
We define $T_l^{\mathcal{F}}$ 
	to be the subtree of $\bigcup_{w\in\N} \left( \bigoplus_{\dots} k^{[\{1,\dots,w\}]^{r_1}}\right)$
	obtained  by setting
$$\Big(\bigoplus_{(r_1,\dots,r_l)\in \partitions_l}\tau_{r_1,\dots,r_l}\Big) \in T_l^{\mathcal{F}} $$
\emph{if and only if} there is a large set of $Z\in[\N\backslash\{1,\dots,w\}]^{n-1} \text{ s.t.}$
	\begin{enumerate}
	\item $Z$ is good with $\{1,\dots,w\}$ for the colorings being defined: 
		\begin{flushleft}
		$\displaystyle(\forall (r_1,\dots,r_l)\in \partitions_l)\big[ 
		(\forall\,U\in[\{1,\dots,w\}]^{r_1})(\forall\,V\in[Z]^{r_2})$
	\end{flushleft}
	\begin{center}
		$[\tau_{r_1,r_2,\dots,r_l}(U)=f_{r_1+r_2,r_3,\dots,r_l}(U\cup V)]\big]$
	\end{center}
	\item and $Z$ is good with $\{1,\dots,w\}$ for the colorings in $\mathcal{F}$: 
	\begin{flushleft}
		$(\forall (r_1,\dots,r_{m})\in \bigcup_{m<l}\partitions_{m})(\forall\,U\in[\{1,\dots,w\}]^{r_1})(\forall\,V\in[Z]^{r_2})$\\
	\end{flushleft}
	\begin{center}
		$\big[f_{r_1,r_2,\dots,r_m}(U)=f_{r_1+r_2,r_3,\dots,r_m}(U\cup V)\big]$
	\end{center}
	\end{enumerate}
\end{defn}

When it is clear which collection of length $\leq l$ colorings is being used to define $T_l^{\mathcal{F}}$, 
	we will often simplify our notation by writing $T_l=T_l^{\mathcal{F}}$.

\begin{claim}
\label{claim.Tl-inf}
	Fix $l\geq 2$ and any compatible collection of length $\leq l$ helper colorings $\mathcal{F}$.  
	Then $T_{l}^{\mathcal{F}}$ is infinite.  
\end{claim}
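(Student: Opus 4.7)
The plan is to show that for each $w\in\N$, the tree $T_l^{\mathcal{F}}$ contains a string of length $w$; since $T_l^{\mathcal{F}}$ is closed downward under restriction, this alone forces the tree to be infinite. Fix $w$. I will define $\tau=\bigoplus_{(r_1,\dots,r_l)\in\partitions_l}\tau_{r_1,\dots,r_l}$ of length $w$ by specifying $\tau_{r_1,\dots,r_l}(U)$ for each partition type and each $U\in[\{1,\dots,w\}]^{r_1}$, while simultaneously maintaining a large witnessing set of $(n-1)$-element sets $Z$. By compatibility of $\mathcal{F}$, the set $L_0:=\{Z: Z \text{ is good with } \{1,\dots,w\}\}$ is a large subset of $[\N\setminus\{1,\dots,w\}]^{n-1}$, and this already secures condition (2) of Definition \ref{defn.PRTn.Tl} for every $Z\in L_0$.

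To arrange condition (1), I would enumerate the finitely many pairs $((r_1,\dots,r_l),U)$ with $(r_1,\dots,r_l)\in\partitions_l$ and $U\in[\{1,\dots,w\}]^{r_1}$, and refine $L_0$ in two rounds. In the first round, for each pair consider the shifted coloring $h_{U}^{(r_1,\dots,r_l)}(V) := f_{r_1+r_2,r_3,\dots,r_l}(U\cup V)$ of exponent $r_2\leq n-1$, and apply Lemma \ref{lemma.large-n.colorings} to shrink the current large set to a large subset on which each $Z$ is homogeneous for this coloring. Doing this once per pair produces a large $L'\subseteq L_0$ where every $h_U^{(r_1,\dots,r_l)}$ is simultaneously constant on $[Z]^{r_2}$ for every $Z\in L'$. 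In the second round, for each pair let $c_U^{(r_1,\dots,r_l)}(Z)$ denote this common value; partition the current large set according to the at most $k$ possible values of $c_U^{(r_1,\dots,r_l)}$, invoke Lemma \ref{lemma.unionlarge.n} to extract a large piece where $c_U^{(r_1,\dots,r_l)}\equiv c^*$, and set $\tau_{r_1,\dots,r_l}(U):=c^*$. Iterating over all pairs yields the desired $\tau$ together with a final large set $L^*$.

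The main obstacle is to ensure that largeness survives this cascade of refinements, but this is routine: each step either invokes Lemma \ref{lemma.large-n.colorings} directly or reduces to partitioning a large set into finitely many pieces so that Lemma \ref{lemma.unionlarge.n} supplies a large piece, and the total number of refinements is finite (bounded by $|\partitions_l|\cdot\sum_{r_1}\binom{w}{r_1}$ in each round). Verifying that $\tau\in T_l^{\mathcal{F}}$ with witness $L^*$ is then immediate: condition (2) of Definition \ref{defn.PRTn.Tl} is inherited from $L^*\subseteq L_0$, and condition (1) holds by the construction of $\tau_{r_1,\dots,r_l}(U)$ together with the homogeneity of each $Z\in L^*$ for the shifted coloring $h_U^{(r_1,\dots,r_l)}$.
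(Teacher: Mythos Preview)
Your argument is correct and follows essentially the same strategy as the paper: start from the large set $G=\{Z:Z\text{ is good with }\{1,\dots,w\}\}$ given by compatibility, iterate Lemma~\ref{lemma.large-n.colorings} over the finitely many shifted colorings $h_U^{(r_1,\dots,r_l)}$ to obtain a large set on which every $Z$ is simultaneously homogeneous for all of them, and then use Lemma~\ref{lemma.unionlarge.n} to pin down the values of $\tau$. The only cosmetic difference is that you fix the values $\tau_{r_1,\dots,r_l}(U)$ one pair at a time via a second round of refinements, whereas the paper does this in a single step by partitioning according to the induced string $\rho_Z$ (noting that once each $Z$ is homogeneous for every $h_U^{(r_1,\dots,r_l)}$, the string $\rho_Z$ is well defined, so the sets $\{Z:Z\text{ is good for }\rho\}$ really partition the large set); both routes are equivalent.
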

\begin{proof}
Fix $w\in\N$. We will show that $\rho\in T_l=T_l^{\mathcal{F}}$ for some $\rho$ of length $w$.  
Consider 
	$$\rho =\bigoplus_{(r_1,\dots,r_l)\in \partitions_l} \rho_{r_1,\dots,r_l} \in \bigoplus_{(r_1,\dots,r_l)\in \partitions_l}k^{[\{1,\dots,w\}]^{r_1}}.$$

By definition, $\rho\in T_l$ if and only if there is a large set of $Z\in[\N\backslash\{1,\dots,w\}]^{n-1}$ 
	which respect the promises that $\rho$ and $\mathcal{F}$ make about finite subsets of $\{1,\dots,w\}$.
Unfortunately, for any given $Z$, there may be some $(r_1,\dots,r_l)\in \partitions_l$ and some $U\in[\{1,\dots,w\}]^{r_1}$ such that $Z$ is not even homogeneous for $V\mapsto f_{r_1+r_2,r_3,\dots,r_l}(U\cup V)$.  
In this case, $Z$ is not good with $\{1,\dots,w\}$ for \emph{any} string $\rho$.  
  
Because $\mathcal{F}$ is compatible, the set $G=\{Z: Z$ is good with $\{1,\dots,w\}$ for the collection $\mathcal{F}$ is large. 
We claim that the set 
	$G\cap\big\{Z\in[\N\backslash\{1,\dots,w\}]^{n-1} : Z$ is good with $\{1,\dots,w\}$ for \emph{some} $\rho$ with $|\rho|=w\big\}$ 
	is large.  
Because $\partitions_l$ and $\{1,\dots,w\}$ are finite, there are finitely many functions $V\mapsto f_{r_1+r_2,\dots,r_l}(U\cup V)$.  
Iterating Lemma \ref{lemma.large-n.colorings} (once for each function) yields a large set of $Z\in G$ such that 
	for each $(r_1,\dots,r_l)\in \partitions_l$ and each  $U\in[\{1,\dots,w\}]^{r_1}$, there is a color $c$ such that $(\forall V\in[Z]^{r_2})[f_{r_1+r_2,r_3,\dots,r_l}(U\cup V)=c]$.  
Letting $\rho_{r_1,r_2,r_3,\dots,r_l}(U)$ be the corresponding $c$, 
	we see that this $Z$ respects the promises made by this $\rho$, as desired.

The set of all $\rho$ of length $w$ induces a partition of this large set into the finitely many sets 
	$G\cap\big\{Z: Z$ is good with $\{1,\dots,w\}$ for $\rho\big\}$.  
By Lemma \ref{lemma.unionlarge.n}, 
	one of the $G\cap\big\{Z: Z$ is good with $\{1,\dots,w\}$ for $\rho\big\}$ is large; 
	thus the associated string $\rho$ is an element of $T_l$.
Because $w$ was arbitrary, we have shown that $T_l$ contains a string of each length $w$.  
Thus, $T_l$ is infinite.
\end{proof}

Fix any path $p\in[T_l]$.  Then $p$ will have the form 
$p=\bigoplus_{(r_1,\dots,r_{l})\in \partitions_{l}} f_{r_1,\dots,r_{l}}$.
Intuitively, the $(r_1,\dots,r_l)^{th}$ component of $p$ will be the helper function $f_{r_1,\dots,r_l}:[\N]^{r_1}\rightarrow\{1,\dots,k\}$.
More formally, we have the following observation.

\begin{remark}
Fix $l<n$ and any collection $\mathcal{F}_l$ of length $\leq l$ compatible helper colorings.
If $p\in[T^{\mathcal{F}_l}_l]$ and if $f_{r_1,\dots,r_{l}}$ is the $(r_1,\dots,r_{l})$-th component of $p$, 
	then $\mathcal{F}_{l+1}=\mathcal{F}_l\cup\{f_{r_1,\dots,r_{l}}: (r_1,\dots,r_{l})\in S_l\}$ 
	is a collection of length $\leq l+1$ compatible helper colorings.
\end{remark}

We will define the trees recursively.  
That is, we will use the paths through $T_1,\dots,T_l$ to define the tree $T_{l+1}$. 
Because the definition of ``large'' is $\Pi^0_2$, the tree $T_l$ is $\Pi^0_2$ relative to the parameter $\mathcal{F}$.

We will use the next observation to reduce the complexity of these trees.
For convenience, we give a proof taken from the first half of Proposition 12 of \cite{RKL}.

\begin{claim}
\label{claim.Pi2-to-Sigma1}
For any $\Pi^{0,X}_2$ tree $T$, there is a $\Sigma^{0,X}_1$ tree $S$ such that $[T]=[S]$.
\end{claim}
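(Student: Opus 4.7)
The plan is to extract an $X$-computable finite-stage approximation of the defining $\Pi^{0,X}_2$ predicate of $T$ and to use it to define $S$ in a way that preserves downward closure. Fix a representation $\sigma \in T \iff (\forall n)(\exists m)\, R^X(\sigma, n, m)$ for some $X$-computable predicate $R^X$. I would then set
\[
S = \big\{\sigma \in k^{<\N} : (\exists s)(\forall \tau \preceq \sigma)(\forall n \leq |\sigma|)(\exists m \leq s)\, R^X(\tau, n, m)\big\}.
\]
The only unbounded quantifier is the leading $\exists s$; the remaining checks are bounded over finitely many $\tau$, $n$, and $m$ and applied to an $X$-decidable predicate, so $S$ is $\Sigma^{0,X}_1$. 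Downward closure of $S$ is then immediate: if $s$ witnesses $\sigma \in S$ and $\sigma' \preceq \sigma$, then every $\tau \preceq \sigma'$ also satisfies $\tau \preceq \sigma$, and $|\sigma'| \leq |\sigma|$, so the same $s$ witnesses $\sigma' \in S$.

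The heart of the proof is verifying $[S] = [T]$. For the inclusion $[T] \subseteq [S]$, I would fix $f \in [T]$ and $w \in \N$, and use the hypothesis that $T$ is a tree: since $T$ is downward closed, every $\tau \preceq f \upto w$ lies in $T$, so each pair $(\tau, n)$ with $\tau \preceq f \upto w$ and $n \leq w$ admits a finite witness $m(\tau, n)$ with $R^X(\tau, n, m(\tau, n))$. Since there are only finitely many such pairs, letting $s$ be the maximum of these witnesses shows $f \upto w \in S$. This step is where the tree hypothesis is essential: without downward closure of $T$, the predecessors of $f \upto w$ need not lie in $T$ and so need not admit any witnesses at all.

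For $[S] \subseteq [T]$, given $f \in [S]$ and $w \in \N$, I would show $f \upto w \in T$ by producing, for each $n$, some $m$ with $R^X(f \upto w, n, m)$. The idea is to look at a longer initial segment: pick any $w' \geq \max(w, n)$, let $s$ witness $f \upto w' \in S$, and apply the defining formula of $S$ with $\tau = f \upto w$ (which is $\preceq f \upto w'$) and $n' = n$ (which satisfies $n' \leq w'$) to obtain the desired $m \leq s$. The only mildly delicate design choice is the asymmetric use of quantifiers in the definition of $S$ --- universal over predecessors $\tau$ of $\sigma$ but bounded over $n$ by $|\sigma|$ --- which is exactly what makes both inclusions go through while keeping $S$ downward closed, and the construction is clearly uniform in any $\Pi^{0,X}_2$ index for $T$.
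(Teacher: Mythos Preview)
Your proof is correct and follows essentially the same approach as the paper. The paper defines $S$ via $\tau\in S \leftrightarrow (\exists \hat z)(\forall x,y\le|\tau|)(\exists z<\hat z)\,\phi(\tau\upharpoonright x,y,z)$, which is exactly your definition with $\tau\upharpoonright x$ ranging over predecessors of $\tau$; your write-up simply spells out the verifications of downward closure and $[S]=[T]$ that the paper compresses into a single chain of equalities.
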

\begin{proof}
Fix a $\Pi^0_2$ definable tree $T$.  Then there is a formula $\phi$ which is $\Delta^0_1$ such that $\tau\in T\ \leftrightarrow\ (\forall y)(\exists z)\phi(\tau,y,z)$.
	Using the $\Delta^0_1$ formula $\psi(\tau, \hat{z}) =_{def} (\forall x,y\leq |\tau|)(\exists z<\hat{z})\phi(\tau\upharpoonright x,y,z)$, 
we can define a $\Sigma^0_1$ tree $S$ by $\tau\in S$ $\leftrightarrow$ $(\exists \hat{z}) \psi(\tau,\hat{z})$.
Then $[S]$ $=$ $\{f: (\forall w)(\exists \hat{z})\psi(f\upharpoonright w,\hat{z}) \}$ 
$=$ $\{f: (\forall x)(\forall y)(\exists z)\phi(f\upharpoonright x,y,z) \}$
$=$ $[T]$. 
\end{proof}

We can now prove $\PRT^n_k$ using an arithmetical procedure.
Recall that the in this section, we have worked with an arbitrary fixed computable instance $f,\phi$ of $\PRT^n_k$.

\begin{theorem}
\label{thm.PRTn.pa}
Fix any $n,k\in\omega$ and any $\paset\gg \zero^{(n-1)}$.
Each computable instance of $\PRT^n_k$ has a $\paset$-computable solution.
\end{theorem}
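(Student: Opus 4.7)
The plan is to construct compatible helper collections $\mathcal{F}_1, \mathcal{F}_2, \ldots, \mathcal{F}_n$ by recursion on $l$, and then invoke Lemma \ref{lemma.PRTn.use-colorings}. The base case $\mathcal{F}_1 = \{f\}$ is trivially compatible. For the inductive step, given a compatible $\mathcal{F}_{l-1}$, the tree $T_l^{\mathcal{F}_{l-1}}$ is infinite by Claim \ref{claim.Tl-inf} and $\Pi^{0,\mathcal{F}_{l-1}}_2$ by inspection of Definition \ref{defn.PRTn.Tl} (since largeness is $\Pi^0_2$). Applying Claim \ref{claim.Pi2-to-Sigma1}, I would pass to an infinite computably-bounded $\Sigma^{0,\mathcal{F}_{l-1}}_1$ tree $S_l$ (hence $\mathcal{F}_{l-1}'$-computable) with $[S_l]=[T_l^{\mathcal{F}_{l-1}}]$. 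The Remark following Claim \ref{claim.Tl-inf} then guarantees that any path $\mathcal{F}_l \in [S_l]$ extends $\mathcal{F}_{l-1}$ to the next compatible collection.

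To keep the procedure within reach of $\paset$, I would show by induction on $l$ that one can choose $\mathcal{F}_l$ with $\mathcal{F}_l' \leq_T \zero^{(l)}$; the base case is immediate since $\mathcal{F}_1$ is computable. Assuming $\mathcal{F}_{l-1}' \leq_T \zero^{(l-1)}$, the tree $S_l$ is $\zero^{(l-1)}$-computable, so the Low Basis Theorem relativized to $\zero^{(l-1)}$ produces a path $\mathcal{F}_l \in [S_l]$ with $(\mathcal{F}_l \oplus \zero^{(l-1)})' \equiv_T \zero^{(l)}$, and in particular $\mathcal{F}_l' \leq_T \zero^{(l)}$. I would carry out this low-basis step for each $l < n$. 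At the final stage $l=n$, the tree $S_n$ is $\zero^{(n-1)}$-computable, and since $\paset \gg \zero^{(n-1)}$ computes paths through every infinite $\zero^{(n-1)}$-computably bounded tree, one can take $\mathcal{F}_n \in [S_n]$ with $\mathcal{F}_n \leq_T \paset$.

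At this point, for each $l < n$ we have $\mathcal{F}_l \leq_T \mathcal{F}_l' \leq_T \zero^{(l)} \leq_T \zero^{(n-1)} \leq_T \paset$, and $\mathcal{F}_n \leq_T \paset$ by construction, so $\paset$ computes the complete compatible length-$\leq n$ collection $\mathcal{F}_n$. Feeding this into Lemma \ref{lemma.PRTn.use-colorings} yields the desired $\paset$-computable packed semi-homogeneous set. The main obstacle is the complexity bookkeeping across the $n-1$ stages of the recursion: each new tree is $\Pi^0_2$ relative to the previous collection, so one must alternate Claim \ref{claim.Pi2-to-Sigma1} with a relativized Low Basis at each intermediate stage to shave off exactly one jump per stage; otherwise the iterated relativizations would push the final tree beyond the reach of $\paset$. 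Verifying that the low-basis choices preserve compatibility (which follows since lowness does not affect membership in $[S_l]$) and tracking the computably-bounded encoding needed to apply Low Basis to the $k^{[<\N]^{r_1}}$-style trees are the remaining technicalities.
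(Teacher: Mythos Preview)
Your proposal is correct and follows essentially the same approach as the paper: build the compatible collections $\mathcal{F}_1,\dots,\mathcal{F}_n$ by recursion, at each intermediate stage converting the $\Pi^{0,\mathcal{F}_{l-1}}_2$ tree $T_l$ to a $\Sigma^{0,\mathcal{F}_{l-1}}_1$ tree $S_l$ via Claim~\ref{claim.Pi2-to-Sigma1} and then applying the relativized Low Basis Theorem to keep $\mathcal{F}_l$ low over $\zero^{(l-1)}$, with the final path extracted directly by $\paset\gg\zero^{(n-1)}$ before invoking Lemma~\ref{lemma.PRTn.use-colorings}. The only cosmetic difference is that the paper phrases the inductive invariant as ``$\mathcal{F}_l$ is $low^{\emptyset^{(l-1)}}$'' rather than your ``$\mathcal{F}_l'\leq_T\zero^{(l)}$,'' and the paper writes $\mathcal{F}_l=\mathcal{F}_{l-1}\oplus p_l$ explicitly rather than overloading the symbol $\mathcal{F}_l$ for both the path and the cumulative collection; neither affects the argument.
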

\begin{proof}
Fix a computable instance $f,\phi$ for $\PRT^n_k$.
We begin by showing that 
	for any $\paset\gg\zero^{(n-1)}$, 
	there is a $\paset$-computable collection of length $\leq n$ compatible helper colorings $\mathcal{F}_n$.

During this construction, 
	we will need a uniform way to represent collections of length $\leq l$ compatible helper colorings $\mathcal{F}_l$.
Because each of these collections contains finitely many functions, we will identify $\mathcal{F}_l$ with the direct sum of its members: $\mathcal{F}_l=\bigoplus_{(r_1,\dots,r_i)\in S_{\leq l}}f_{r_1,\dots,r_i}$.

We define $\mathcal{F}_n$ by induction on $l$.  
Let $\mathcal{F}_1=f_n$.  Then $\mathcal{F}_1$ is computable because $f=f_n$ is computable.
Trivially, it follows that $\mathcal{F}_1$ is $low$ and is $\zero'$-computable. 
At stage $l+1$, we extend a collection $\mathcal{F}_l$ of length $\leq l$ compatible helper colorings to a collection $\mathcal{F}_{l+1}$ of length $\leq l+1$ compatible helper colorings.

Suppose $l$ satisfies $n>l\geq 2$, 
	and that we have chosen $\mathcal{F}_{l-1}=\bigoplus_{j\leq l-1}p_j$ 
	to be $low^{\emptyset^{(l-2)}}$, 
	where $p_{j}$ is a path through $[T_{j}]$.
Define $T_l$ using $\mathcal{F}_{l-1}$ as above, 
	and note that $T_l$ is infinite by Claim \ref{claim.Tl-inf}.
Because $T_{l}$ is $\Pi^{0,\mathcal{F}_{l-1}}_2$, 
	Claim \ref{claim.Pi2-to-Sigma1} gives a $\Sigma^{0,\mathcal{F}_{l-1}}_1$ tree $S_l$ such that\ $[T_l]=[S_l]$.  
Because $\mathcal{F}_{l-1}$ is $low^{\emptyset^{(l-2)}}$, 
	$S_l$ is $\emptyset^{(l-1)}$-computable 
	and there is a $low^{\emptyset^{(l-1)}}$ path $p_l\in[S_l]=[T_l]$.
Setting $\mathcal{F}_l=\mathcal{F}_{l-1}\oplus p_l$, 
	we see that $\mathcal{F}_l$ is $low^{\emptyset^{(l-1)}}$ and is therefore $\zero^{(l)}$-computable.

Finally, suppose $l=n$, and 
	that we have chosen $\mathcal{F}_{n-1}=\bigoplus_{j\leq n-1}p_j$ to be $low^{\emptyset^{(n-2)}}$, 
	where $p_{j}\in[T_{j}]$.
Define $T_n$ using $\mathcal{F}_{n-1}$ as above, 
	and note that $T_n$ is infinite by Claim \ref{claim.Tl-inf}.
Because $T_n$ is $\Pi^{0,\mathcal{F}_{n-1}}_2$, Claim \ref{claim.Pi2-to-Sigma1} 
	gives a $\Sigma^{0,\mathcal{F}_{n-1}}_1$ tree $S_n$ such that\ $[T_n]=[S_n]$. 
Because $\mathcal{F}_{n-1}$ is $low^{\emptyset^{(n-2)}}$, 
	the tree $S_n$ is $\emptyset^{(n-1)}$-computable, 
	and therefore $\paset$ computes some path $p_n\in[S_n]=[T_n]$.  

Set $\mathcal{F}_n=\mathcal{F}_{n-1}\oplus p_n$.  
Then, by definition of each $T_l$,  $\{f_{r_1,\dots,r_l} : (r_1,\dots,r_l)\in\partitions$ and $f_{r_1,\dots,r_l}$ is the $(r_1,\dots,r_l)^{th}$ component of $\mathcal{F}_n\}$ is a $P$-computable collection of length $\leq n$ compatible helper colorings.
Applying \ref{lemma.PRTn.use-colorings}, 
	we obtain a $\paset$-computable set $A$ that is packed for $\phi$ and semi-homogeneous for $f$. 
\end{proof}

\begin{corollary}
\label{cor.PRTn.arith}
Fix $n\in\omega$.  Each computable instance of $\PRT^n_k$ has a $\Delta^0_{n+1}$ definable solution.
\end{corollary}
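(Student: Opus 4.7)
The plan is to derive this corollary essentially for free from Theorem~\ref{thm.PRTn.pa}, by exhibiting a specific $\Delta^0_{n+1}$ set that serves as the required $\paset\gg\zero^{(n-1)}$. The natural candidate is $\emptyset^{(n)}$ itself: it is $\Sigma^0_n$ (in fact $\Sigma^0_n$-complete) and therefore $\Delta^0_{n+1}$, so any set Turing computable from $\emptyset^{(n)}$ automatically has a $\Delta^0_{n+1}$ definition.

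The only thing to verify, then, is the relativized fact that $\emptyset^{(n)}\gg\emptyset^{(n-1)}$. This is the standard observation, used already in the corollary following Theorem~\ref{thm.PRT2.pa} in the case $n=2$: given any infinite $\emptyset^{(n-1)}$-computable binary tree $T$, the leftmost infinite path through $T$ can be computed by iteratively asking, at each node $\sigma\in T$, whether the subtree above $\sigma\cat 0$ is infinite. That question is $\Pi^{0,\emptyset^{(n-1)}}_1$, hence answerable from $(\emptyset^{(n-1)})' = \emptyset^{(n)}$, so $\emptyset^{(n)}$ computes a path. One extends this from binary to $k$-branching (or computably bounded) trees in the routine way.

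With these two ingredients in hand, I would simply apply Theorem~\ref{thm.PRTn.pa} with $\paset = \emptyset^{(n)}$: each computable instance of $\PRT^n_k$ has a $\paset$-computable solution $A$, so $A\leq_T \emptyset^{(n)}$, which makes $A$ (in fact, $\Sigma^0_n$, hence) $\Delta^0_{n+1}$ definable.

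There is no real obstacle here; this is essentially bookkeeping on the complexity bound produced by Theorem~\ref{thm.PRTn.pa}. The only tiny subtlety is ensuring the degree $\emptyset^{(n)}$ really is PA relative to $\emptyset^{(n-1)}$ in the sense the paper uses (computing paths through infinite $\emptyset^{(n-1)}$-computable binary trees), which, as noted, is immediate from the standard fact that $X'\gg X$ for every $X$.
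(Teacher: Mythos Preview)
Your proposal is correct and mirrors the paper's proof exactly: apply Theorem~\ref{thm.PRTn.pa} with $\paset=\emptyset^{(n)}$, using that $\emptyset^{(n)}\gg\emptyset^{(n-1)}$ and that $\emptyset^{(n)}$ is $\Delta^0_{n+1}$. One small slip: your parenthetical ``(in fact, $\Sigma^0_n$, hence)'' is not right---a set Turing-reducible to $\emptyset^{(n)}$ is $\Delta^0_{n+1}$ but need not be $\Sigma^0_n$ (e.g., the complement of $\emptyset^{(n)}$); this does not affect the argument, since $\Delta^0_{n+1}$ is all that is claimed.
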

\begin{proof}
Recall that $\emptyset^{(n)}$ is $\Delta^0_{n+1}$ and that $\zero^{(n)}\gg\zero^{(n-1)}$.
\end{proof}

Formalizing the above construction in second order arithmetic, we obtain a reverse mathematics analog.

\begin{corollary}
\label{cor.PRTn.ACA}
$\ACA$ implies $\PRT^n_k$ over $\RCA$.
\end{corollary}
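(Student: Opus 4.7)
The plan is to follow the template established by Corollary \ref{cor.PRT2.ACA}: formalize the construction of Theorem \ref{thm.PRTn.pa} inside $\ACA$ using $\emptyset^{(n)}$ as the PA-over-$\emptyset^{(n-1)}$ witness. Since $\PRT^n_k$ is a scheme (one statement per standard $n$), I treat $n$ as fixed and carry out an induction externally on $n$ in the metatheory, while all internal induction will be on $l\leq n$.

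First I would verify that $\ACA$ proves, for each standard $j\leq n$, the existence of $\emptyset^{(j)}$ as a set. This follows by a finite (external) iteration of arithmetical comprehension, since the Turing jump is definable by an arithmetical formula and $\ACA$ yields arithmetical comprehension. A standard formalization then gives $\emptyset^{(n)}\gg\emptyset^{(n-1)}$ inside $\ACA$, supplying the parameter $\paset$ needed for the proof of Theorem \ref{thm.PRTn.pa}.

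Next I would formalize the stagewise construction of $\mathcal{F}_1,\dots,\mathcal{F}_n$ inside $\ACA$. At stage $l$, given $\mathcal{F}_{l-1}$ (which by induction is $low^{\emptyset^{(l-2)}}$ and hence arithmetical in $\emptyset^{(l-2)}$), arithmetical comprehension forms the tree $T_l^{\mathcal{F}_{l-1}}$ from Definition \ref{defn.PRTn.Tl} (its membership predicate being $\Pi^{0,\mathcal{F}_{l-1}}_2$, and hence arithmetical). Claim \ref{claim.Tl-inf} shows $T_l$ is infinite, and Claim \ref{claim.Pi2-to-Sigma1} produces a $\Sigma^{0,\mathcal{F}_{l-1}}_1$ tree with the same paths. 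A formalized low basis theorem inside $\ACA$ then yields a path $p_l$ that is $low^{\emptyset^{(l-1)}}$, and setting $\mathcal{F}_l=\mathcal{F}_{l-1}\oplus p_l$ maintains the inductive hypothesis. Since the induction runs over the standard index $l\leq n$, only a bounded (and in fact arithmetical) induction is needed. At stage $n$, the path $p_n$ exists because $\emptyset^{(n)}$ computes a path through the $\Sigma^{0,\mathcal{F}_{n-1}}_1$ tree $S_n$. Finally, Lemma \ref{lemma.PRTn.use-colorings}, formalized in $\ACA$ using the helper colorings $\mathcal{F}_n$, produces the desired packed semi-homogeneous set $A$.

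The main obstacle will be the bookkeeping needed to confirm that each step of the construction and each verification can be expressed with arithmetical formulas and proved using arithmetical induction. In particular, one must check that (i) the iterated-jump construction and the formalization of the low basis theorem work uniformly in the parameters, (ii) the compatibility and goodness conditions used in Section \ref{sect.tools-PRTn} (which are $\Pi^0_2$ modulo their parameters) admit the same combinatorial proofs in $\ACA$ that were given in the computability-theoretic argument, and (iii) Lemmas \ref{lemma.unionlarge.n} and \ref{lemma.large-n.colorings} and Claim \ref{claim.Tl-inf} go through with arithmetical induction. As in the $n=2$ case, this last verification is routine but tedious, and I would leave it to the reader.
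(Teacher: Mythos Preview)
Your proposal is correct and follows essentially the same approach as the paper's proof: use that $\ACA$ provides $\emptyset^{(n)}\gg\emptyset^{(n-1)}$, formalize the set constructions of Theorem \ref{thm.PRTn.pa} in $\ACA$, and leave the verification that arithmetical induction suffices to the reader. The paper's version is considerably terser, but the underlying argument is the same.
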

\begin{proof}
Fix any $n\in\omega,k\in\N$.
Because $\ACA$ implies that $\zero^{(n)}$ exists and because and $\zero^{(n)}\gg\zero^{(n-1)}$, the set constructions used to prove Theorem \ref{thm.PRTn.pa} can be performed in $\ACA$.  
We leave it to the reader to confirm that the verifications can be performed using induction for arithmetical formulas.
\end{proof}

\section{Lower bounds and reversals}
\label{sect.reversals}

In this section, we give lower bounds on the strength of $\PRT^n_k$.
We first prove that $\PRT^n$ implies $\RT^n$ over $\RCA$. 
Modifying this argument, we show that for each $n$, there is a computable instance of $\PRT^n_{2^{n-1}+1}$ with no $\Sigma^0_n$ solution.
The key tool is both proofs is Theorem \ref{thm.EG.sharpcolors}, which is essentially Theorem 2.3 of \cite{ramseytype}.

The first step in showing that $\PRT^n$ implies $\RT^n$ over $\RCA$ is to state and prove a version of Theorem \ref{thm.EG.sharpcolors} appropriate for reverse mathematics.

\begin{defn} 
We say that $\phi:\N\rightarrow\N$ is an \emph{order function} if $\phi$ is total, non-decreasing, and has unbounded range.
\end{defn}

The most natural choice for $\phi$ in $\PRT$ is an order function.

\begin{remark}[$\RCA$] 
Fix $n\in\omega,k\in\N$.  
For each $w$, let $\phi_{max}(w)$ be the largest $m$ such that $w\rightarrow (m)^n_{k+1}$. 
Then $\phi_{max}$ is a total, $\Delta^0_1$ definable order function.
\end{remark} 
\begin{proof} 
Clearly $\phi_{max}$ is total, $\Delta^0_1$ definable, and non-decreasing.  
Finite Ramsey's theorem, which is provable in $\RCA$, implies that $\phi_{max}$ has unbounded range. 
\end{proof}

Recall that for exponent $n$, we write $\partitions$ for the set of all partition types $(r_1,\dots,r_l)$ such that $r_1+\dots+r_l=n$.  
We will write $\allones$ to refer to the partition type where $r_i=1$ for each $i$.  That is, $\allones=(1,\dots,1)$. 

Recall also that for each increasing sequence $\{w_i\}$ and each set $X\in[\N]^n$, 
	we say that $(r_1,\dots,r_l)$ is the partition type of $X$ with respect to $\{(w_i,w_{i+1}]\}$ 
	if there are $j_1<\dots<j_{l}$ such that $|X\cap(w_{j_i},w_{j_i+1}]| = r_i$ for each $i\leq l$.

We now prove our lemma, which is an adaptation of the proof of Theorem \ref{thm.EG.sharpcolors}.
\begin{lemma}[$\RCA$]
\label{lemma.PRTn-sharp}
Fix $n\in\omega$. Let $\phi:\mathbb{N}\rightarrow\mathbb{N}$ be any order function. 
There is a coloring $g:[\mathbb{N}]^n\rightarrow \partitions$
	and a strictly increasing function $i\mapsto w_i$ such that\
\begin{itemize}
	\item 
	$g(X)$ is the partition type of $X\in[\mathbb{N}]^n$ with respect to $\{(w_i,w_{i+1}]\}$, and 
	\item 
	for any infinite $A\subseteq \mathbb{N}$, 
		either $A$ is sparse for $\phi$ or $\{g(X):X\in[A]^n\} = \partitions$.
\end{itemize}
\end{lemma}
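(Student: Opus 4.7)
The plan is to choose $(w_i)$ growing so quickly that $\phi(w_i)$ outpaces the possible accumulation of any "thinly spread" set $A$, forcing any non-sparse $A$ to have arbitrarily many intervals $(w_i, w_{i+1}]$ containing at least $n$ elements of $A$. Once that is established, every partition type can be exhibited by selecting elements from enough such intervals.

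First, I would set $w_0 := 0$ and, recursively, take $w_{i+1}$ to be the least $w > w_i$ with $\phi(w) \geq n(i+1)$. Such $w$ exists because $\phi$ is an order function (hence unbounded), so $i \mapsto w_i$ is a strictly increasing $\Delta^0_1$-definable iteration of a total $\Delta^0_1$ function and exists in $\RCA$ by $\Sigma^0_1$ induction, exactly as in Theorem \ref{thm.RT1k->PRT1k}. The coloring $g$ is then forced on us: define $g(X)$ to be the partition type of $X$ with respect to $\{(w_i,w_{i+1}]\}$, which immediately gives the first bullet.

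The central step is the counting claim: if $A$ is not sparse for $\phi$, then $|A \cap (w_i, w_{i+1}]| \geq n$ for infinitely many $i$. I would argue the contrapositive. Suppose $i_0$ is such that $|A \cap (w_i, w_{i+1}]| \leq n-1$ for every $i \geq i_0$. Decomposing $\{1,\dots,w_j\}$ into $\{1,\dots,w_{i_0}\}$ together with the intervals $(w_i, w_{i+1}]$ for $i_0 \leq i \leq j-1$ yields
\[
|A \cap \{1,\dots,w_j\}| \;\leq\; w_{i_0} + (n-1)(j-i_0).
\]
For any integer $w$ with $w_{j-1} < w \leq w_j$, monotonicity of $\phi$ together with the defining condition on $w_{j-1}$ gives $\phi(w) \geq \phi(w_{j-1}) \geq n(j-1)$. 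The linear inequality $n(j-1) > w_{i_0} + (n-1)(j-i_0)$ holds for all $j$ beyond a fixed threshold (slope $n$ beats slope $n-1$), so $|A\cap\{1,\dots,w\}| \leq |A\cap\{1,\dots,w_j\}| < \phi(w)$ for all but finitely many $w$, witnessing that $A$ is sparse.

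Given the counting claim, the second bullet is straightforward: for each $(r_1,\dots,r_l) \in \partitions$, pick indices $i_1 < \dots < i_l$ with $|A \cap (w_{i_j}, w_{i_j+1}]| \geq n \geq r_j$ for each $j$, and take $r_j$ elements of $A$ from the $i_j$-th interval; the union is an $X \in [A]^n$ with $g(X) = (r_1,\dots,r_l)$. The main obstacle is the sparseness calculation — keeping the telescoping sum and the monotonicity bound on $\phi$ aligned — but this reduces to a clean comparison of two linear functions of $j$ with different slopes, and formalizes in $\RCA$ using only $\Sigma^0_1$ induction and $\Delta^0_1$ comprehension (to form the set of indices $i$ witnessing $|A \cap (w_i, w_{i+1}]| \geq n$).
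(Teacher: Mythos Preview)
Your proof is correct and follows essentially the same approach as the paper: both define $w_i$ so that $\phi(w_i)\geq n\cdot i$, and both show that if only finitely many intervals $(w_i,w_{i+1}]$ meet $A$ in at least $n$ points then the bound $|A\cap\{1,\dots,w_j\}|\leq w_{i_0}+(n-1)(j-i_0)$ eventually falls below $\phi(w)\geq n(j-1)$, forcing sparseness. Your direct slope comparison is a slightly cleaner way to organize the sparseness calculation than the paper's explicit crossover index $i_0=\hat{i}+(\hat{m}-n\hat{i})$, but the content is identical.
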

\begin{proof}
We define $w_i$ by induction on $i$.  Let $w_1=1$.  
For $i>1$, define $w_i$ to be the least element of $\{w>w_{i-1} : \phi(w)\geq n\cdot i\}$. 
This set is nonempty because $\phi$ has unbounded range, and has a least element by $\Delta^0_1$ induction.
We have defined $i\mapsto w_i$ by iterating a total $\Delta^0_1$ function, so the map is total by $\Sigma^0_1$ induction (and Proposition 6.5 of \cite{combprinciples}).

For each $X\in[\N]^n$, define $g(X)$ to be the partition type of $X$ with respect to the sequence $\{(w_i,w_{i+1}]\}$.  
Then $g$ and $i\mapsto w_i$ have $\Delta^0_1$ definitions, so exist by $\Delta^0_1$ comprehension.

We must verify that $g$ assigns all colors to any packed set.
Fix any set $A=\{a_1<a_2<\dots\}$.  
Note that if there are $n$ values of $i$ such that $|A\cap(w_i,w_{i+1}]|\geq n$, 
	then $\{g(X) : X\in[A]^n\}=\partitions$, and $g$ assigns all colors to $A$.

Suppose that $A$ is not given all colors by $g$. 
Then there is some $\hat{i}$ such that $(\forall i\geq\hat{i})[|A\cap(w_i,w_{i+1}]|<n]$.
We will show that $A$ is sparse by defining $i_0\geq\hat{i}$ 
	such that for all $i\geq i_0$, $|A\cap\{1,\dots,w_{i+1}\}|<n\cdot i$. 
If $\hat{m}:=|A\cap\{1,\dots,w_{\hat{i}}\}| < n\cdot \hat{i}$, set $i_0=\hat{i}$.
Otherwise, if $\hat{m}\geq n\cdot\hat{i}$, 
	there are $\hat{m}-n\cdot\hat{i}$ more elements in $A\cap\{1,\dots,w_{\hat{i}}\}$ than desired.
Set $i_0=\hat{i}+(\hat{m}-n\cdot\hat{i})$.
For each $i$ between $\hat{i}$ and $i_0$, $|A\cap\{1,\dots,w_i\}|$ increases by at most $n-1$, while $n\cdot i$ increases by $n$. 
Therefore, once $i$ is at least $i_0$, we have  $|A\cap\{1,\dots,w_i\}|<n\cdot i$, as desired.

Because $A=\{a_1<a_2<\dots\}$ and $\{w_1<w_2<\dots\}$ are infinite and $\Delta^0_1$, 
	there is some $j_0$ such that for each $j\geq j_0$, there is an $i\geq i_0$ such that\ $a_j\in (w_i,w_{i+1}]$.  

Fix any $j\geq j_0$.  
Then $n\cdot i>|A\cap\{1,\dots,w_{i+1}\}|$ by definition of $j_0$. 
Recall that we defined $w_i$ so that $\phi(w_i)\geq n\cdot i$.
Putting it all together, because $\phi$ is non-decreasing and $a_j\geq w_i$ we see that $\phi(a_j)\geq\phi(w_i)\geq n\cdot i > |A\cap\{1,\dots,w_{i+1}\}|\geq |A\cap\{1,\dots,a_j\}|$.
That is, $\phi(a_j)>|A\cap\{1,\dots,a_j\}|$.

Because $\phi(a_j)>|A\cap\{1,\dots,a_j\}|$ for all but finitely many $j$, 
	and because $\phi$ is non-decreasing, 
	it follows that $A$ is sparse for $\phi$.
It follows that every set is either sparse or given all colors by $g$.
\end{proof}

For each packed set $X$, the function $g$ assigns all $2^{n-1}$ colors in $\partitions$ to $[X]^n$.
Given a coloring $f:[\N]^n\rightarrow\{1,\dots,k\}$, 
	we will adapt $g$ slightly to obtain a function 
	$h:[\mathbb{N}]^n\rightarrow (\partitions\backslash \{\allones\}) \sqcup \{1,\dots,k\}$.  
Applying $\PRT^n_{2^{n-1}-1+k}$ to $h$ and $\phi_{max}$ 
	will produce a packed set $A$ s.t.\ every $Z\in[A]^n$ with partition type $\allones=(1,\dots,1)$ 
	is given a single color by $f$.  
We will then refine $A$ to obtain an infinite $f$-homogeneous set $H\subset A$ by putting a single element of each block into $H$.

Without loss of generality, we will always assume that $\partitions\cap\{1,\dots,k\}=\emptyset$.

\begin{claim}[$\RCA$] 
\label{claim.reversals.h} 
Fix $f:[\N]^n\rightarrow\{1,\dots,k\}$ and $\phi$ an order function as in $\PRT^n_k$, and let $g$ be the function obtained in Lemma \ref{lemma.PRTn-sharp}.
For each $Z\in[\N]^n$, we define 
$$h(Z)=	\begin{cases}  
			f(Z)	& \text{ if } g(Z)=\allones,\\
			g(Z)	& \text{ otherwise.}
		\end{cases}
$$
Let $A$ be semi-homogeneous for $h$ and packed for $\phi$.  
Then there is a unique $\hat{c}\in\{1\dots,k\}$ s.t.\ $f(X)=\hat{c}$ for each $X\in [A]^n$ with $g(X)= \allones$.
\end{claim}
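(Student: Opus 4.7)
The plan is to combine the two properties in the hypothesis: that $A$ is packed (so by Lemma \ref{lemma.PRTn-sharp} the coloring $g$ uses every partition type on $[A]^n$) and that $A$ is semi-homogeneous for $h$ (so $h$ uses at most $2^{n-1}$ colors on $[A]^n$). A counting argument will then pin down a single $f$-color used on the $\allones$-type tuples.

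First I would invoke Lemma \ref{lemma.PRTn-sharp} applied to $A$. Since $A$ is packed for $\phi$, the lemma guarantees that $\{g(X) : X\in[A]^n\} = \partitions$, so every partition type in $\partitions$ is realized on $[A]^n$. In particular, for each $(r_1,\dots,r_l)\in \partitions\setminus\{\allones\}$ there exists some $X\in[A]^n$ with $g(X)=(r_1,\dots,r_l)$, and for such $X$ the definition of $h$ gives $h(X)=g(X)=(r_1,\dots,r_l)$. Therefore $h$ uses all $2^{n-1}-1$ colors in $\partitions\setminus\{\allones\}$ on $[A]^n$.

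Next, I would use semi-homogeneity: $|\{h(X):X\in[A]^n\}|\leq 2^{n-1}$. Since $2^{n-1}-1$ colors from $\partitions\setminus\{\allones\}$ are already forced to appear, at most one color from the disjoint set $\{1,\dots,k\}$ can also appear in the range of $h$ restricted to $[A]^n$. On the other hand, by Lemma \ref{lemma.PRTn-sharp} there is at least one $X\in[A]^n$ with $g(X)=\allones$, and for such $X$, $h(X)=f(X)\in\{1,\dots,k\}$; so at least one color from $\{1,\dots,k\}$ does appear. Combining the two bounds, exactly one color $\hat c\in\{1,\dots,k\}$ appears in $\{h(X):X\in[A]^n\}$, and any $X\in[A]^n$ with $g(X)=\allones$ must satisfy $f(X)=h(X)=\hat c$. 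Uniqueness is immediate from the fact that $\hat c$ was determined as the single element of $\{1,\dots,k\}\cap\{h(X):X\in[A]^n\}$.

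The only mild obstacle is making sure the bookkeeping is right: one must keep $\partitions\setminus\{\allones\}$ and $\{1,\dots,k\}$ disjoint (which the excerpt arranges by convention just before the claim), and one must verify that the $\allones$ partition type really is realized on $[A]^n$ (immediate from packedness via Lemma \ref{lemma.PRTn-sharp}). Neither requires more than a line, so the proof should be short and entirely combinatorial.
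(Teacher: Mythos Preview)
Your proposal is correct and follows essentially the same approach as the paper's proof: use packedness and Lemma~\ref{lemma.PRTn-sharp} to force all $2^{n-1}-1$ non-$\allones$ partition types into the range of $h$ on $[A]^n$, then use semi-homogeneity and the disjointness convention to conclude that exactly one color from $\{1,\dots,k\}$ can remain, which must be the common $f$-value on the $\allones$-type tuples.
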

\begin{proof} 
	We first examine the colors assigned to $A$ by the coloring $g:[\N]^n\rightarrow \partitions$.
	Because $A$ is packed for $\phi$, it follows that $g$ assigns all possible colors to subsets of $A$.
	For each $c\in \partitions\backslash\{\allones\}$, select some $X_c\in[A]^n$ such that $g(X_c)=c$. 

	We now examine the colors assigned to $A$ by the helper coloring $h:[\N]^n\rightarrow \{1,\dots,k\} \sqcup \partitions\backslash\{\allones\}$.
	For each $c\in \partitions\backslash\{\allones\}$, our definition of $h$ implies that $h(X_c)=g(X_c)=c$.
	Recall that $|\partitions\backslash\{\allones\}|=2^{n-1}-1$.
	Also by our definition of $h$, we know that $h(X)\in\{1,\dots,k\}$ for each $X$ with $g(X)=\allones$.  
	Because $A$ is semi-homogeneous for $h$, and because $\partitions\cap\{1,\dots,k\}=\emptyset$, it follows that there is a unique color $\hat{c}\in\{1,\dots,k\}$ such that $h(X)=\hat{c}$ for each $X\in [A]^n$ with $g(X)=\allones$.  

	Examining our definition of $h$ one last time, we see that whenever $g(X)=\allones$, we have $h(X)=f(X)$.  Consequently, we have shown that there is a unique color $\hat{c}\in\{1,\dots,k\}$ such that for each $X\in [A]^n$ with $g(X)=\allones$, $f(X)=\hat{c}$.  
\end{proof}

\begin{theorem}[$\RCA$]
\label{thm:PRTprovesRT}
$\PRT^n_{2^{n-1}-1+k}$ implies $\RT^n_k$ for each $n\in\omega$ and $k\in\N$.
\end{theorem}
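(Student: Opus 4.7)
The plan is to plug a given $f:[\N]^n\to\{1,\dots,k\}$ into the machinery already assembled in Lemma \ref{lemma.PRTn-sharp} and Claim \ref{claim.reversals.h}. First I would define the order function $\phi_{max}(w)$ to be the largest $m$ such that $w\to(m)^n_{2^{n-1}+k}$. By finite Ramsey's theorem (provable in $\RCA$) this is total, non-decreasing, and unbounded, so it is an order function definable by $\Delta^0_1$ comprehension. Applying Lemma \ref{lemma.PRTn-sharp} to $\phi_{max}$ yields the partition-type coloring $g:[\N]^n\to\partitions$ and the increasing sequence $\{w_i\}$ with the property that every set packed for $\phi_{max}$ receives all $2^{n-1}$ partition-type colors.

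Next, I would form the combined coloring $h:[\N]^n\to(\partitions\setminus\{\allones\})\sqcup\{1,\dots,k\}$ as in Claim \ref{claim.reversals.h}. This coloring uses exactly $(2^{n-1}-1)+k$ colors, and by construction $\phi_{max}$ satisfies $w\to(\phi_{max}(w))^n_{(2^{n-1}-1+k)+1}$ because $(2^{n-1}-1+k)+1=2^{n-1}+k$. Invoking $\PRT^n_{2^{n-1}-1+k}$ on the pair $h,\phi_{max}$ produces a set $A$ that is packed for $\phi_{max}$ and semi-homogeneous for $h$. Claim \ref{claim.reversals.h} then delivers a single color $\hat{c}\in\{1,\dots,k\}$ such that $f(X)=\hat{c}$ for every $X\in[A]^n$ with $g(X)=\allones$.

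Finally, I would extract an infinite $f$-homogeneous $H\subseteq A$ by picking one element of $A$ from each nonempty intersection $A\cap(w_i,w_{i+1}]$, say the least. Because $\phi_{max}$ is unbounded and $A$ is packed for $\phi_{max}$, $A$ cannot be concentrated in finitely many blocks, so $H$ is infinite. Every $n$-element subset of $H$ has partition type $\allones$ relative to $\{(w_i,w_{i+1}]\}$, so $g$ assigns it $\allones$ and by the choice of $\hat{c}$ we get $f$-homogeneity of $H$ with color $\hat{c}$.

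The main obstacle is not combinatorial but formal: carrying the argument out in $\RCA$. I expect the delicate points to be (i) defining $\phi_{max}$ and $h$ via $\Delta^0_1$ comprehension using only finite Ramsey's theorem together with the parameters $f$, $g$, and $\{w_i\}$ from Lemma \ref{lemma.PRTn-sharp}, and (ii) producing the infinite selector $H$ inside $\RCA$ by iterating a total $\Delta^0_1$ function that finds, given $i$, the next block with $A\cap(w_j,w_{j+1}]\neq\emptyset$ for $j\geq i$ --- its totality reduces to the packedness of $A$ and requires only $\Sigma^0_1$ induction, in the same spirit as the construction of $i\mapsto w_i$ in the proofs of Theorem \ref{thm.RT1k->PRT1k} and Lemma \ref{lemma.PRTn-sharp}.
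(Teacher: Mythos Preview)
Your proposal is correct and follows essentially the same route as the paper: build $\phi_{\max}$, apply Lemma~\ref{lemma.PRTn-sharp} to get $g$ and $\{w_i\}$, form $h$ as in Claim~\ref{claim.reversals.h}, invoke $\PRT^n_{2^{n-1}-1+k}$, and then thin $A$ to one point per interval. Your choice of subscript $2^{n-1}+k$ in the definition of $\phi_{\max}$ is exactly what is needed for the hypothesis of $\PRT^n_{2^{n-1}-1+k}$, and the paper's argument that $H$ is infinite is the slightly simpler observation that $A$ is infinite while each $(w_i,w_{i+1}]$ is finite, so your packedness-based justification is not needed (though it is also valid).
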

\begin{proof}

Suppose $\PRT^n_{2^{n-1}-1+k}$ holds. 
Given a function $f:[\mathbb{N}]^n\rightarrow\{1,\dots,k\}$, we must produce an infinite set $H$ homogeneous for $f$.  

Recall that $\phi_{\max}(w)=\max m[w\rightarrow(m)^n_{k+1}]$ is a $\Delta^0_1$ definable order function.
Applying Lemma \ref{lemma.PRTn-sharp} with $\phi=\phi_{max}$, 
	we obtain $g:[\mathbb{N}]^n\rightarrow \partitions$ s.t.\ $g(X)$ is the partition type of $X\in[\N]^n$.  
Define the coloring $h:[\mathbb{N}]^n\rightarrow (\partitions\backslash \{\allones\}) \sqcup \{1,\dots,k\}$ as in Claim \ref{claim.reversals.h}.
Note that $h$ is a coloring of $[\N]^n$ into $2^{n-1}-1+k$ colors.  
Furthermore, $h$ has a $\Delta^0_1$ definition, so exists by $\Delta^0_1$ comprehension.
By $\PRT^n_{2^{n-1}-1+k}$, there is a set $A$ that is semi-homogeneous for $h$ and packed for $\phi_{max}$.  

Let $H=\bigcup_{i\in \mathbb{N}} \{\min(A\cap (w_i,w_{i+1}])\}$.
For each $i$, $A\cap(w_i,w_{i+1}]$ is $\Delta^0_1$ with parameters, so it has a least element.
Because $A$ is infinite and each interval is finite, $H$ is infinite.  
Clearly, $H$ is $\Delta^0_1$ definable from $A$, so exists by $\Delta^0_1$ comprehension.

Suppose $X\in [H]^n$.  
By definition of $H$, at most $1$ element of $X$ is in any interval $(w_i,w_{i+1}]$.  
Then $X$ has partition type $\allones$, so $g(X)=\allones$.
Let $\hat{c}$ be the unique color in Claim \ref{claim.reversals.h}.  
Because $H\subseteq A$, it follows that $f(X)=\hat{c}$.  

In summary, $H$ is infinite and $f$-homogeneous with color $\hat{c}$, as desired.
\end{proof}

We conclude this section by showing that for $n\geq 2$ and $k>2^{n-1}$, 
	there is a computable instance of $\PRT^n_k$ with no $\Sigma^0_n$ solution.
We use the following result.

\begin{theorem}[Jockusch \cite{J72}] \label{thm.RT.no-sigma-n}
For each $n\geq2$, there exists a computable coloring $f:[\N]^n\rightarrow\{1,2\}$ such that no $\Sigma^0_n$ set is homogeneous for $f$. 
\end{theorem}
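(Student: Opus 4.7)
The plan is to prove Jockusch's theorem by induction on $n \geq 2$, combining an effective priority construction at the base case with a lifting argument for the inductive step. Throughout I use the standard identification of $\Sigma^0_n$ sets with sets of the form $W_e^{\emptyset^{(n-1)}}$ and the associated effective approximations.

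For the base case $n=2$, I would build a computable $f\colon[\N]^2\to\{1,2\}$ by a tree-of-strategies argument diagonalizing against the $\Sigma^0_2$ sets $A_e = W_e^{\emptyset'}$. Fix a $\emptyset'$-computable approximation $A_e^s$ with $\liminf_s A_e^s = A_e$. The requirement $R_e$ demands that if $A_e$ is infinite then $A_e$ is not $f$-homogeneous. To meet $R_e$, I would wait for a stage at which two disjoint pairs $\{x_1,y_1\},\{x_2,y_2\}$ appear persistently in the approximation $A_e^s$ and then commit $f(\{x_1,y_1\}) = 1$ and $f(\{x_2,y_2\}) = 2$, protecting these commitments from interference by lower-priority requirements. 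Nodes of the tree guess the true behavior of the $\emptyset'$-approximation on the finitely many queries relevant to $R_e$; along the true path every requirement is met.

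For the inductive step, given a computable $g\colon[\N]^{n-1}\to\{1,2\}$ with no $\Sigma^0_{n-1}$ homogeneous set, I would construct $f\colon[\N]^n\to\{1,2\}$ by an encoding of the shape $f(x_1,\dots,x_n) = g(x_1,\dots,x_{n-1})$ perturbed by a diagonalizing bit depending on $x_n$. The goal is that any infinite $\Sigma^0_n$ set $H$ which is $f$-homogeneous yields, after taking limits in the last coordinate, an infinite $\Sigma^0_{n-1}$ set that is $g$-homogeneous, contradicting the inductive hypothesis. The encoding must be arranged so that the projection does not raise complexity beyond $\Sigma^0_{n-1}$.

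The main obstacle lies in the base case. Since the values of $f$ cannot be revised once assigned, we must certify that our intended witnesses actually lie in $A_e$ before committing colors, and this certification is itself $\Sigma^0_2$. Managing it via a tree of strategies that guesses $\emptyset'$-stabilization, and verifying that the true path satisfies every requirement, is the technical heart. The inductive step is conceptually cleaner but still requires care in ensuring that the encoding preserves the relevant definability classes so that the contradiction with the inductive hypothesis closes cleanly.
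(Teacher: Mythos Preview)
The paper does not prove this theorem; it is quoted from Jockusch~\cite{J72} and used as a black box in the proof of Theorem~\ref{thm.PRTn.no-sigma}. So there is nothing in the paper to compare against, but your sketch diverges from Jockusch's argument and has genuine gaps in both the base case and the inductive step.

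In the base case, the obstacle you flag is fatal to the approach rather than merely technical. To keep $f$ computable you must commit $f(\{a,b\})$ by (essentially) stage $\max(a,b)$, yet membership of a candidate witness in $A_e=W_e^{\emptyset'}$ depends on unboundedly many $\emptyset'$-bits, so it cannot be settled by a node guessing ``the finitely many queries relevant to $R_e$.'' By the time any node on your tree can be confident that $\{x_1,y_1\},\{x_2,y_2\}\subseteq A_e$, the values $f(\{x_1,y_1\})$ and $f(\{x_2,y_2\})$ were fixed long ago, and you had no control over them; you also cannot reserve fresh witnesses, since you do not control which numbers enter $A_e$. Jockusch's $n=2$ construction avoids this entirely: it is an explicit coloring defined directly from a computable approximation to $\emptyset'$, not a priority argument, and works by forcing a global growth condition on every infinite homogeneous set rather than by diagonalizing against each $A_e$ with handpicked witnesses.

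Your inductive step also misses the key idea. If $f(x_1,\dots,x_n)$ is essentially $g(x_1,\dots,x_{n-1})$, then an $f$-homogeneous $H$ is already $g$-homogeneous; a $\Sigma^0_n$ such $H$ does not contradict the hypothesis that $g$ has no $\Sigma^0_{n-1}$ homogeneous set, and ``taking limits in the last coordinate'' does nothing to lower the complexity of $H$ itself. The missing ingredient is relativization: Jockusch relativizes the $(n{-}1)$-case to $\emptyset'$ to obtain a $\emptyset'$-computable $g'\colon[\N]^{n-1}\to\{1,2\}$ with no infinite $\Sigma^{0,\emptyset'}_{n-1}=\Sigma^0_n$ homogeneous set, takes a computable $h$ with $\lim_s h(\vec{x},s)=g'(\vec{x})$ via the Limit Lemma, and sets $f(x_1,\dots,x_n)=h(x_1,\dots,x_{n-1},x_n)$. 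Any infinite $f$-homogeneous $H$ is then $g'$-homogeneous (let $x_n\to\infty$ through $H$), and a $\Sigma^0_n$ such $H$ contradicts the choice of $g'$.
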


We can now prove an arithmetical lower bound on $\PRT^n_k$.

\begin{theorem}
\label{thm.PRTn.no-sigma}
If $n\geq 2$ and $k>2^{n-1}$, there is a computable instance of $\PRT^n_k$ such that\ no $\Sigma^0_n$ definable set is both packed for $\phi$ and semi-homogeneous for $f$.
\end{theorem}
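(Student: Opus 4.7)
The plan is to combine the strategy of Theorem \ref{thm:PRTprovesRT} with Jockusch's Theorem \ref{thm.RT.no-sigma-n}, but to replace the ``take the minimum'' thinning step by a ``first-come, first-served'' enumeration that preserves $\Sigma^0_n$ complexity.

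First, fix a computable $f_0:[\N]^n\to\{1,2\}$ from Theorem \ref{thm.RT.no-sigma-n} with no infinite $\Sigma^0_n$ homogeneous set, let $\phi_{\max}(w)=\max\{m:w\to(m)^n_{k+1}\}$ be the computable order function attached to the chosen $k$, and apply Lemma \ref{lemma.PRTn-sharp} to obtain a computable coloring $g:[\N]^n\to\partitions$ and a computable strictly increasing sequence $\{w_i\}$ such that $g(X)$ is the partition type of $X$ with respect to $\{(w_i,w_{i+1}]\}$. Define $h:[\N]^n\to\{1,2\}\sqcup(\partitions\setminus\{\allones\})$ exactly as in Claim \ref{claim.reversals.h}: $h(Z)=f_0(Z)$ when $g(Z)=\allones$, and $h(Z)=g(Z)$ otherwise. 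Since $|\{1,2\}\sqcup(\partitions\setminus\{\allones\})|=2^{n-1}+1\leq k$, we may view $h$ as a map into $\{1,\dots,k\}$ via any fixed injection, so $(h,\phi_{\max})$ is a computable instance of $\PRT^n_k$.

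Now suppose for contradiction that some $\Sigma^0_n$ set $A$ is packed for $\phi_{\max}$ and semi-homogeneous for $h$. The argument of Claim \ref{claim.reversals.h} (with $f_0$ playing the role of $f$) yields a unique $\hat{c}\in\{1,2\}$ such that $f_0(X)=\hat{c}$ for every $X\in[A]^n$ with $g(X)=\allones$, i.e., every $X\in[A]^n$ meeting each interval $(w_i,w_{i+1}]$ in at most one point. All that remains is to extract an infinite $\Sigma^0_n$ subset $H\subseteq A$ that meets each interval at most once, for such an $H$ would be $\Sigma^0_n$ and $f_0$-homogeneous with color $\hat{c}$, contradicting the choice of $f_0$.

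The main obstacle is precisely this extraction: the natural definition $H=\bigcup_i\{\min(A\cap(w_i,w_{i+1}])\}$ used in Theorem \ref{thm:PRTprovesRT} is only $\Delta^0_{n+1}$ in general, since ``$x$ is minimal in its interval'' requires a bounded universal quantifier over the $\Pi^0_n$ predicate ``$y\notin A$.'' I would instead fix a $\emptyset^{(n-1)}$-computable enumeration $a_0,a_1,\dots$ of $A$ (available since $A$ is $\Sigma^0_n$) and build $H$ by enumerating $a_s$ into $H$ exactly when no previously enumerated $a_t$ (with $t<s$) lies in the same interval $(w_i,w_{i+1}]$ as $a_s$. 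This procedure is $\emptyset^{(n-1)}$-computable, so $H$ is c.e.\ relative to $\emptyset^{(n-1)}$, hence $\Sigma^0_n$. Because $\liminf_w\phi_{\max}(w)=\infty$ and $A$ is packed for $\phi_{\max}$, $A$ is infinite and must meet infinitely many intervals, so $H$ is infinite; by construction $H$ hits each interval at most once, completing the contradiction and hence the proof.
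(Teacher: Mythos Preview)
Your proof is correct and follows the same strategy as the paper: build the computable instance $(h,\phi_{\max})$ by overlaying Jockusch's $f_0$ on the $\allones$ partition type of $g$, and from any $\Sigma^0_n$ packed semi-homogeneous $A$ extract a $\Sigma^0_n$ set $H\subseteq A$ meeting each interval $(w_i,w_{i+1}]$ at most once. The paper carries out the extraction by writing down an explicit $\Sigma^0_n$ formula for $H$ (putting $x$ into $H$ when it has a witness $y$ such that no $z<x$ in the same interval has a witness $\le y$) rather than by describing a $\emptyset^{(n-1)}$-recursive first-come-first-served enumeration, but the underlying idea is the same.
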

\begin{proof}
Suppose toward a contradiction that for each appropriate computable coloring and $\phi$, there is a $\Sigma^0_n$ definable set which is both packed and semi-homogeneous.

Let $f:[\N]^n\rightarrow\{1,\dots,k\}$ be a coloring with no $\Sigma^0_n$ definable homogeneous set, which exists by Theorem \ref{thm.RT.no-sigma-n}.  
Note that the function $g$ from Lemma \ref{lemma.PRTn-sharp} with $\phi=\phi_{max}$ is computable.
Define $h$ as in Claim \ref{claim.reversals.h}.
Then $h$ is computable because it is computable from $f$, $g$, and $\phi_{max}$.

Suppose that $A$ is a $\Sigma^0_n$ set that is packed for $\phi_{max}$ and semi-homogeneous for $h$.  
Then there is a $\Delta^0_n$ formula $\theta$ such that $x\in A \iff (\exists y)[\theta(x,y)]$.
Let 
	$$x\in H \iff (\exists y)(\exists i)[\theta(x,y) \land (x\in(w_i,w_{i+1}]) \land (\forall z\in(w_i,x))(\forall t\leq y)[\neg\theta(z,t)]].$$  
Note that this is a $\Sigma^0_n$ definition for $H$.

Because $A$ is infinite, and because each interval $(w_i,w_{i+1}]$ is finite, we see that $H$ is infinite. 
Because each element of $[H]^n$ has partition type $\allones$ with respect to the sequence $\{(w_i,w_{i+1}]\}$, it follows that $H$ is $g$-homogeneous with color $\allones$.  
By Claim \ref{claim.reversals.h}, there is a unique $\hat{c}$ such that $f(X)=\hat{c}$ for each $X\in[H]^n$.
In short, $H$ is an infinite $f$ homogeneous set that is $\Sigma^0_n$, contradicting our choice of $f$.
\end{proof}

\section{Summary of results}
\label{sect.summary}

\begin{defn}
Given $n,k\in\N$, a \emph{computable instance of $\PRT^n_k$} is computable coloring $f:[\N]^n\rightarrow\{1,\dots,k\}$ and a computable $\phi:\N\rightarrow\N$ such that\ $w\rightarrow(\phi(w))^n_{k+1}$ for all $w$.
We say that $A\subseteq\N$ is a \emph{solution} to a computable instance of $\PRT^n_k$ if $A$ is packed for the appropriate $\phi$, and semi-homogeneous for the appropriate $f$.
\end{defn}

From the perspective of computability theory, we have shown:
\begin{theorem} Fix $n,k\in\N$.  
\label{thm.comp-summary}
\begin{enumerate}
	\item \label{thm.comp-summary.pa}
		For any $\paset\gg\zero^{(n-1)}$, each computable instance of $\PRT^n_k$ 
		has a $\paset$-computable solution.  Hence, there is always a $\Delta^0_{n+1}$ solution.
	\item \label{thm.comp-summary.sigma} 
		If $n\geq 2$, there is a computable instance of $\PRT^n_{2^{n-1}+1}$ with no $\Sigma^0_n$ solution. 
	\item  \label{thm.comp-summary.low}
		Any computable instance of $\PRT^2_k$ has a $low_2$ solution.
	\item \label{thm.comp-summary.comp}
		Each computable instance of $\PRT^1_k$ has a computable solution.
\end{enumerate}
\end{theorem}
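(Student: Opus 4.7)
My plan is to assemble this summary directly from the corresponding results already proved in the paper; each of the four items is essentially a restatement of an earlier theorem or corollary, so the ``proof'' is primarily bookkeeping.

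For item (\ref{thm.comp-summary.pa}), I would simply invoke Theorem \ref{thm.PRTn.pa}, which produces a $\paset$-computable solution to every computable instance of $\PRT^n_k$ whenever $\paset \gg \zero^{(n-1)}$. The $\Delta^0_{n+1}$ conclusion then follows as in Corollary \ref{cor.PRTn.arith}, using that $\emptyset^{(n)}$ is $\Delta^0_{n+1}$ and satisfies $\emptyset^{(n)} \gg \emptyset^{(n-1)}$. For item (\ref{thm.comp-summary.sigma}), I would apply Theorem \ref{thm.PRTn.no-sigma} with $k = 2^{n-1}+1$, observing that $2^{n-1}+1 > 2^{n-1}$ meets the hypothesis of that theorem; no further work is needed.

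For item (\ref{thm.comp-summary.low}), I would cite Theorem \ref{thm.PRT2.low2} verbatim. Item (\ref{thm.comp-summary.comp}) follows from Corollary \ref{cor.PRT1.comp}, but here one minor check is warranted: Corollary \ref{cor.PRT1.comp} is stated with the side hypothesis $\phi(w) \leq \lceil w/(k+1) \rceil$, whereas the notion ``computable instance of $\PRT^1_k$'' only requires $w \rightarrow (\phi(w))^1_{k+1}$. However, by the ordinary pigeonhole principle, the largest $\phi(w)$ consistent with $w \rightarrow (\phi(w))^1_{k+1}$ is exactly $\lceil w/(k+1) \rceil$, so the side condition is automatically satisfied, and the corollary applies.

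The only real subtlety, and thus the only item worth flagging as a potential obstacle, is reconciling this $\phi$-hypothesis between the summary theorem and Corollary \ref{cor.PRT1.comp}; once that is noted, every part is a clean reference to results established earlier. I would organize the write-up as four short paragraphs, one per item, so that the reader can see at a glance which earlier result supplies each clause of the summary.
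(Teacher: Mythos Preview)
Your proposal is correct and matches the paper's own proof, which simply cites Theorem~\ref{thm.PRTn.pa} with Corollary~\ref{cor.PRTn.arith}, Theorem~\ref{thm.PRTn.no-sigma}, Theorem~\ref{thm.PRT2.low2}, and Corollary~\ref{cor.PRT1.comp} for the four items respectively. Your additional observation reconciling the $\phi$-hypothesis in Corollary~\ref{cor.PRT1.comp} with the definition of ``computable instance'' is a nice detail that the paper's proof leaves implicit.
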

\begin{proof}
\eqref{thm.comp-summary.pa} is Theorem \ref{thm.PRTn.pa} and Corollary \ref{cor.PRTn.arith}.  
\eqref{thm.comp-summary.sigma} follows from Theorem \ref{thm.PRTn.no-sigma}.
\eqref{thm.comp-summary.low} is Theorem \ref{thm.PRT2.low2}, and 
\eqref{thm.comp-summary.comp} is Corollary \ref{cor.PRT1.comp}.
\end{proof}

From the perspective of reverse mathematics, we have shown:
\begin{theorem} Over $\RCA$,
\label{thm.rm-summary}
\begin{enumerate}
	\item \label{thm.rm-summary.eq}
		$\PRT^n_k$ is equivalent to $\RT^n_k$ for $n\in\omega$ s.t.\ $n\neq 2$ 
		and $k\in\N$ s.t.\ $k>2^{n-1}$, 
	\item \label{thm.rm-summary.imp}
		$\PRT^2_{k+1}$ implies $\RT^2_k$ for each $k\in\N$, 
	\item \label{thm.rm-summary.notimp}
		$\PRT^2_k$ does not imply $\ACA$ for any $k\in\omega$, and
	\item \label{thm.rm-summary.bsig2}
		$(\forall k)\PRT^1_k$ is equivalent to B$\Sigma^0_2$.
\end{enumerate}
\end{theorem}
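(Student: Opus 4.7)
The plan is to observe that all four items follow from results already established in the paper, together with a small amount of standard reverse mathematics.

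I would first dispatch the two items that are essentially restatements. Item~(\ref{thm.rm-summary.imp}) is Theorem~\ref{thm:PRTprovesRT} specialized to $n = 2$, where the parameter $2^{n-1} - 1 + k$ collapses to $k + 1$. Item~(\ref{thm.rm-summary.notimp}) is exactly Corollary~\ref{cor.PRT2.notimp}: the $\omega$-model of $\PRT^2_k$ built there contains only $low_2$ sets, so in particular cannot contain $\emptyset''$ and therefore fails to model $\ACA$.

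For item~(\ref{thm.rm-summary.eq}), I would establish each direction separately. The forward direction $\PRT^n_k \Rightarrow \RT^n_k$ invokes Theorem~\ref{thm:PRTprovesRT} to obtain $\RT^n_{k - 2^{n-1} + 1}$; the hypothesis $k > 2^{n-1}$ ensures $k - 2^{n-1} + 1 \geq 2$, and the standard color-merging argument shows that $\RT^n_j$ for $j \geq 2$ are pairwise equivalent over $\RCA$. For the reverse direction $\RT^n_k \Rightarrow \PRT^n_k$, I would split on $n$: the case $n = 1$ is Theorem~\ref{thm.RT1k->PRT1k}, while for $n \geq 3$ I would combine the classical equivalence $\RT^n_k \Leftrightarrow \ACA$ over $\RCA$ (see~\cite{simpson}) with Corollary~\ref{cor.PRTn.ACA}. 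The case $n = 2$ is excluded because this reverse argument passes through $\ACA$, which is strictly stronger than $\PRT^2_k$ by item~(\ref{thm.rm-summary.notimp}).

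Finally, for item~(\ref{thm.rm-summary.bsig2}), I would combine the $n = 1$ instance of item~(\ref{thm.rm-summary.eq}) (which yields $\PRT^1_k \Leftrightarrow \RT^1_k$ uniformly in $k$) with the Hirst--Simpson theorem that $(\forall k)\RT^1_k$ is equivalent to $B\Sigma^0_2$ over $\RCA$. The main obstacle across the whole argument is purely bookkeeping: confirming that the parameter shift in Theorem~\ref{thm:PRTprovesRT} aligns with the ``$k > 2^{n-1}$'' hypothesis, and that the uniformity in $k$ needed for item~(\ref{thm.rm-summary.bsig2}) is genuinely present in the proofs of Theorems~\ref{thm.RT1k->PRT1k} and~\ref{thm:PRTprovesRT}.
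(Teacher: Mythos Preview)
Your proposal is correct and follows essentially the same route as the paper's proof: items~(\ref{thm.rm-summary.imp}) and~(\ref{thm.rm-summary.notimp}) are direct citations of Theorem~\ref{thm:PRTprovesRT} and Corollary~\ref{cor.PRT2.notimp}; item~(\ref{thm.rm-summary.eq}) splits on $n=1$ versus $n\geq 3$ exactly as the paper does, using Theorems~\ref{thm.RT1k->PRT1k} and~\ref{thm:PRTprovesRT} and Corollary~\ref{cor.PRTn.ACA}; and item~(\ref{thm.rm-summary.bsig2}) reduces to Hirst's $(\forall k)\RT^1_k\Leftrightarrow B\Sigma^0_2$. The only cosmetic difference is that the paper phrases the $n\geq 3$ forward direction via the equivalence $\RT^n_2\Leftrightarrow\ACA\Leftrightarrow\RT^n_k$ rather than your ``color-merging'' language, but this is the same fact.
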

\begin{proof}
To prove \eqref{thm.rm-summary.eq}, we work over $\RCA$.
First, consider $n=1$.  
For any $k\in\mathbb{N}$, $\RT^1_k$ implies $\PRT^1_k$ by Theorem \ref{thm.RT1k->PRT1k}, and $\PRT^1_{k}$ implies $\RT^1_k$ by Theorem \ref{thm:PRTprovesRT}.

Next, consider any $n\in\omega$ with $n\geq 3$ and any $k\in\N$ with $k>2^{n-1}$.
Because $n\geq3$, $\RT^n_k$ is equivalent to both $\RT^n_2$ and $\ACA$.
By Theorem \ref{thm:PRTprovesRT}, $\PRT^n_k$ implies $\RT^n_2$, so $\PRT^n_k$ also implies $\RT^n_k$.  
By Corollary \ref{cor.PRTn.ACA},  $\ACA$ implies $\PRT^n_k$, so $\RT^n_k$ also implies $\PRT^n_k$.

\eqref{thm.rm-summary.imp} is Theorem \ref{thm:PRTprovesRT} for exponent $n=2$, and 
\eqref{thm.rm-summary.notimp} follows from Corollary \ref{cor.PRT2.notimp}.
For \eqref{thm.rm-summary.bsig2}, recall that $(\forall k)\RT^1_k$ is equivalent to B$\Sigma^0_2$ by a result of \cite{hirst-thesis}.
\end{proof}

\section{Further questions}
\label{sect.questions}

In this paper, we have shown that packed Ramsey's theorem is close in strength to Ramsey's theorem.  
We close with a number of questions concerning the exact strength of this packed Ramsey's theorem.

\begin{question}
Does each computable instance of $\PRT^n_k$ have a $\Pi^0_{n}$ solution?
\end{question}

\begin{remark}
In many of our proofs of $\PRT^n_k$, we have echoed proofs of $\RT^n_k$, replacing the construction of a `homogeneous set of numbers' with the construction of `a sequence of blocks whose induced colorings are homogeneous.'  
In each of these adaptions, care was required when selecting the finite blocks, since they must be colored appropriately with a large set of numbers. 
\par
In \cite{J72}, Jockusch proved that each computable instance of $\RT^n_k$ has a $\Pi^0_n$ solution.  
Unfortunately, adapting Jockusch's proof using these methods does not appear to produce $\Pi^0_2$ solutions to instances of $\PRT^2_k$.
The main obstacle can be expressed this way: the construction must somehow avoid selecting blocks that contain two numbers $x_1$ and $x_2$ such that each $x_i$ is given color $i$ with all large enough numbers. 
\end{remark}

There are a number of open questions about the precise reverse mathematical strength of $\PRT^2_k$. 
The following question is of particular interest.

\begin{question}
Does $\RT^2_2$ imply $\PRT^2_3$ over $\RCA$?
\end{question}

Absent an equivalence of $\PRT^2_k$ and $\RT^2_2$, can upper bounds of $\RT^2_2$ be extended to $\PRT^2_k$? 
For example, Cholak, Jockusch, and Slaman showed in \cite{CJS} that $\RT^2_2$ is $\Pi^1_1$ conservative over $\ISigma^0_2$.

\begin{question}
Is $\PRT^2_k$ $\Pi^1_1$ conservative over $\ISigma^0_2$? 
Is it conservative over $\RT^2_2$?
\end{question}

Recall that $\WKL$ asserts that each infinite binary tree has an infinite path.
In \cite{liu}, Liu showed that $\RT^2_2$ does not imply $\WKL$.

\begin{question}
Does $\PRT^2_3$ imply $\WKL$ over $\RCA$?
\end{question}

We close with a natural combinatorial question.  
Recall that the restriction on the rate of growth of $\phi$ was used primarily 
	to prove that $[\N]^{n-1}$ is large.
Can weaken the restriction on $\phi$, either by modifying the notion of largeness or by giving an alternate construction?

\begin{question}
\label{question.improvephi}
Fix $n,k$.
Are there functions $\phi$ s.t.\ $w\not\rightarrow(\phi(w))^n_{k+1}$ for infinitely many $w$ such that 
for all $f:[\N]^n\rightarrow\{1,\dots,k\}$, there is a set $A$ which is packed for $\phi$ and semi-homogeneous for $f$? 

Conversely, for each $\phi$ s.t.\ $w\not\rightarrow(\phi(w))^n_{k+1}$ infinitely often, is there is a coloring $f:[\N]^n\rightarrow\{1,\dots,k\}$ s.t.\ every set semi-homogeneous for $f$ is also sparse for $\phi$?
\end{question}

Although exact values for the fastest growing $\phi$ 
	such that $w\rightarrow(\phi(w))^n_{k+1}$ are not known when $n\geq 2$, 
	some upper and lower bounds are known.
For $n\in\N$, $log_{n-1}$ denotes the $n-1$-iterated logarithm.
Theorem 26.6 of \cite{comb-set-thy} says that for each $n,k\geq 2$, 
	there is a $\hat{c}_{n,k+1}$ such that 
	the function $\phi(w)= \hat{c}_{n,k+1}\cdot\log_{n-1}w$ 
	satisfies the conditions of $\PRT^n_k$ for all large enough $w$.  
When $n,k\geq 3$, Theorem 26.3 of \cite{comb-set-thy} says that this lower bound is sharp up to a multiplicative constant.
More precisely, fix any $n,k\geq 3$, and consider the greatest function $\phi$ as in $\PRT^n_k$.  
Then Theorem 26.3 of \cite{comb-set-thy} says that there is a $d_n$ s.t.\ $\phi(w) < d_n\cdot\log_{n-1}w$ for all large enough $w$.  

This gives a sharpening of question \ref{question.improvephi}.
\begin{question}
Fix $n\geq 3$.  
Is there a function $\phi$ with $\phi(w)>d_n\cdot \log_{n-1}(w)$ 
	such that for all $f:[\N]^n\rightarrow\{1,\dots,k\}$, 
	there is a set $A$ which is packed for $\phi$ and semi-homogeneous for $f$? 
\end{question}

\bibliographystyle{amsplain}
\bibliography{../../../BibTeX/flood-bibliography}

\providecommand{\bysame}{\leavevmode\hbox to3em{\hrulefill}\thinspace}
\providecommand{\MR}{\relax\ifhmode\unskip\space\fi MR }
\providecommand{\MRhref}[2]{%
  \href{http://www.ams.org/mathscinet-getitem?mr=#1}{#2}
}
\providecommand{\href}[2]{#2}
\begin{thebibliography}{10}

\bibitem{CJS}
Peter~A. Cholak, Carl~G. Jockusch, Jr., and Theodore~A. Slaman, \emph{On the
  strength of {R}amsey's theorem for pairs}, J. Symbolic Logic \textbf{66}
  (2001), no.~1, 1--55.

\bibitem{CJS-correction}
\bysame, \emph{Corrigendum to: ``{O}n the strength of {R}amsey's theorem for
  pairs''}, J. Symbolic Logic \textbf{74} (2009), no.~4, 1438--1439.

\bibitem{ramseytype}
Paul Erd{\H{o}}s and Fred Galvin, \emph{Some {R}amsey-type theorems}, Discrete
  Math. \textbf{87} (1991), no.~3, 261--269.

\bibitem{comb-set-thy}
Paul Erd{\H{o}}s, Andr{\'a}s Hajnal, Attila M{\'a}t{\'e}, and Richard Rado,
  \emph{Combinatorial set theory: partition relations for cardinals}, Studies
  in Logic and the Foundations of Mathematics, vol. 106, North-Holland
  Publishing Co., Amsterdam, 1984.

\bibitem{flood-thesis}
Stephen Flood, \emph{Paths, trees, and the computational strength of some
  {R}amsey-type theorems}, ProQuest LLC, Ann Arbor, MI, 2012, Thesis
  (Ph.D.)--University of Notre Dame.

\bibitem{RKL}
\bysame, \emph{Reverse mathematics and a {R}amsey-type {K}{\"o}nig's lemma}, J.
  Symbolic Logic \textbf{77} (2012), no.~4, 1272--1280.

\bibitem{combprinciples}
Denis~R. Hirschfeldt and Richard~A. Shore, \emph{Combinatorial principles
  weaker than {R}amsey's theorem for pairs}, J. Symbolic Logic \textbf{72}
  (2007), no.~1, 171--206.

\bibitem{hirst-thesis}
Jeffry~Lynn Hirst, \emph{Combinatorics in {S}ubsystems of {S}econd {O}rder
  {A}rithmetic}, ProQuest LLC, Ann Arbor, MI, 1987, Thesis (Ph.D.)--The
  Pennsylvania State University.

\bibitem{J72}
Carl~G. Jockusch, Jr., \emph{{R}amsey's theorem and recursion theory}, J.
  Symbolic Logic \textbf{37} (1972), 268--280.

\bibitem{liu}
Jiayi Liu, \emph{${RT}^2_2$ does not imply ${WKL}$}, J. Symbolic Logic
  \textbf{77} (2012), no.~2, 609--620.

\bibitem{simpson}
Stephen~G. Simpson, \emph{Subsystems of second order arithmetic}, second ed.,
  Perspectives in Logic, Cambridge University Press, Cambridge, 2009.

\bibitem{soare}
Robert~I. Soare, \emph{Recursively enumerable sets and degrees}, Perspectives
  in Mathematical Logic, Springer-Verlag, Berlin, 1987.

\bibitem{wang-egt}
Wei Wang, \emph{Notes}, personal communication, 2011.

\end{thebibliography}
\nocite{CJS-correction}

\end{document}